\newcommand{\figout}[1]
{#1}
\newtheorem{lemma}{Lemma}
\newtheorem{theorem}{Theorem}
\newtheorem{corollary}{Corollary}
\newtheorem{proposition}{Proposition}
\newtheorem{remark}{Remark}
\theoremstyle{definition}
\def\a{\alpha}
\def\b{\beta}
\def\d{\delta}
\def\f{\varphi}
\def\p{\psi}
\def\s{\sigma}
\newcommand{\D}{\Delta}
\def\lam{\lambda}
\def\g{\gamma}
\def\eps{\varepsilon}
\def\R{{\mathbb R}}
\def\then{\quad\Rightarrow\quad}
\def\vh{\vec h}
\def\vH{\vec H}
\def\aen{\mbox{\ae}}
\def\A{\mathcal{A}}
\def\B{\mathcal{B}}
\def\tN{\widetilde{N}}
\def\tlam{\widetilde{\lam}}
\def\tq{\widetilde{q}}
\def\ts{\widetilde{\sigma}}
\def\hq{\widehat{q}}
\def\dh{\dot{h}}
\def\dq{\dot{q}}
\def\bt{\bar{t}}
\def\bq{\bar{q}}
\def\bs{\bar{\sigma}}
\def\sL{sub-Lo\-rent\-zi\-an } 
\def\SL{Sub-Lo\-rent\-zi\-an }
\newcommand{\spann}{\operatorname{span}\nolimits}
\newcommand{\Exp}{\operatorname{Exp}\nolimits}
\newcommand{\Lip}{\operatorname{Lip}\nolimits}
\newcommand{\sgn}{\operatorname{sgn}\nolimits}
\newcommand{\VEC}{\operatorname{Vec}\nolimits}
\newcommand{\intt}{\operatorname{int}\nolimits}
\newcommand{\cl}{\operatorname{cl}\nolimits}
\newcommand{\sn}{\operatorname{sn}\nolimits}
\newcommand{\cn}{\operatorname{cn}\nolimits}
\newcommand{\dn}{\operatorname{dn}\nolimits}
\newcommand{\am}{\operatorname{am}\nolimits}
\newcommand{\E}{\operatorname{E}\nolimits}
\newcommand{\cut}{\operatorname{cut}\nolimits}
\def\tt{\mathbf{t}}
\def\tcut{t_{\cut}}
\def\Mc{\overset{\circ}{M}}
\newcommand{\eq}[1]{$(\protect\ref{#1})$}
\newcommand{\be}[1]{\begin{equation}\label{#1}}
\newcommand{\ee}{\end{equation}}
\newcommand{\pder}[2]{\frac{\partial \, #1}{\partial \, #2} }
\newcommand{\map}[3]{#1 \, : \, #2 \to #3}
\newcommand{\restr}[2]{\left. #1 \right|_{#2}}
\newcommand{\onefiglabelsizen}[4]
{
\begin{figure}[htbp]
\begin{center}
\includegraphics[height=#4cm]{#1}
\\
\parbox[t]{0.7\textwidth}{\caption{#2}\label{#3}}
\end{center}
\end{figure}
}
\newcommand{\twofiglabelsizeh}[8]
{
\begin{figure}[htbp]
\includegraphics[height=#4cm]{#1}
\hfill
\includegraphics[height=#8cm]{#5}
\\
\parbox[t]{0.4\textwidth}{\caption{#2}\label{#3}}
\hfill
\parbox[t]{0.4\textwidth}{\caption{#6}\label{#7}}
\end{figure}
}
\author{Yu. L. Sachkov\\
Ailamazyan Program Systems Institute of RAS\\
RUDN University\\
e-mail: yusachkov@gmail.com}
\title{Flat sub-Lorentzian structures on    Martinet distribution
\footnote{
Work in Section 3 supported by Russian Scientific Foundation, grant 22-11-00140, https://rscf.ru/project/22-11-00140/.
Work in Section 4 supported by   the Theoretical Physics and Mathematics Advancement Foundation 
"BASIS", 
grant 23-7-1-16-1.
}
}
\begin{document}

\maketitle

\begin{abstract}
Two flat sub-Lorentzian problems on the  Martinet  distribution are studied. For the first one, the attainable set has a nontrivial intersection with the Martinet plane, but for the second one it does not. Attainable sets, optimal trajectories, sub-Lorentzian distances and spheres are described.
\end{abstract}

\medskip\noindent
Keywords: \SL geometry, geometric control theory, Martinet distribution, \sL length maximizers, \sL distance, \sL sphere


\tableofcontents


\section{Introduction}
Sub-Riemannian geometry studies manifolds $M$ in which the distance between points $q_0, q_1 \in M$ is the infimum of the lengths of all curves tangent to a given distribution and connecting $q_0$ to $q_1$ \cite{mont_book, ABB}. In more detail, for the distribution $\D \subset TM$ in each space $\D_q \subset T_qM$ the scalar product $g_q$ is given, and the length of the curve $q(t)$, $t \in [0, t_1]$, tangent $\D$, is measured as in Riemannian geometry: $l(q(\cdot)) = \int_0^{t_1} \sqrt{g(\dot q(t))} dt$.

If in each space $\D_q \subset T_qM$ we define a non-degenerate quadratic form $g_q$ of index 1, then a sub-Lorentzian structure $(\D, g)$ will be defined on the manifold $M$.
Here the natural problem is to find the longest relative to $g$ curve connecting given points.
Sub-Lorentzian geometry strives to build a theory similar to the rich theory of sub-Riemannian geometry, and is at the beginning of its development. The foundations of sub-Lorentzian geometry were laid in the works of M.~Grochowski~\cite{groch2,groch3,groch9,groch11,groch4,groch6}, see also \cite{chang_mar_vas,grong_vas,groch_med_war, kor_mar}.

Just as in sub-Riemannian geometry, the simplest sub-Lorentzian problem arises on the Heisenberg group; it has been fully studied \cite{groch4,sl_heis}.
The next most important model of sub-Riemannian geometry after the Heisenberg group arises on the  Martinet distribution \cite{mont_book, ABCK, ABB, UMN2}.

The purpose of this work is to consider two flat sub-Lorentzian problems on the  Martinet distribution: to describe the optimal synthesis, distance and spheres. In the first problem, the future cone has a non-trivial intersection with the tangent space to the Martinet surface; in the second case this intersection is trivial. Accordingly, in the first case the sub-Lorentzian geometry is much more complicated, see Conclusion.

The structure of this work is as follows. In Section \ref{sec:SL} we recall the basic facts of \sL geometry required in the sequel. 

The main Sections \ref{sec:P1} and  \ref{sec:P2} are devoted respectively to the first and the second flat \sL problems on the Martinet  distribution; they have identical structure as follows. First we find an invariant set (a candidate attainable set) via the geometric statement of Pontryagin maximum principle. Then we describe explicitly abnormal and normal extremal trajectories; normal trajectories are parametrized by the \sL exponential mapping. We prove diffeomorphic properties of the exponential mapping via Hadamard's global diffeomorphism theorem. On this basis we show that the above-mentioned  invariant set is indeed the attainable set, and prove a theorem on existence of optimal trajectories. After that we study optimality of extremal trajectories, which yields an optimal synthesis. We complete our study by describing main properties of \sL distance and sphere.

In the concluding Sec. \ref{sec:conclude} we discuss the results obtained for two problems.

\section{\SL geometry}\label{sec:SL}
A \sL structure on a smooth manifold $M$ is a pair $(\D, g)$ consisting of a vector distribution $\D \subset TM$ and a Lorentzian metric $g$ on $\D$, i.e., a nondegenerate quadratic form $g$ of index~1.  
Let us recall some basic definitions of \sL geometry.  
A vector $v \in T_qM$, $q \in M$, is called horizontal if $v \in \D_q$. A horizontal vector $v$ is called:
\begin{itemize}
\item
timelike if $g(v)<0$,
\item
spacelike if $g(v)>0$ or $v = 0$,
\item
lightlike if $g(v)=0$ and $v \neq 0$,
\item
nonspacelike if $g(v)\leq 0$.
\end{itemize}
A Lipschitzian curve in $M$ is called timelike if it has timelike velocity vector a.e.; spacelike, lightlike and nonspacelike curves are defined similarly.

A time orientation $X$ is an arbitrary timelike vector field in $M$. A nonspacelike vector $v \in \D_q$ is future directed if $g(v, X(q))<0$, and past directed if $g(v, X(q))>0$. 

A future directed timelike curve $q(t)$, $t \in [0, t_1]$, is called arclength paramet\-ri\-zed if $g(\dot q(t), \dot q(t)) \equiv - 1$. Any future directed timelike curve can be parametrized by arclength, similarly to the arclength parametrization of a horizontal curve in sub-Riemannian geometry.

The length of a nonspacelike curve $\g \in \Lip([0, t_1], M)$ is 
$$
l(\g) = \int_0^{t_1} |g(\dot \g, \dot \g)|^{1/2} dt.
$$

For points $q_1, q_2 \in M$ denote by $\Omega_{q_1q_2}$ the set of all future directed nonspacelike curves in $M$ that connect $q_1$ to $q_2$. In the case $\Omega_{q_1q_2} \neq \emptyset$ denote the \sL distance from the point $q_1$ to the point $q_2$ as
\be{d}
d(q_1, q_2) = \sup \{l(\g) \mid \g \in \Omega_{q_1q_2}\}.
\ee
And if $\Omega_{q_1q_2} = \emptyset$ then $d(q_1, q_2) := 0$.
A future directed nonspacelike curve $\g$ is called a \sL length maximizer if it realizes the supremum in \eq{d} between its endpoints $\g(0) = q_1$, $\g(t_1) = q_2$.

The causal future of a point $q_0 \in M$ is the set $J^+(q_0)$ of points $q_1 \in M$ for which there exists a future directed nonspacelike curve $\g$ that connects $q_0$ and $q_1$. 

Let $q_0 \in M$, $q_1 \in J^+(q_0)$. The search for \sL length maximizers that connect $q_0$ with~$q_1$ reduces to the search for future directed nonspacelike curves $\g$ that solve the problem
\be{lmax}
l(\g) \to \max, \qquad \g(0) = q_0, \quad \g(t_1) = q_1.
\ee

A set of vector fields $X_1, \dots, X_k \in \VEC(M)$ is an orthonormal frame for a \sL structure $(\D, g)$ if for all $q \in M$
\begin{align*}
&\D_q = \spann(X_1(q), \dots, X_k(q)),\\
&g_q(X_1, X_1) = -1, \qquad g_q(X_i, X_i) = 1, \quad i = 2, \dots, k, \\
&g_q(X_i, X_j) = 0, \quad i \neq j. 
\end{align*}
Assume that time orientation is defined by a timelike vector field $X \in \VEC(M)$ for which $g(X, X_1) < 0$ (e.g., $X = X_1$). Then the   \sL problem for the \sL structure with the orthonormal frame $X_1, \dots, X_k$ is stated as the following optimal control problem:
\begin{align*}
&\dot q = \sum_{i=1}^k u_i X_i(q), \qquad q \in M, \\
&u \in U = \left\{(u_1, \dots, u_k) \in \R^k \mid u_1 \geq \sqrt{ u_2^2 + \dots + u_k^2}\right\},\\
&q(0) = q_0, \qquad q(t_1) = q_1, \\
&l(q(\cdot)) = \int_0^{t_1} \sqrt{u_1^2 - u_2^2 -  \dots - u_k^2} \, dt  \to \max.
\end{align*}

\begin{remark}
The \sL length is preserved under monotone Lipschitzian time reparametrizations $t(s)$, $s \in [0, s_1]$. Thus if $q(t)$, $t \in [0, t_1]$, is a \sL length maximizer, then so is any its reparametrization $q(t(s))$, $s \in [0, s_1]$. 

In this paper we choose primarily the following parametrization of trajectories: the arclength parametrization ($u_1^2 - u_2^2 - \cdots - u_k^2 \equiv 1$) for timelike trajectories, and the parametrization with $u_1(t) \equiv 1$ for future directed lightlike trajectories. 
\end{remark}

\section{The first problem}\label{sec:P1} 
Let $M = \R^3_{x, y, z}$, $X_1 = \pder{}{x}$, $X_2 = \pder{}{y} + \frac{x^2}{2} \pder{}{z}$. The distribution $\D = \spann(X_1, X_2)$ is called the  Martinet  distribution \cite{mont_book, ABCK, ABB, UMN2}. 
The plane $\Pi = \{x=0\}$ is called the Martinet surface.
The distribution~$\D$ has growth vector $(2,3)$ outside of $\Pi$, and growth vector $(2, 2, 3)$ on $\Pi$. This distribution is called flat since the Lie algebra generated by the vector fields $X_1$, $X_2$ is a Carnot algebra (Engel algebra), the nonzero Lie brackets of these vector fields are: $[X_1, X_2] = x X_3$, $[X_1, x X_3] = X_3 := \pder{}{z}$.

In this section we study a \sL problem in which the interior of the future cone intersects nontrivially with the tangent space to the Martinet plane $\Pi$. 

\subsection{Problem statement}

The first flat sub-Lorentzian problem on the Martinet  distribution is stated as the following optimal control problem \cite{notes, intro}:
\begin{align}
&\dot q = u_1 X_1 + u_2 X_2, \qquad q \in M, \label{pr11} \\
&u= (u_1, u_2) \in U_1 = \{u_2 \geq |u_1|\}, \label{pr12}\\
&q(0) = q_0 = (0, 0, 0), \qquad q(t_1) = q_1, \label{pr13}\\
&l = \int_0^{t_1} \sqrt{u_2^2-u_1^2}dt \to \max, \label{pr14}
\end{align}
see Fig. \ref{fig:U1}.

\figout{
\onefiglabelsizen
{UP2}{The set $U_1$}{fig:U1}{4}
}

\subsection{Invariant set}\label{subsec:invar_set}
In this subsection we compute an invariant set $\B_1$ of system \eq{pr11}, \eq{pr12}. Later, in Th. \ref{th:att_set}, we prove that $\B_1$ is the attainable set $\A_1$ of system \eq{pr11}, \eq{pr12} from the point $q_0$ for arbitrary nonnegative time (the causal future of the point $q_0$). 

By the geometric statement of Pontryagin maximum principle (PMP) for free time (\cite{notes}, Th.~12.8), if a trajectory $q(t)$ corresponding to a control $u(t)$, $t \in [0, t_1]$, satisfies the inclusion $q(t_1) \in \partial \A_1$, then there exists a Lipschitzian curve $\lam_t \in T^*_{q(t)}M$, $\lam_t \neq 0$, $t \in [0, t_1]$, such that
\begin{align}
&\dot \lam_t = \vh_{u(t)}(\lam_t), \label{Ham01}\\
&h_{u(t)}(\lam_t) = \max_{u \in U_1} h_u(\lam_t), \label{max1}\\
&h_{u(t)}(\lam_t) = 0 \nonumber
\end{align}
for almost all $t \in [0, t_1]$. Here $h_u(\lam) = u_1 h_1(\lam) + u_2 h_2(\lam)$, $h_i(\lam) = \langle \lam, X_i(\pi(\lam))\rangle$, $i = 1, 2$, and $\map{\pi}{T^*M}{M}$ is the canonical projection of the cotangent bundle, $\pi(\lam) = q$, $\lam \in T^*_qM$. Moreover, $\vh_{u}(\lam)$ is the Hamiltonian vector field on the cotangent bundle $T^*M$ with the Hamoltonian $h_u(\lam)$.

We have $[X_1, X_1] = x X_3$, $X_3 = \pder{}{z}$, and if we denote $h_3(\lam) = \langle \lam, X_3(\pi(\lam))\rangle$, then the Hamiltonian system \eq{Ham01} reads
$$
\dh_1 = - u_2 xh_3, \quad
\dh_2 = u_1xh_3, \quad
\dh_3 = 0, \quad
\dq = u_1X_1+u_2X_2.
$$
The maximality condition \eq{max1} implies that up to reparameterization there can be two cases:
\begin{itemize}
\item[a)]
$u (t) = (\pm 1, 1)$, 
\item[b)]
$u (t) = (0, 1)$, $x(t) = 0$.
\end{itemize}
Take any $0 \leq t_1 \leq t_2$ and compute trajectories with one switching corresponding to the following controls:

1) Let 
$$u(t) = \begin{cases} 
(1, 1), & t \in [0, t_1], \\
(-1, 1), & t \in [t_1, t_2].
\end{cases}
$$
Then $x(t) = t$, $y(t) = t$, $z(t) = t^3/6$ for $t \in [0, t_1]$, 
$x(t) = 2t_1-t$, $y(t) = t$, $z(t) = t_1^3/6 + (4t_1^2(t-t_1)-2t_1(t^2-t_1^2)+(t^3-t_1^3)/3)$ for $t \in [t_1, t_2]$, thus $x(t_2) = 2t_1-t_2$, $y(t_2) = t_2$, $z(t_2) = - t_1^3 + 2t_1^2t_2-t_1t_2^2+t_2^3/6$. Thus the endpoint $q(t_2)$ satisfies the equality
\be{zS1}
z = (-3x^3+3x^2y+3xy^2+y^3)/24.
\ee

2) Let 
$$u(t) = \begin{cases} 
(-1, 1), & t \in [0, t_1], \\
(1, 1), & t \in [t_1, t_2].
\end{cases}
$$
Then $x(t) = -t$, $y(t) = t$, $z(t) = t^3/6$ for $t \in [0, t_1]$, 
$x(t) = t -2t_1$, $y(t) = t$, $z(t) = ((t^3-t_1^3)/3 - 2t_1(t^2-t_1^2)+4t_1^2(t-t_1))/2$ for $t \in [t_1, t_2]$, thus $x(t_2) = t_2-2t_1$, $y(t_2) = t_2$, $z(t_2) = - t_1^3 + 2t_1^2t_2-t_1t_2^2+t_2^3/6$. Thus the endpoint $q(t_2)$ satisfies the equality
\be{zS2}
z = (3x^3+3x^2y-3xy^2+y^3)/24.
\ee

3) Let 
$$u(t) = \begin{cases} 
(0, 1), & t \in [0, t_1], \\
(1, 1), & t \in [t_1, t_2].
\end{cases}
$$
Then $x(t) = 0$, $y(t) = t$, $z(t) = 0$ for $t \in [0, t_1]$, 
$x(t) = t -t_1$, $y(t) = t$, $z(t) = (t-t_1)^3/6$ for $t \in [t_1, t_2]$, thus $x(t_2) = t_2-t_1$, $y(t_2) = t_2$, $z(t_2) = (t_2-t_1)^3/6$. Thus the endpoint $q(t_2)$ satisfies the equality
\be{zS3}
z = x^3/6.
\ee

4) Finally, let 
$$u(t) = \begin{cases} 
(0, 1), & t \in [0, t_1], \\
(-1, 1), & t \in [t_1, t_2].
\end{cases}
$$
Then $x(t) = 0$, $y(t) = t$, $z(t) = 0$ for $t \in [0, t_1]$, 
$x(t) = t_1 -t$, $y(t) = t$, $z(t) = (t-t_1)^3/6$ for $t \in [t_1, t_2]$, thus $x(t_2) = t_1-t_2$, $y(t_2) = t_2$, $z(t_2) = (t_2-t_1)^3/6$. Thus the endpoint $q(t_2)$ satisfies the equality
\be{zS4}
z = -x^3/6.
\ee
Consider the surfaces $S_1$--$S_4$ given by Eqs. \eq{zS1}--\eq{zS4} respectively,
\begin{align*}
S_1 \ &: \ z = (-3x^3+3x^2y+3xy^2+y^3)/24, \qquad &x \geq 0,\\
S_2 \ &: \ z = (3x^3+3x^2y-3xy^2+y^3)/24, \qquad &x \leq 0,\\
S_3 \ &: \ z =  x^3 /6, \qquad &x \geq 0,\\
S_4 \ &: \ z =  -x^3 /6, \qquad &x \leq 0.
\end{align*}
Introduce the homogeneous coordinates on the set $\{y \neq 0\}$ induced by the one-parameter group of dilations \eq{dilat}:
\be{xieta}
\xi = \frac xy, \qquad \eta = \frac{24z-3x^2y-y^3}{24y^3}.
\ee
Then the surfaces $S_1$--$S_4$ are given as follows:
\begin{align*}
S_1 \ &: \ \eta = \frac{\xi(1-\xi^2)}{8} =: \f_1(\xi), \qquad &\xi \in [0, 1],\\
S_2 \ &: \ \eta = \frac{\xi(\xi^2-1)}{8} =: \f_2(\xi) = \f_1(-\xi), \qquad &\xi \in [-1, 0],\\
S_3 \ &: \ \eta = \frac{\xi^3}{6} - \frac{3\xi^2 +1}{24} =: \f_3(\xi), \qquad &\xi \in [0, 1],\\
S_4 \ &: \ \eta = -\frac{\xi^3}{6} - \frac{3\xi^2 +1}{24} =: \f_4(\xi)  = \f_3(-\xi), \qquad &\xi \in [-1, 0].
\end{align*}
The surface $\cup_{i=1}^4 S_i$ bounds a domain
$$
\B_1 = \begin{cases}
\f_3(\xi) \leq \eta \leq \f_1(\xi), \qquad& 0 \leq \xi\leq 1, \\
\f_4(\xi) \leq \eta \leq \f_2(\xi), \qquad& -1 \leq \xi\leq 0,
\end{cases}
$$
see Figs. \ref{fig:A3D1}--\ref{fig:A2D}.

\figout{
\twofiglabelsizeh
{dA1}{The boundary of $\B_1$}{fig:A3D1}{5}
{dA2}{The boundary of $\B_1$}{fig:A3D2}{5}

\twofiglabelsizeh
{attsetP2}{The boundary of $\B_1$}{fig:A3D3}{6}
{attsetprP21}{The boundary of $\B_1$ projected to $(\xi, \eta)$}{fig:A2D}{5}
}

Recall that $\A_1$ is the attainable set of system \eq{pr11}, \eq{pr12} from the point $q_0$ for arbitrary nonnegative time (the causal future of the point $q_0$). 

\begin{proposition}\label{propos:AinB1}
The set $\B_1$ is an invariant domain of system \eq{pr11}, \eq{pr12}. Moreover, $\A_1 \subset B_1$.
\end{proposition}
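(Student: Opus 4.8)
The plan is to rewrite the description of $\B_1$, given in the homogeneous coordinates \eq{xieta}, as an equivalent description in the original coordinates $(x,y,z)$ that is valid also at the apex $q_0$ (where \eq{xieta} is undefined), and then to show that every defining inequality of $\B_1$ is nondecreasing along every admissible trajectory of \eq{pr11}, \eq{pr12}. Clearing the positive factor $24y^3$ in $\f_3(\xi)\le\eta\le\f_1(\xi)$ (for $\xi\in[0,1]$) and in $\f_4(\xi)\le\eta\le\f_2(\xi)$ (for $\xi\in[-1,0]$) and substituting \eq{xieta}, one checks by a direct computation that
\begin{align*}
\B_1&=\bigl\{(x,y,z)\ :\ |x|\le y,\quad |x|^3/6\le z\le \Phi(x,y)\bigr\},\\
\Phi(x,y)&:=\tfrac{1}{24}\bigl(-3|x|^3+3x^2y+3|x|y^2+y^3\bigr)
\end{align*}
(so that $y\ge 0$ on $\B_1$, and $q_0=(0,0,0)$ satisfies all these inequalities, i.e.\ $q_0\in\B_1$). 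On $\{x\ge 0\}$ the upper bound $z\le\Phi(x,y)$ is the equation $z=f_1(x,y)$ of $S_1$ and on $\{x\le 0\}$ it is the equation $z=f_2(x,y)$ of $S_2$; the lower bound $z\ge|x|^3/6$ is the equation of $S_3$ on $\{x\ge 0\}$ and of $S_4$ on $\{x\le 0\}$.

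Along a trajectory of \eq{pr11} one has $\dot x=u_1$, $\dot y=u_2$, $\dot z=u_2x^2/2$, and $u\in U_1$ gives $u_2\ge|u_1|$, hence $u_2\pm u_1\ge0$. Therefore, with no further hypotheses,
\[
\frac{d}{dt}(y\mp x)=u_2\mp u_1\ge 0,\qquad \frac{d}{dt}\Bigl(z\mp\frac{x^3}{6}\Bigr)=\frac{x^2}{2}(u_2\mp u_1)\ge 0,
\]
so the constraints $|x|\le y$ and $z\ge|x|^3/6$ are preserved along every admissible trajectory. For the upper bound I would compute, using $\Phi=f_1$ on $\{x\ge0\}$ and $\Phi=f_2$ on $\{x\le0\}$,
\begin{align*}
\frac{d}{dt}\bigl(f_1(x,y)-z\bigr)&=\tfrac{1}{8}(u_1+u_2)(y-x)(y+3x),\\
\frac{d}{dt}\bigl(f_2(x,y)-z\bigr)&=\tfrac{1}{8}(u_2-u_1)(y-3x)(y+x),
\end{align*}
and $\frac{d}{dt}(\Phi-z)=\frac{d}{dt}(y^3/24-z)=y^2u_2/8\ge0$ on $\{x=0\}$. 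For a trajectory that stays in $\{|x|\le y\}$ the products $(y-x)(y+3x)$ on $\{x\ge0\}$ and $(y-3x)(y+x)$ on $\{x\le0\}$ are nonnegative, so the Lipschitz function $t\mapsto\Phi(x(t),y(t))-z(t)$ (the only nonsmoothness of $\Phi$ is across the Martinet plane $\{x=0\}$) has a.e.\ nonnegative derivative and hence is nondecreasing; and since, by the previous step, a trajectory issuing from $\B_1$ stays in $\{|x|\le y\}$, the bound $z\le\Phi(x,y)$ is preserved too. Thus every defining inequality of $\B_1$ is preserved, so $\B_1$ is invariant; and as $q_0\in\B_1$, every trajectory of \eq{pr11}, \eq{pr12} starting at $q_0$ stays in $\B_1$, i.e.\ $\A_1\subset\B_1$.

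The computations are short; the two points that need care are (a) checking that the $(\xi,\eta)$- and $(x,y,z)$-descriptions of $\B_1$ really coincide --- in particular that the upper face is $z=\Phi(x,y)$ with the absolute value and the lower face is exactly $z=|x|^3/6$ --- and (b) the behaviour on the Martinet plane $\{x=0\}$ and along the edges $\{x=\pm y\}$, $\{x=0,\,z=0\}$ of $\B_1$, where $\Phi$ is not differentiable and the four faces $S_1,\dots,S_4$ are glued; I expect (b) to be the main thing to write out carefully, though no genuine obstacle is anticipated. Geometrically, the picture is that every admissible velocity is a nonnegative combination $\alpha(X_1+X_2)+\beta(-X_1+X_2)$, $\alpha,\beta\ge 0$, of the velocities of the bang controls $(\pm1,1)$ that sweep out the faces $S_i$, and the monotonicity computation above is just a clean way to package this.
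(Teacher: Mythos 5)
Your proposal is correct and is essentially the paper's argument: the paper's proof consists of the single assertion that ``direct computation shows that on each of the surfaces $S_1$--$S_4$ the vector field $u_1X_1+u_2X_2$, $(u_1,u_2)\in U_1$, is directed inside $\B_1$'', and your monotonicity computations for $y\mp x$, $z\mp x^3/6$ and $\Phi(x,y)-z$ are precisely that direct computation, written out as preservation of the defining inequalities (which also handles the edges and the Martinet plane cleanly). No gap; the conclusion $q_0\in\B_1\Rightarrow\A_1\subset\B_1$ is drawn exactly as in the paper.
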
 
\begin{proof}
Direct computation shows that on each of the surfaces $S_1$--$S_4$
the vector field $u_1X_1+u_2X_2$, $(u_1, u_2) \in U_1$, is directed inside the domain $\B_1$. Since $q_0 \in \B_1$, then $\A_1 \subset \B_1$.
\end{proof}

We show in Th. \ref{th:att_set} that $\A_1 = \B_1$.

\subsection{Extremal trajectories}
Introduce the family of Hamiltonians of Pontryagin maximum principle (PMP) \cite{PBGM, notes} $h_u^{\nu}(\lam) = u_1 h_1(\lam) + u_2 h_2(\lam) - \nu \sqrt{u_2^2-u_1^2}$, where $\lam \in T^*M$, $(u_1, u_2) \in U_1$, $\nu \in \{-1, 0\}$. By PMP (Th.~12.10 \cite{notes}), if $q(t)$, $t \in [0, t_1]$, is an optimal trajectory in problem \eq{pr11}--\eq{pr14}, then there exist a Lipschitzian curve $\lam_t \in T_{q(t)}^* M$, $t \in [0, t_1]$, and a number  $\nu \in \{-1, 0\}$ such that
\begin{align*}
&\dot \lam_t = \vh^{\nu}_{u(t)}(\lam_t), \\
&h^{\nu}_{u(t)}(\lam_t) = \max_{v \in U_1} h_v(\lam_t), \\
&(\lam_t, \nu) \neq (0, 0)
\end{align*}
for almost all $t \in [0, t_1]$.

\subsubsection{Abnormal extremal trajectories}

If $\nu = 0$, then the control satisfies, up to reparameterization, the conditions
\begin{itemize}
\item[a)]
$u (t) = (\pm 1, 1)$, 
\item[b)]
$u (t) = (0, 1)$, $x(t) = 0$,
\end{itemize}
and has up to one switching. These trajectories were computed in Subsec. \ref{subsec:invar_set}, they form the boundary of the candidate attainable set $\B_1$.

\begin{remark}
Abnormal trajectories starting from an arc on the plane $\Pi$  change their causal type: first they are timelike (when belong to $\Pi$), then lightlike.
The remaining extremal trajectories preserve the causal type.
\end{remark}

\subsubsection{Normal extremals}\label{subsec:norm_extr}
If $\nu = -1$, then extremals satisfy the Hamiltonian system with the Hamiltonian $H = (h_1^2-h_2^2)/2$, $h_2 < - |h_1|$:
\be{Ham0}
\dh_1 = h_2h_3x, \quad \dh_2 = h_1h_3x, \quad \dh_3 = 0, \quad \dot x = h_1, \quad \dot y = - h_2, \quad \dot z = - h_2 x^2/2.
\ee
We can choose arclength parameterization on normal extremal trajectories and thus assume that $H \equiv -1/2$. In the coordinates $h_1 = \sinh \p$, $h_2 = - \cosh \p$, $h_3 = c$; $\p, c \in \R$, the Hamiltonian system~\eq{Ham0} reads
\begin{align}
&\dot \p = - cx, \qquad \dot c = 0, \label{Ham1}\\
&\dot x = \sinh \p, \qquad \dot y = \cosh \p, \qquad \dot z = \cosh \p x^2/2. \label{Ham2}
\end{align}
This system has an energy integral $E = cx^2/2+ \cosh \p \in [1, + \infty)$. 

\begin{remark}
The normal Hamiltonian system \eq{Ham1}, \eq{Ham2} has a discrete symmetry --- reflection
\be{refl}
(\p, c, x, y, z) \mapsto (-\p, c, -x, y, z),
\ee
and a one-parameter family of symmetries --- dilations
\be{dilat}
(t, \p, c, x, y, z) \mapsto (\a t, \p, c/\a^2, \a x, \a y, \a^3 z), \qquad \a > 0. 
\ee

Moreover, the parallel translations
\be{paral}
(x, y, z) \mapsto (x, y+a, z + b), \qquad a, b \in \R,
\ee
are symmetries of the problem since their generating vector fields $\pder{}{y}$, $\pder{}{z}$ commute with the vector fields $X_1$, $X_2$ of the orthonormal frame. 
\end{remark}

1) If $c = 0$, then
\be{c=0}
\p \equiv \p_0, \qquad x = t \sinh \p_0, \qquad y = t \cosh \p_0, \qquad z = t^3/6 \cosh \p_0 \sinh^2 \p_0.
\ee 

If $c \neq 0$, then extremal trajectories in the  Martinet flat case are obtained by a linear change of variables from extremal trajectories of a left-invariant \sL problem on the Engel group \cite{SL_Engel2}.

2) Let $c = l^2 > 0$.

2.1) If $\p_0 = 0$, then $x=z \equiv 0$, $y = t$.

2.2) Let $\sgn \p_0 = \pm 1$, $E = \cosh \p_0 > 1$, $k = \sqrt{\frac{E-1}{E+1}} \in (0, 1)$, $k' = \sqrt{1-k^2}$, $m = l k'$, $\aen = 1/k'$, $\tau = \aen l t$. Then
\begin{align*}
&x = \pm \frac{2k}{m} \sn \tau, \\
&y = \frac 1m (2 \E(\tau) - k'^2 \tau), \\
&z = - \frac{2}{3m^3} \left(k'^2\tau + 2 k^2 \sn \tau \cn \tau \dn \tau - (1+k^2) \E(\tau)\right),
\end{align*}
where $\sn \tau$, $\cn \tau$, $\dn \tau$ are Jacobi's elliptic functions with modulus $k$, and $\E(\tau) = \int_0^{\tau} \dn^2 t dt $ is Jacobi's epsilon function \cite{whit_watson, lawden}. See Figs. \ref{fig:c1p0}--\ref{fig:c1p4}.

3) Let $c = - l^2 < 0$.

3.1) If $\p_0 = 0$, then $x=z \equiv 0$, $y = t$.

3.2) Let $\sgn \p_0 = \pm 1$, $E = \cosh \p_0 > 1$, $k = \sqrt{\frac{2}{1+E}} \in (0, 1)$, $k' = \sqrt{1-k^2}$, $m = k l$, $\tau = lt/k$. Then
\begin{align*}
&x = \pm \frac{2k'}{m} \frac{\sn \tau}{\cn \tau}, \\
&y = \frac 1m \left((2-k^2)\tau + 2 \frac{\dn \tau \sn \tau}{\cn \tau} - 2 \E(\tau)  \right), \\
&z = - \frac{2}{3m^3} \left(2k'^2\tau + (k^2-2) \E(\tau) + (k^2 + (k^2-2)\sn^2 \tau) \frac{\dn \tau \sn \tau}{\cn^3 \tau}\right),\\
&\tau \in [0, K(k)),
\end{align*}
where $K(k)$ is the complete elliptic integral of the second kind \cite{whit_watson, lawden}.

\figout{
\twofiglabelsizeh
{c1p0}{The curve $(x(t), y(t))$ for $c = 1$, $\p_0 = 0$}{fig:c1p0}{4}
{c1p1}{The curve $(x(t), y(t))$ for $c = 1$, $\p_0 = 1$}{fig:c1p1}{2}

\twofiglabelsizeh
{c1p2}{The curve $(x(t), y(t))$ for $c = 1$, $\p_0 = 2$}{fig:c1p2}{2}
{c1p4}{The curve $(x(t), y(t))$ for $c = 1$, $\p_0 = 4$}{fig:c1p4}{1.8}
}

\subsection{Exponential mapping}
Introduce the exponential mapping
\begin{align*}
&\map{\Exp}{N}{M}, \qquad \Exp(\lam, t) = \pi \circ e^{t \vH}(\lam), \\
&N = \{(\lam, t) \in C \times \R_+ \mid t \in (0, + \infty) \text{ for } c \geq 0; \ \ 
t \in (0, + kK/l) \text{ for } c < 0\}, \\
&C = T_{q_0}^*M \cap \{H = - 1/2, \ h_2 < 0 \}.
\end{align*}
Formulas of Subsec. \ref{subsec:norm_extr} give an explicit parametrization of the exponential mapping.

In this subsection we describe diffeomorphic properties of the exponential mapping via the classical Hadamard's theorem on global diffeomorphism:

\begin{theorem}[\cite{implicit}]\label{th:had}
Let $\map{F}{X}{Y}$ be a smooth mapping of smooth manifolds, $\dim X = \dim Y$. Suppose that the following conditions hold:
\begin{itemize}
\item[$(1)$]
$X$ and $Y$ are connected, 
\item[$(2)$]
$Y$ is simply connected, 
\item[$(3)$]
$F$ is nondegenerate,
\item[$(4)$]
$F$ is proper (i.e., for any compact set $K \subset Y$, the preimage $F^{-1}(K) \subset X$ is compact).
\end{itemize}
Then $F$ is a diffeomorphism.
\end{theorem}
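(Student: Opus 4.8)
The plan is to deduce that $F$ is a covering map from conditions $(3)$ and $(4)$, and then to use the simple connectedness in $(2)$ to force that covering to be one-sheeted, i.e.\ a bijection; a bijective local diffeomorphism is automatically a diffeomorphism.

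First I would unpack condition $(3)$: by the inverse function theorem, a nondegenerate map between manifolds of equal dimension is a local diffeomorphism, so $F$ is an open map and every point of $X$ has an open neighbourhood carried diffeomorphically onto an open subset of $Y$. Next I would use condition $(4)$: a proper map into a locally compact Hausdorff space (such as the manifold $Y$) is closed. Hence $F(X)$ is at once open and closed in $Y$, and nonempty, so by connectedness of $Y$ in $(1)$ the map $F$ is surjective.

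The core of the argument is the classical fact that a proper local homeomorphism onto a connected, locally compact Hausdorff space is a covering map, and this is the step I would spell out in detail. Fix $y \in Y$. Properness makes the fibre $F^{-1}(y)$ compact, while the local-homeomorphism property makes it discrete; hence it is finite, say $F^{-1}(y) = \{x_1, \dots, x_n\}$. I would choose pairwise disjoint open neighbourhoods $W_i \ni x_i$ on which $F$ restricts to a homeomorphism onto an open set, and then claim there is an open $V \ni y$ with $F^{-1}(V) \subset \bigcup_{i=1}^n W_i$. If not, one finds $y_k \to y$ and points $x_k' \in F^{-1}(y_k) \setminus \bigcup_i W_i$; since $\{y\} \cup \{y_k : k \in \N\}$ is compact, properness confines the $x_k'$ to a compact set, so a subsequence converges to some $x'$ with $F(x') = y$ and $x' \notin \bigcup_i W_i$, contradicting $F^{-1}(y) \subset \bigcup_i W_i$. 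Shrinking $V$ to $V \cap \bigcap_{i=1}^n F(W_i)$ and replacing each $W_i$ by $W_i \cap F^{-1}(V)$ exhibits $V$ as an evenly covered neighbourhood of $y$; since the number of sheets is locally constant, it is constant on the connected set $Y$. Thus $F$ is a covering map.

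Finally, since $X$ is connected and $Y$ is simply connected, the covering $F$ has exactly one sheet, so $F$ is a bijection; being also a local diffeomorphism, it is a diffeomorphism. The step I expect to be the main obstacle — or at least the only nonformal one — is producing the evenly covered neighbourhood $V$, where properness must be invoked in exactly the right place (a sequential-compactness / ``no escape to infinity'' argument); everything else is routine point-set topology together with the elementary theory of covering spaces.
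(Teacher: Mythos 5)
The paper does not prove this statement at all: it is Hadamard's global diffeomorphism theorem, quoted with a citation to Krantz--Parks and used as a black box, so there is no internal proof to compare yours against. Your argument is the standard one for this result --- conditions $(3)$ and $(4)$ make $F$ a proper local diffeomorphism, hence a covering map onto the connected, locally compact $Y$ (surjectivity coming from ``open plus closed plus nonempty''), and simple connectedness of $Y$ together with connectedness and local path-connectedness of $X$ forces the covering to be one-sheeted --- and it is correct, including the only delicate step, the sequential properness argument producing the evenly covered neighbourhood.
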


Consider the following stratification in the image of the exponential mapping:
\begin{align*}
&\intt \B_1 = \cup_{i=0}^6 M_i, \\
&M_0 \ : \ x = 0, \qquad -1/24 < \eta < 0, \\
&M_1 \ : \ x > 0, \qquad \f_5(\xi) < \eta < \f_1(\xi), \\
&M_2 \ : \ x < 0, \qquad \f_5(\xi) < \eta < \f_2(\xi), \\
&M_3 \ : \ x > 0, \qquad \f_3(\xi) < \eta < \f_5(\xi), \\
&M_4 \ : \ x < 0, \qquad \f_2(\xi) < \eta < \f_4(\xi), \\
&M_5 \ : \ x > 0, \qquad  \eta = \f_5(\xi), \\
&M_6 \ : \ x < 0, \qquad  \eta = \f_5(\xi), \\
&\f_5(\xi) = (\xi^2-1)/24,
\end{align*}
see Fig. \ref{fig:decompM}.

\figout{
\onefiglabelsizen{decompM}{Stratification in the image of $\Exp$}{fig:decompM}{5}
}

And define the following stratification of the subset
$$
\tN = \{(\lam, t) \in N \mid 
t \in (0, 2 K/(\aen l)) \text{ for } c > 0; \ 
t \in (0, + \infty) \text{ for } c = 0; \ 
t \in (0,  kK/l) \text{ for } c < 0\}
$$
in the preimage of the exponential mapping: 
\begin{align*}
&\tN = \cup_{i=1}^6 \tN_i,\\
&\tN_1 \ : \  c > 0, \qquad \p_0 > 0, \qquad \tau \in (0, 2 K),\\
&\tN_2 \ : \  c > 0, \qquad \p_0 < 0, \qquad \tau \in (0, 2 K),\\
&\tN_3 \ : \  c < 0, \qquad \p_0 > 0, \qquad \tau \in (0,  K),\\
&\tN_4 \ : \  c < 0, \qquad \p_0 < 0, \qquad \tau \in (0,  K),\\
&\tN_5 \ : \  c = 0, \qquad \p_0 > 0,\\
&\tN_6 \ : \  c = 0, \qquad \p_0 < 0.
\end{align*}

\begin{proposition}\label{prop:ExpN1}
There holds the inclusion $\Exp(\tN_1) \subset M_1$. Moreover, the mapping $\map{\Exp}{\tN_1}{M_1}$ is a real-analytic diffeomorphism.
\end{proposition}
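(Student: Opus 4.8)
The plan is to check the inclusion $\Exp(\tN_1)\subset M_1$ and then deduce that $\map{\Exp}{\tN_1}{M_1}$ is a diffeomorphism from Hadamard's global diffeomorphism theorem (Theorem~\ref{th:had}), real-analyticity of the inverse being automatic once nondegeneracy is known. For the inclusion I would work with the explicit parametrization of Subsec.~\ref{subsec:norm_extr}, case~2.2: on $\tN_1$ we have $c>0$, $\p_0>0$, so the sign in the formula for $x$ is $+$ and, since $\sn\tau>0$ on $(0,2K)$, the whole trajectory has $x>0$; together with $\dot y=\cosh\p>0$ this gives $y>0$. By Proposition~\ref{propos:AinB1} the endpoint lies in $\B_1$, hence $0\le\xi\le1$ and $\f_3(\xi)\le\eta\le\f_1(\xi)$; substituting the elliptic-function expressions into \eq{xieta} upgrades these to the strict inequalities $0<\xi<1$, $\eta<\f_1(\xi)$ that, together with $\eta>\f_5(\xi)$, define $M_1$. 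The inequality $\eta>\f_5(\xi)$ is the only subtle one: a direct computation shows that the $c=0$ extremals satisfy $\xi=\tanh\p_0$, $\eta=-1/(24\cosh^2\p_0)=\f_5(\xi)$, so they sweep the surface $\{\eta=\f_5(\xi)\}$ (in particular $M_5=\Exp(\tN_5)$), and a monotonicity argument in the parameter $c$ anchored at $c=0$ places the $c>0$ extremals strictly above it.

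For Hadamard's theorem, note that the dilation symmetry \eq{dilat} acts freely on $\tN_1$ and on $M_1$ and is intertwined by $\Exp$. Quotienting by it identifies $\tN_1$ with the parameter region $\{\p_0>0,\ 0<\tau<2K(k)\}$ (an open rectangle after reparametrizing $\tau$) and $M_1$ with the planar region $P=\{0<\xi<1,\ \f_5(\xi)<\eta<\f_1(\xi)\}$; since $\f_1(\xi)-\f_5(\xi)=(1-\xi^2)(3\xi+1)/24>0$ on $(0,1)$, $P$ is the region strictly between two graphs over an interval, hence contractible. Thus $\tN_1$ and $M_1$ are connected and simply connected, giving hypotheses~(1) and~(2). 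For nondegeneracy~(3), the dilation equivariance again lets one restrict to a slice $c=\const$, reducing the question to a $2\times2$ Jacobian in $(\p_0,\tau)$, i.e.\ to the absence of conjugate points along the normal extremals for $\tau\in(0,2K)$; this I would obtain either by differentiating the elliptic-function formulas or by transferring the conjugate-time bound from the Engel-group problem \cite{SL_Engel2}, to which these extremals are linearly related.

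For properness~(4) I would again pass to the dilation quotient, where $\Exp$ induces a map $F$ from the rectangle onto $P$ (properness of $\Exp$ reduces to that of $F$, the dilation factor being a trivial $\R$-fibre). Reparametrizing $\p_0\in(0,\infty)$ by $k\in(0,1)$ and $\tau$ by $s=\tau/(2K)\in(0,1)$ turns the domain into the open unit square, and the key point is that $F$ extends continuously to $[0,1]^2$ sending the boundary square into $\partial P$: on $s=0$ and $s=1$ one has $\sn\tau=0$, so $\xi\to0$; on $k=0$ the curve degenerates to the axis $x=z\equiv0$, so $\xi\to0$; and on $k=1$ a limiting computation gives $\eta\to\f_5(\xi)$. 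Granting this, the preimage under $F$ of any compact $K'\subset P$ is a closed subset of $[0,1]^2$ avoiding its boundary, hence compact, so $\Exp$ is proper. Hadamard's theorem then yields that $\map{\Exp}{\tN_1}{M_1}$ is a diffeomorphism, and since $\Exp$ is real-analytic with nonvanishing Jacobian, its inverse is real-analytic as well.

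I expect step~(3) to be the main obstacle, being the only step that is neither a soft topological argument nor a boundary limit; the Jacobian of the exponential mapping written through Jacobi's $\sn$, $\cn$, $\dn$ and epsilon functions is cumbersome, and one essentially needs to locate (or bound below) the first conjugate time. The mitigating factors are the reduction to two variables by the dilation symmetry and the availability of the analogous computation on the Engel group.
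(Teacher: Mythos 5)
Your overall architecture coincides with the paper's: establish the inclusion, then verify the four hypotheses of Theorem~\ref{th:had}. But the two steps that carry all the analytic content are left as assertions, and only one of them is the one you flag. For the inclusion, membership in $\B_1$ (Proposition~\ref{propos:AinB1}) only yields $\eta\geq\f_3(\xi)$, and since $\f_3\leq\f_5$ on $[0,1]$ this gives no information about the inequality $\eta>\f_5(\xi)$ separating $M_1$ from $M_3$ --- precisely the inequality that distinguishes the image of the $c>0$ extremals from that of the $c<0$ ones. Your ``monotonicity argument in the parameter $c$ anchored at $c=0$'' is exactly the missing content, and nothing in the proposal supplies it. The paper instead proves both strict inequalities $\f_5(\xi)<\eta<\f_1(\xi)$ directly: for instance $\eta>\f_5(\xi)$ is rewritten as $6z>x^2y$, then differentiated along the flow \eq{Ham2} (twice) until one reaches an inequality that follows from $\cosh\p>\sinh\p>0$, and integrated back using that all quantities vanish at $t=0$; strictness comes for free. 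The second gap is nondegeneracy, which you correctly identify as the main obstacle but do not close. The paper's route is concrete: compute the Jacobian $\partial(x,y,z)/\partial(m,\tau,k)$, check its sign in the degenerate limit $k\to0$ (where it reduces to $J_0=\cos 3\tau+(8\tau^2-1)\cos\tau-4\tau\sin\tau$, shown negative on $(0,\pi]$ by a monotonicity trick) and on the face $\tau=2K$ (where it reduces to $g(k)=E^2(k)-2E(k)K(k)+(1-k^2)K^2(k)<0$), and then propagate nonvanishing to the whole of $\tN_1$ by homotopy invariance of the Maslov index. Some such global continuation argument is indispensable; a two-variable reduction by dilations does not by itself tell you the reduced Jacobian is nonzero for all $\tau\in(0,2K)$.

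On properness your quotient-by-dilations scheme is workable in principle, but two ingredients need repair. First, properness of $\Exp$ does not reduce purely to the induced map on the quotient: you must also rule out sequences whose quotient coordinates stay compact while the dilation parameter tends to $0$ or $\infty$ (these do leave every compact subset of $M_1$, but that deserves a sentence). Second, at least one claimed boundary limit is wrong: as $k\to1$, i.e.\ $\p_0\to+\infty$, the normal extremals degenerate to the bang--bang abnormal trajectories sweeping $S_1$, so the image approaches the upper boundary $\eta=\f_1(\xi)$, not $\eta=\f_5(\xi)$. This particular slip does not destroy the argument (both curves lie in the boundary of your region $P$), but it shows the limits must actually be computed rather than guessed; the paper does this by an exhaustive case analysis of sequences tending to $\partial\tN_1$ in the coordinates $(m,k,\tau)$.
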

\begin{proof}
a) Let us show that $\Exp(\tN_1) \subset M_1$. 

Let $(\lam, t) \in \tN_1$. We have to prove that $(\xi^2-1)/24 < \eta < \xi(1-\xi^2)/8$, i.e., 
$$
(x^2-y^2)y \underset{1)}{<} 24 z-3x^2y-y^3\underset{2)}{<}3x(y^2-x^2).
$$
Inequality $1)$ is rewritten as
\be{1.1}
6z>x^2y.
\ee
Differentiating this inequality by virtue of ODEs \eq{Ham2}, we get
\be{1.2}
x \cosh \p >y \sinh \p.
\ee
Differentiating once more, we get
\be{1.3}
x \sinh \p > y \cosh \p.
\ee
Since $\cosh \p > \sinh \p > 0$, we have $y > x$, and inequality \eq{1.3} is proved. Since the both sides of this inequality vanish at $t = 0$, inequality \eq{1.2} is proved as well. Similarly, inequality \eq{1.1} is proved, thus inequality $1)$ follows.

Now we prove similarly inequality $2)$: it is equivalent to
$$
24z < y^3+3x^2y+3xy^2-3x^3.
$$
Differentiating, we get
$$
3x^2<y^2+2xy,
$$
which holds since $y > x$. Returning back, we prove inequality $2)$.

So the inclusion $\Exp(\tN_1) \subset M_1$ is proved.

\medskip
b) We show that the mapping $\map{\Exp}{\tN_1}{M_1}$ is nondegenerate, i.e., the Jacobian $J = \frac{\partial(x, y, z)}{\partial(c, \p, t)}$ does not vanish on $\tN_1$. 
Direct computation of this Jacobian gives
\begin{align*}
&J = f \cdot \frac{\partial(x, y, z)}{\partial(m, \tau, k)} = f \cdot \frac{9(1-k^2)k^2m^6}{32}J_1, \qquad f \neq 0, \\
&J_1 = J_0 + o(1), \qquad k \to 0,\\
&J_0 = \cos(3\tau)+(8\tau^2-1)\cos\tau - 4 \tau \sin \tau.
\end{align*}
Since
$$
\left(\frac{J_0}{\sin \tau}\right)' = - \frac{2(2\tau-\sin 2 \tau)^2}{\sin^2 \tau} < 0, \qquad \tau \in (0, \pi),
$$
thus $J_0(\tau) < 0$, $\tau \in (0, \pi]$. 

Moreover,
\begin{align}
&\restr{J_1}{\tau=2K} = 12(1-k^2)g(k), \nonumber\\
&g(k) = E^2(k)-2E(k)K(k)+(1-k^2)K^2(k). \label{g(k)}
\end{align}
We have $g(k) = g_1(k) g_2(k)$, $g_1(k) = E(k)-(1+k)K(k)$, $g_2(k) = E(k)-(1-k)K(k)$. Since $g_1'(k) = \frac{E(k)}{k-1} < 0$, then $g_1(k)<0$ for $k \in (0, 1)$. Similarly, since $g_2'(k) = \frac{E(k)}{k+1} > 0$, then $g_2(k)>0$ for $k \in (0, 1)$. Thus $g(k) < 0$ and $\restr{J_2}{\tau = 2 K} < 0$ for $k \in (0, 1)$.

By homotopy invariance of the Maslov index (the number of conjugate points on an extremal)~\cite{cime}, we have $J(\tau) < 0$ for $\tau \in (0, 2 K]$. Thus the mapping $\map{\Exp}{\tN_1}{M_1}$ is nondegenerate.

\medskip
c) We show that this mapping is proper.
In other words, we prove that if $\tN_1 \ni (m, k, \tau) \to \partial \tN_1$, then $q = \Exp(m, k, \tau) \to \partial M_1$. 

1) Let $\tau \to 0$.

1.1) If $m \geq \eps > 0$, then $x \to 0$.

1.2) Let $m \to 0$. 

1.2.1) If $\tau/m \to 0$, then $x \to 0$.

1.2.2) Let $\tau/m \geq \eps > 0$.

1.2.2.1) If $\tau/m \to \infty$, then $y \to \infty$.

1.2.2.2) If $\tau/m \leq K$, then $\f_1 \to 0$.

2) Let $\tau \geq \eps > 0$.

2.1) Let $\am \tau \to \pi$, where $\am \tau$ is the Jacobi amplitude of modulus $k$.

2.1.1) If $m \geq \d > 0$, then $x \to 0$.

2.1.2) If $m \to 0$, then $y \to \infty$.

2.2) Let $\am \tau \leq \pi - \g < \pi$.

2.2.1) Let $m \to 0$.

2.2.1.1) If $k \geq \s > 0$, then $x \to \infty$.

2.2.1.2) Let $k \to 0$.

2.2.1.2.1) If $k/m \to 0$, then $x \to 0$.

2.2.1.2.2) If $k/m \to \infty$, then $x \to \infty$.

2.2.1.2.3) If $\a \leq k/m \leq \b$, then $y \to \infty$.

2.2.2) Let $m \geq \d > 0$.

2.2.2.1) If $m \to \infty$, then $x\to 0$.

2.2.2.2) Let $m \leq \a$.

2.2.2.2.1) If $k \to 0$, then $x \to 0$.

2.2.2.2.2) If $k \to 1$, then $\f_1 \to 0$.

Summing up, the mapping  $\map{\Exp}{\tN_1}{M_1}$  is proper.

Since $\tN_1$ and $M_1$ are connected and simply connected, this mapping is a diffeomorphism by Th.~\ref{th:had}.
\end{proof}

\begin{proposition}\label{prop:ExpN3}
There holds the inclusion $\Exp(\tN_3) \subset M_3$. Moreover, the mapping $\map{\Exp}{\tN_3}{M_3}$ is a real-analytic diffeomorphism.
\end{proposition}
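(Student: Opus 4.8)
The plan is to follow, step by step, the three-part scheme used in the proof of Proposition~\ref{prop:ExpN1}: (a) verify the inclusion $\Exp(\tN_3)\subset M_3$; (b) show that $\map{\Exp}{\tN_3}{M_3}$ is nondegenerate; (c) show that it is proper; and then conclude by Hadamard's Theorem~\ref{th:had}, since $\tN_3$ and $M_3$ are both connected and simply connected.

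For part (a) I would first rewrite, via the homogeneous coordinates \eq{xieta}, the conditions defining $M_3$ as $x>0$, $6z>x^{3}$ and $6z<x^{2}y$. Positivity of $x$ is immediate: $\p(0)=\p_0>0$ and $\dot\p=-cx=|c|x$ with $c<0$, so a short bootstrap shows $\p$ stays positive, whence $\dot x=\sinh\p>0$ and $x>0$ for $t>0$. For $6z>x^{3}$, put $N=6z-x^{3}$; then $N(0)=0$ and, by \eq{Ham2}, $\dot N=3x^{2}(\cosh\p-\sinh\p)=3x^{2}e^{-\p}>0$, so $N>0$ for $t>0$. For $6z<x^{2}y$ I would use the iterated-differentiation argument of Proposition~\ref{prop:ExpN1}: differentiating $P=x^{2}y-6z$ along \eq{Ham2} gives $\dot P=2x(y\sinh\p-x\cosh\p)=:2xQ$, differentiating $Q$ gives $\dot Q=|c|x(y\cosh\p-x\sinh\p)=:|c|xR$, and differentiating $R$ gives $\dot R=1+|c|xQ$. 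Since $P(0)=Q(0)=R(0)=0$ and $\dot R(0)=1>0$, a short bootstrap (on the interval where $R>0$ one has $Q>0$, hence $\dot R\ge1$, hence $R$ stays positive) shows $R>0$, then $Q>0$, then $P>0$ for all $t>0$.

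For part (b) I would compute the Jacobian $J=\frac{\partial(x,y,z)}{\partial(c,\p_0,t)}$ on $\tN_3$, passing to the variables $(m,k,\tau)$ adapted to the case $c<0$ (with $m=kl$, $\tau=lt/k$, $k=\sqrt{2/(1+E)}$) exactly as in part (b) of the previous proof. This should factor $J$ as a nonvanishing function times a monomial in $m,k,k'$ times a reduced Jacobian $J_1(k,\tau)$, $\tau\in(0,K)$. I would then establish $J_1\neq0$ by taking a limit profile $J_0(\tau)=\lim_{k\to0}J_1(k,\tau)$, proving $J_0$ has constant sign on $(0,\pi/2)$ by a monotonicity trick of the type $(J_0/\sin\tau)'<0$, and checking the sign of $J_1$ as $\tau\to K$ by an elliptic-integral identity in the spirit of \eq{g(k)}; since here $\tau$ ranges only over $(0,K)$ (rather than $(0,2K)$ as in $\tN_1$), I expect no conjugate point to occur, so that either this already forces $J$ to keep constant sign or, if needed, homotopy invariance of the Maslov index~\cite{cime} finishes the argument. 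I expect this to be the main obstacle, as it requires identifying the correct reduced Jacobian and the right monotone auxiliary functions.

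For part (c) I would carry out the exhaustive boundary analysis of Proposition~\ref{prop:ExpN1} over the strata of $\partial\tN_3$: the limits $\tau\to0$, $\tau\to K$, $\p_0\to0$ (equivalently $k\to1$), $\p_0\to+\infty$ ($k\to0$), $c\to0^{-}$ ($m\to0$), $c\to-\infty$ ($m\to\infty$), with the customary subdivisions according to the relative rates of $\tau/m$, $k/m$, and so on. In each regime one reads off from the explicit elliptic parametrization of Subsec.~\ref{subsec:norm_extr} that $\Exp(\lam,t)$ leaves every compact subset of $M_3$ --- because $x\to0$, or $x\to+\infty$, or $y\to+\infty$, or $\xi\to1$, or $\eta-\f_3(\xi)\to0$, or $\eta-\f_5(\xi)\to0$. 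Given (a)--(c), Theorem~\ref{th:had} yields that $\map{\Exp}{\tN_3}{M_3}$ is a diffeomorphism, and real-analyticity is inherited from the explicit formulas. Part (c) is lengthy but routine; the delicate point remains the sign of the reduced Jacobian in part (b).
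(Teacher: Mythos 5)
Your proposal follows exactly the route the paper intends: its own proof of Proposition~\ref{prop:ExpN3} is literally ``Similarly to Propos.~\ref{prop:ExpN1}'', and your plan reproduces that three-part scheme, with part (a) fully and correctly worked out (the reformulation of $M_3$ as $x>0$, $6z>x^3$, $6z<x^2y$ and the bootstrap on $N$, $P$, $Q$, $R$ all check against \eq{Ham2}). Parts (b) and (c) remain at the level of an outline (the reduced Jacobian and the elliptic-integral sign check are not actually computed), but this is precisely the level of detail the paper itself omits, so the approach and its execution match the paper's.
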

\begin{proof}
Similarly to Propos. \ref{prop:ExpN1}.
\end{proof}

\begin{proposition}\label{prop:ExpN24}
There hold the inclusions $\Exp(\tN_2) \subset M_2$ and $\Exp(\tN_4) \subset M_4$. Moreover, the mappings $\map{\Exp}{\tN_2}{M_2}$ and  $\map{\Exp}{\tN_4}{M_4}$ are real-analytic diffeomorphisms.
\end{proposition}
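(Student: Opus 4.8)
The plan is to deduce this proposition from Propositions~\ref{prop:ExpN1} and~\ref{prop:ExpN3} by conjugating with the reflection symmetry \eqref{refl}, thus avoiding any repetition of the estimates in the proof of Propos.~\ref{prop:ExpN1}. Write $\eps$ for the reflection $(\p, c, x, y, z) \mapsto (-\p, c, -x, y, z)$ and $r$ for the reflection $(x, y, z) \mapsto (-x, y, z)$ of $M$. Being a symmetry of the normal Hamiltonian system \eqref{Ham1}, \eqref{Ham2} that fixes the initial point $q_0 = (0,0,0)$, $\eps$ carries normal extremal trajectories to normal extremal trajectories, so on the preimage $N$ it intertwines the exponential mapping with $r$, namely $\Exp(\eps(\lam), t) = r\bigl(\Exp(\lam, t)\bigr)$; this is also visible directly from the formulas of Subsec.~\ref{subsec:norm_extr}, where replacing $\p_0$ by $-\p_0$ only reverses the sign in front of $x$, leaving $y$ and $z$ intact.

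First I would verify that $\eps$ is a real-analytic diffeomorphism of $\tN_2$ onto $\tN_1$ and of $\tN_4$ onto $\tN_3$: it leaves $c$ and the energy $E = \cosh\p_0$ invariant, hence also the moduli $k$, $k'$ and the quantities $m$, $\tau$ of Subsec.~\ref{subsec:norm_extr}, and it only flips $\sgn\p_0$, so the conditions $c > 0$, $\p_0 < 0$, $\tau \in (0, 2K)$ defining $\tN_2$ go precisely into those defining $\tN_1$, and likewise for $\tN_4$ and $\tN_3$. Second I would verify that $r$ maps $M_2$ onto $M_1$ and $M_4$ onto $M_3$: under $r$ the homogeneous coordinate $\xi = x/y$ changes sign, $\eta$ from \eqref{xieta} is unchanged, and $\sgn x$ is reversed; since $\f_2(\xi) = \f_1(-\xi)$, $\f_4(\xi) = \f_3(-\xi)$, and $\f_5(\xi) = (\xi^2-1)/24$ is even, the inequalities cutting out $M_2$ and $M_4$ are carried exactly onto those cutting out $M_1$ and $M_3$. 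Combining these facts with the intertwining relation gives $\Exp(\tN_2) = r(\Exp(\tN_1)) = r(M_1) = M_2$ together with the factorization
$$
\Exp|_{\tN_2} = r \circ \Exp|_{\tN_1} \circ \eps|_{\tN_2},
$$
which exhibits $\Exp : \tN_2 \to M_2$ as a composition of real-analytic diffeomorphisms by Propos.~\ref{prop:ExpN1}; the case $\Exp : \tN_4 \to M_4$ is obtained identically from Propos.~\ref{prop:ExpN3}.

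I do not expect a real obstacle here: the only point requiring care is the bookkeeping in the two verification steps — that $\eps$ respects the exact parameter ranges (the endpoints $\tau = 2K$ for $c>0$ and $\tau = K$ for $c<0$ are unchanged because $k$ and $l$ are preserved) and that the functional identities $\f_2(\xi) = \f_1(-\xi)$, $\f_4(\xi) = \f_3(-\xi)$ and the evenness of $\f_5$ indeed turn the $M_2$, $M_4$ strata into $M_1$, $M_3$. All of this is immediate from the formulas of Subsecs.~\ref{subsec:invar_set} and~\ref{subsec:norm_extr}; the entire content of the proposition is that the reflection symmetry of the problem swaps the half-spaces $\{x>0\}$ and $\{x<0\}$ along with the corresponding strata of $\Exp$, reducing everything to the two cases already settled.
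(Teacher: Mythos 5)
Your proposal is correct and is exactly the paper's argument: the paper's proof reads, in full, ``Follows from Propositions \ref{prop:ExpN1} and \ref{prop:ExpN3} via reflection \eq{refl}'', and you have simply made explicit the intertwining relation $\Exp\circ\eps = r\circ\Exp$ and the bookkeeping that $\eps$ carries $\tN_2,\tN_4$ onto $\tN_1,\tN_3$ while $r$ carries $M_1,M_3$ onto $M_2,M_4$. The only point of friction is that your computation gives $r(M_3)=\{x<0,\ \f_4(\xi)<\eta<\f_5(\xi)\}$, whereas the paper's stated definition of $M_4$ is $\f_2(\xi)<\eta<\f_4(\xi)$ --- this is evidently a typo in the paper's stratification, and your version is the intended one.
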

\begin{proof}
Follows from Propositions \ref{prop:ExpN1} and  \ref{prop:ExpN3} via reflection \eq{refl}.
\end{proof}

\begin{proposition}\label{prop:ExpN5}
There holds the inclusions $\Exp(\tN_5) \subset M_5$, $\Exp(\tN_6) \subset M_6$. Moreover, the mappings $\map{\Exp}{\tN_5}{M_5}$, $\map{\Exp}{\tN_6}{M_6}$ are real-analytic diffeomorphisms.
\end{proposition}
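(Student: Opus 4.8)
The plan is to exploit the fact that for $c = 0$ the normal extremal trajectories \eq{c=0} are elementary, so that $\map{\Exp}{\tN_5}{M_5}$ is essentially a change to ``hyperbolic polar coordinates''. I would first verify the inclusion $\Exp(\tN_5) \subset M_5$ by direct substitution: for $(\lam, t) \in \tN_5$ one has $c = 0$, $\p_0 > 0$, hence by \eq{c=0} $x = t \sinh \p_0 > 0$, $y = t \cosh \p_0$, $z = \frac{t^3}{6} \cosh \p_0 \sinh^2 \p_0$, so $\xi = x/y = \tanh \p_0 \in (0,1)$, and a one-line computation gives $24 z - 3 x^2 y - y^3 = - t^3 \cosh \p_0$ while $24 y^3 = 24 t^3 \cosh^3 \p_0$, whence
$$
\eta = \frac{24 z - 3 x^2 y - y^3}{24 y^3} = - \frac{1}{24 \cosh^2 \p_0} = \frac{\tanh^2 \p_0 - 1}{24} = \f_5(\xi).
$$
Together with $x > 0$ this yields $\Exp(\tN_5) \subset M_5$.

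Next I would show that $\map{\Exp}{\tN_5}{M_5}$ is a real-analytic diffeomorphism. Since $\eta$ is a function of $\xi$ on the two-dimensional surface $M_5$, the pair $(x, y)$ restricted to $\{0 < x < y\}$ is a global real-analytic chart on $M_5$. In this chart $\Exp$ becomes $(\p_0, t) \mapsto (x,y) = (t \sinh \p_0,\, t \cosh \p_0)$, which is real-analytic with Jacobian $\frac{\partial(x,y)}{\partial(\p_0,t)} = t(\cosh^2\p_0 - \sinh^2\p_0) = t \neq 0$, and is a bijection of $\tN_5$ (which is diffeomorphic to $(0,\infty)\times(0,\infty)$) onto $\{0 < x < y\}$ with the explicit inverse $(x,y) \mapsto (\p_0, t) = (\artanh(x/y),\, \sqrt{y^2 - x^2})$, again real-analytic on that domain. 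Hence $\map{\Exp}{\tN_5}{M_5}$ is a real-analytic diffeomorphism; alternatively one may invoke Hadamard's Th.~\ref{th:had}, connectedness and simple connectedness of $\tN_5$ and $M_5$ being obvious, nondegeneracy as just computed, and properness immediate from the explicit inverse.

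The assertions for $\tN_6$ and $M_6$ I would then deduce from the reflection symmetry \eq{refl}: the map $(\p, c, x, y, z) \mapsto (-\p, c, -x, y, z)$ preserves the normal Hamiltonian system and the condition $c = 0$, interchanges $\{\p_0 > 0\}$ with $\{\p_0 < 0\}$ (hence carries $\tN_5$ onto $\tN_6$) and maps $M_5$ onto $M_6$; conjugating $\map{\Exp}{\tN_5}{M_5}$ by it gives the real-analytic diffeomorphism $\map{\Exp}{\tN_6}{M_6}$. (Alternatively the same substitution as above works verbatim with $\p_0 < 0$, only the sign of $x$ changing.)

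I do not expect a genuine obstacle here: the $c = 0$ formulas are elementary and the map is literally hyperbolic polar coordinates, so unlike Propositions \ref{prop:ExpN1}--\ref{prop:ExpN3} no elliptic-function estimates or Maslov-index argument are needed. The only points requiring care are the verification that $\Exp$ lands precisely on the surface $\{\eta = \f_5(\xi)\}$ (the $z$-computation above) and the bookkeeping that ``diffeomorphism onto $M_5$'' is understood intrinsically on the two-dimensional surface $M_5$ rather than in the ambient coordinates on $M$.
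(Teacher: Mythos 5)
Your proposal is correct and follows the same route as the paper, whose proof is the single line that the claim ``follows from the parametrization \eq{c=0}''; you have simply filled in the elementary details (the computation $\xi=\tanh\p_0$, $\eta=\f_5(\xi)$, and the hyperbolic-polar-coordinate inversion $(\p_0,t)=(\artanh(x/y),\sqrt{y^2-x^2})$) that the paper leaves implicit.
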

\begin{proof}
Follows from the parametrization \eq{c=0} of the exponential mapping $\restr{\Exp}{\tN5}$.
\end{proof}

Introduce the following subsets in the preimage of the exponential mapping:
$$
N_0^{\pm} = \{(\lam, t) \in N \mid c > 0, \ \sgn \p_0 = \pm 1, \ t = 2K/(\aen l)\}.
$$

\begin{proposition}\label{prop:ExpN0}
There hold the inclusions $\Exp(N_0^{\pm}) \subset M_0$. Moreover, the mappings $\map{\Exp}{N_0^{\pm}}{M_0}$  are real-analytic diffeomorphisms.
\end{proposition}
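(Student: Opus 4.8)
The plan is to again appeal to Hadamard's theorem (Th.~\ref{th:had}), now applied to the restriction $\map{\Exp}{N_0^{\pm}}{M_0}$, but the dimension count is different: both $N_0^{\pm}$ and $M_0$ are two-dimensional. On the source side, $N_0^{\pm}$ is parametrized by $(c, \p_0)$ with $c > 0$ and $\sgn \p_0 = \pm 1$ (equivalently by $(l, k) \in (0,\infty)\times(0,1)$, or by $(m, k)$), while the terminal time is pinned at $t = 2K/(\aen l)$; this is evidently a connected, simply connected two-dimensional manifold. On the target side, $M_0 = \{x = 0,\ -1/24 < \eta < 0\}$ sits inside the Martinet plane $\Pi$ and is parametrized by $(y, \eta)$ with $y \neq 0$ and $\eta$ in an open interval, hence also connected and simply connected (on, say, the component $y > 0$; the translation symmetry \eq{paral} in $y$ reduces everything to a single fibre, or one simply works on $y>0$ as in the earlier propositions).

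First I would verify the inclusion $\Exp(N_0^{\pm}) \subset M_0$. At $\tau = 2K$ one has $\sn\tau = 0$, $\cn\tau = -1$, $\dn\tau = 1$, so the explicit formulas of Subsec.~\ref{subsec:norm_extr}, case 2.2), immediately give $x = 0$, confirming the endpoint lands on $\Pi$. It then remains to check that the corresponding value of $\eta$ lies strictly between $-1/24$ and $0$. Using $\E(2K) = 2\E(K) = 2E(k)$ and the formulas for $y$ and $z$ at $\tau = 2K$, the quantity $\eta = (24z - 3x^2 y - y^3)/(24 y^3) = (24z - y^3)/(24 y^3)$ (since $x=0$) reduces to an explicit function of $k$ alone — this is exactly where the homogeneity is doing its job. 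I expect this function to be monotone in $k$ with limits $0$ and $-1/24$ as $k \to 0$ and $k \to 1$ (or vice versa); the monotonicity should follow from a computation of its derivative analogous to the $g(k)$-argument in Propos.~\ref{prop:ExpN1}, possibly after factoring as $g_1 g_2$.

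Next, nondegeneracy: the relevant Jacobian is now the $2\times 2$ determinant $\partial(y,\eta)/\partial(c,\p_0)$ (or $\partial(y,z)/\partial(m,k)$ restricted to $\tau = 2K$, modulo the nonvanishing homogeneity factor and the $\eta$-versus-$z$ change). In fact this is essentially the boundary trace of the $3\times 3$ Jacobian from Propos.~\ref{prop:ExpN1}: the computation there already produced $\restr{J_1}{\tau = 2K} = 12(1-k^2)g(k)$ with $g(k) < 0$ on $(0,1)$, so the two-dimensional Jacobian here should again be a nonzero multiple of $g(k)$ and hence nonvanishing. Properness is then checked by the same exhaustive boundary analysis as in Propos.~\ref{prop:ExpN1}, but it is considerably shorter since only two boundary regimes occur: $k \to 0$ (where $y \to \infty$, or one of $y \to \infty$ / $\eta \to 0$ after rescaling) and $k \to 1$ (where $\eta \to -1/24$), together with $m \to 0$ and $m \to \infty$ forcing $y \to \infty$ and $y \to 0$ respectively. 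With connectedness, simple connectedness, nondegeneracy and properness in hand, Th.~\ref{th:had} yields that $\map{\Exp}{N_0^{\pm}}{M_0}$ is a real-analytic diffeomorphism.

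The main obstacle I anticipate is the explicit identification of the $\eta$-coordinate of $\Exp(N_0^{\pm})$ as a function of $k$ and the proof that it sweeps out exactly $(-1/24, 0)$ monotonically: this requires simplifying the elliptic-function expressions for $y$ and $z$ at $\tau = 2K$ and then running a monotonicity argument of Legendre-relation type. Everything else — the inclusion, nondegeneracy via the already-computed boundary Jacobian, and the (now short) properness case analysis — should be routine given the machinery already developed in Propositions~\ref{prop:ExpN1}--\ref{prop:ExpN5}.
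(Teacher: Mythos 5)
Your proposal is correct in substance, but it takes a different organizational route from the paper. The paper does not apply Hadamard's theorem here at all: it first uses the reflection \eq{refl} to reduce to $N_0^+$ and then the dilations \eq{dilat} to fix $c=1$, which collapses the two-dimensional statement to a one-variable one. Explicitly, at $t=2K/(\aen l)$ with $c=1$ it computes $x=0$, $\xi=0$ and
$\eta=\eta_1(k)=-\tfrac{1}{24}+\frac{(1+k^2)E(k)-(1-k^2)K(k)}{6(2E(k)-(1-k^2)K(k))^3}$,
then shows $\eta_1(0^+)=-1/24$, $\eta_1(1^-)=0$ and $\eta_1'(k)=-\frac{(1-k^2)g(k)}{2k(2E(k)-(1-k^2)K(k))^4}>0$ using $g(k)<0$ from \eq{g(k)}; thus $\eta_1$ is a strictly increasing diffeomorphism of $(0,1)$ onto $(-1/24,0)$, and this single monotonicity statement simultaneously delivers the inclusion, injectivity, surjectivity and smooth invertibility once the dilation direction is reinstated. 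Your version instead runs the full Hadamard package (connectedness, simple connectedness, nondegeneracy via the boundary trace of the $3\times 3$ Jacobian, properness) on the two-dimensional map; this is workable --- in particular your observation that $\partial x/\partial m=\partial x/\partial k=0$ on $\{\tau=2K\}$ forces the restricted $2\times 2$ Jacobian to be a nonzero multiple of $\restr{J_1}{\tau=2K}=12(1-k^2)g(k)$ is sound --- but it duplicates effort, since the explicit $\eta$-versus-$k$ computation you flag as the ``main obstacle'' already contains everything needed and is exactly what the paper carries out. The one point worth noting is that what you anticipated as the hard step is indeed the entire content of the paper's proof, and your hedge on the direction of monotonicity resolves as: $\eta_1$ increases from $-1/24$ at $k\to 0$ to $0$ at $k\to 1$.
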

\begin{proof}
In view of the reflection \eq{refl}, it suffices to consider only the set $N_0^+$. And in view of the dilations \eq{dilat}, it is enough to consider only the case $c = 1$.

If $c = 1$, $\p_0> 0$, $t = 2K/(\aen l) = 2K/k'$, then
\begin{align*}
&x = 0, \\
&y = k'(4E(k)-2k'^2K(k)), \\
&z = \frac 43 k'^3((1+k^2)E(k)-k'^2K(k)),\\
&\xi = 0, \\
&\eta = -\frac{1}{24} + \frac{(1+k^2)E(k)-(1-k^2)K(k)}{6(2E(k)-(1-k^2)K(k))^3} =:\eta_1(k).
\end{align*}
We have $\lim_{k\to 0} \eta_1(k) = - 1/24$, $\lim_{k\to 1} \eta_1(k) = 0$ and $\eta_1'(k) = - \frac{(1-k^2)g(k)}{2k(2E(k)-(1-k^2)K(k))^4}$, where the function $g(k)$ is given by \eq{g(k)}. We showed in the proof of Propos. \ref{prop:ExpN1} that $g(k) < 0$, $k \in (0, 1)$, thus $\map{\eta_1}{(0, 1)}{(-1/24, 0)}$ is a strictly increasing diffeomorphism.

Thus $\Exp(N_0^+) \subset M_0$ and $\map{\Exp}{N_0^+}{M_0}$ is a diffeomorphism.
\end{proof}

Denote 
\begin{align*}
&N_+ = \tN_1 \cup \tN_3 \cup N_5, \\
&N_- = \tN_2 \cup \tN_4 \cup N_6, \\
&M_+ = M_1 \cup M_3 \cup M_5 = (\intt \B_1) \cap \{x > 0\}, \\
&M_- = M_2 \cup M_4 \cup M_6 = (\intt \B_1) \cap \{x < 0\}.
\end{align*}

\begin{theorem}\label{th:ExpN+-}
There hold the inclusions $\Exp(N_{\pm}) \subset M_{\pm}$. Moreover, the mappings $\map{\Exp}{N_{\pm}}{M_{\pm}}$  are real-analytic diffeomorphisms.
\end{theorem}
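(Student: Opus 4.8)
The plan is to obtain the theorem by gluing together the stratum-wise diffeomorphism results already established, and then adding one short verification along the interface $\{c=0\}$. By definition $N_+=\tN_1\sqcup\tN_3\sqcup\tN_5$ and $M_+=M_1\sqcup M_3\sqcup M_5$, both disjoint unions. Propositions~\ref{prop:ExpN1}, \ref{prop:ExpN3}, \ref{prop:ExpN5} give $\Exp(\tN_1)=M_1$, $\Exp(\tN_3)=M_3$, $\Exp(\tN_5)=M_5$, each restriction a real-analytic diffeomorphism. Hence $\Exp(N_+)\subset M_+$ at once, and since the three source pieces and the three target pieces are pairwise disjoint, $\map{\Exp}{N_+}{M_+}$ is a bijection. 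The case of $N_-,M_-$ will then follow from the reflection symmetry~\eqref{refl}, which interchanges $\tN_1\leftrightarrow\tN_2$, $\tN_3\leftrightarrow\tN_4$, $\tN_5\leftrightarrow\tN_6$ and, in the image, $M_1\leftrightarrow M_2$, $M_3\leftrightarrow M_4$, $M_5\leftrightarrow M_6$, exactly as Proposition~\ref{prop:ExpN24} was deduced.

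It remains to upgrade ``piecewise real-analytic diffeomorphism plus global bijection'' to ``global real-analytic diffeomorphism.'' First I would observe that $N_+$ is an open real-analytic submanifold of $N$: the interface $\{c=0\}$ is an \emph{interior} hypersurface, because for fixed $\p_0>0$ and fixed $t$ all values of $c$ near $0$ (positive, zero, negative) lie in $N_+$ -- the upper time bound $2K/(\aen l)$ for $c>0$ and $kK/l$ for $c<0$ tends to $+\infty$ as $c\to 0$. Moreover $\Exp=\pi\circ e^{t\vH}$ is real-analytic on $N$ since $\vH$ is; equivalently the elliptic-function parametrizations of Subsec.~\ref{subsec:norm_extr} glue real-analytically to formula~\eqref{c=0} across $c=0$. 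Likewise $M_+=(\intt\B_1)\cap\{x>0\}$ is an open real-analytic submanifold of $\R^3$. Thus $\map{\Exp}{N_+}{M_+}$ is a real-analytic bijection of real-analytic $3$-manifolds, and it suffices to show it is everywhere nondegenerate: the local real-analytic inverses from the inverse function theorem then patch to a global real-analytic inverse. (Alternatively one may invoke Hadamard's Theorem~\ref{th:had}: $N_+$ and $M_+$ are connected, $M_+$ is simply connected, nondegeneracy is checked below, and properness holds since a nondegenerate bijection is an open map, hence a homeomorphism, hence proper.)

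Nondegeneracy on $\tN_1$ and $\tN_3$ is part of Propositions~\ref{prop:ExpN1} and~\ref{prop:ExpN3}, so the only new point -- and the \emph{main obstacle}, since the elliptic parametrizations degenerate at $c=0$ -- is nondegeneracy of $\Exp$ along $\tN_5$. Here I would exploit the special structure of $\tN_5$: on it $x\equiv 0$, hence $\partial x/\partial\p_0=\partial x/\partial t=0$, so in the coordinates $(c,\p_0,t)$ the Jacobian matrix $\partial(x,y,z)/\partial(c,\p_0,t)$ is block triangular along $\tN_5$, with scalar block $\partial x/\partial c$ and $2\times 2$ block $\partial(y,z)/\partial(\p_0,t)$. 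The latter is nondegenerate because $\map{\Exp}{\tN_5}{M_5}$ is a diffeomorphism (Proposition~\ref{prop:ExpN5}). For the scalar block, linearizing the normal Hamiltonian system~\eqref{Ham1}, \eqref{Ham2} in the parameter $c$ about the solution~\eqref{c=0} gives, at $c=0$, first $\partial\p/\partial c=-\frac{t^2}{2}\sinh\p_0$ and then $\partial x/\partial c=-\frac{t^3}{6}\cosh\p_0\sinh\p_0$, which is nonzero for $\p_0>0$, $t>0$. Hence $\Exp$ is nondegenerate on all of $N_+$, so $\map{\Exp}{N_+}{M_+}$ is a real-analytic diffeomorphism, and the $N_-$ case follows by the reflection~\eqref{refl}.
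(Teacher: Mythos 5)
Your overall strategy is sound, and in one respect cleaner than the paper's: you obtain bijectivity of $\map{\Exp}{N_+}{M_+}$ for free by gluing the three stratum-wise diffeomorphisms of Propositions \ref{prop:ExpN1}, \ref{prop:ExpN3}, \ref{prop:ExpN5} over the disjoint decompositions of $N_+$ and $M_+$, whereas the paper re-runs Hadamard's Theorem \ref{th:had} on all of $N_+$ and therefore has to re-verify properness. You also correctly isolate the one genuinely new ingredient: nondegeneracy of $\Exp$ along the interface $\tN_5=\{c=0,\ \p_0>0\}$. (For comparison, the paper settles exactly this point by a different device: since $\tN_5\subset\cl(\tN_1)$ and $\restr{\Exp}{\tN_1}$ is nondegenerate, homotopy invariance of the Maslov index yields nondegeneracy on $\tN_5$.)

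However, your argument for that key step rests on a false premise. You claim that $x\equiv 0$ on $\tN_5$, so that the Jacobian matrix $\partial(x,y,z)/\partial(c,\p_0,t)$ is block-triangular there. By \eq{c=0}, on $\tN_5$ one has $x=t\sinh\p_0>0$ (consistently with $\Exp(\tN_5)\subset M_5\subset\{x>0\}$); the loci with $x=0$ are $N_0^{\pm}$ and the trajectories with $\p_0=0$, which you appear to have conflated with $\tN_5$. Hence $\partial x/\partial\p_0=t\cosh\p_0$ and $\partial x/\partial t=\sinh\p_0$ do not vanish, the determinant does not factor as $(\partial x/\partial c)\cdot\det\bigl(\partial(y,z)/\partial(\p_0,t)\bigr)$, and your evaluation of the Jacobian is not justified. (A secondary issue: even granting the factorization, the nondegeneracy of the $2\times2$ block does not follow from $\restr{\Exp}{\tN_5}$ being a diffeomorphism onto the surface $M_5$; that only gives rank $2$ of the full $3\times 2$ matrix $\partial(x,y,z)/\partial(\p_0,t)$.) The conclusion is nevertheless salvageable by completing the first-order expansion in $c$ that you began: from \eq{Ham1}, \eq{Ham2} one gets at $c=0$, besides $\partial x/\partial c=-\tfrac{t^3}{6}\sinh\p_0\cosh\p_0$, also $\partial y/\partial c=-\tfrac{t^3}{6}\sinh^2\p_0$ and $\partial z/\partial c=-\tfrac{t^5}{20}\sinh^4\p_0-\tfrac{t^5}{30}\sinh^2\p_0\cosh^2\p_0$, and the full $3\times3$ determinant then evaluates to $t^6\sinh^2\p_0/45\neq 0$ for $\p_0>0$, $t>0$. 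With that computation (or with the paper's Maslov-index argument) in place of the block-triangularity claim, your proof goes through.
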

\begin{proof}
By virtue of the reflection \eq{refl}, it suffices to consider only the domain $N_+$. 

The inclusion $\Exp(N_{+}) \subset M_{+}$ follows from Propositions \ref{prop:ExpN1}, \ref{prop:ExpN3}, \ref{prop:ExpN5}.

The mappings $\restr{\Exp}{\tN_1}$ and $\restr{\Exp}{\tN_3}$ are nondegenerate by Propositions \ref{prop:ExpN1} and  \ref{prop:ExpN3} respectively.
Since $\tN_5 \subset \cl(\tN_1)$ and $\restr{\Exp}{\tN_1}$ is nondegenerate, then it follows by homotopy invariance of the Maslov index that the mapping $\map{\Exp}{N_{+}}{M_{+}}$ is also nondegenerate  at the set $N_5$. Summing up, $\restr{\Exp}{N_+}$ is nondegenerate. 

Similarly to the proof of Propos. \ref{prop:ExpN1} it follows that the mapping $\map{\Exp}{N_{+}}{M_{+}}$ is proper.

Then Theorem \ref{th:had} implies that this mapping is a diffeomorphism.
\end{proof}

\begin{proposition}\label{prop:JN0}
The mapping $\map{\Exp}{N }{M }$ is a local diffeomorphism at points of $N_0^{\pm}$. 
\end{proposition}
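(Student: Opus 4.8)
The statement to prove is purely local: saying that $\Exp$ is a local diffeomorphism at a point of $N_0^{\pm}$ means exactly that the Jacobian $J = \partial(x,y,z)/\partial(c,\p,t)$ of $\Exp$ is nonzero there, after which the inverse function theorem supplies a neighbourhood in $N$ on which $\Exp$ restricts to a diffeomorphism onto its image. Thus, unlike in Theorem \ref{th:ExpN+-}, no properness or Hadamard argument is involved — which is just as well, since $N_0^{\pm}$ is a hypersurface, not an open subset, of $N$, so Theorem \ref{th:had} does not apply to it. The plan is therefore simply to evaluate $J$ on $N_0^{\pm}$ and to check that it does not vanish.

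By the reflection symmetry \eqref{refl} it suffices to consider $N_0^{+}$. A point of $N_0^{+}$ has $c > 0$, $\p_0 > 0$ and $t = 2K/(\aen l)$, which in the elliptic-coordinate parametrization of Subsec.~\ref{subsec:norm_extr} for $c > 0$ corresponds to $\tau = 2K$ with $k \in (0,1)$ and $m = lk' > 0$. I would first observe that nothing in the parametrization degenerates at $\tau = 2K$: Jacobi's functions $\sn$, $\cn$, $\dn$ and Jacobi's epsilon function are real-analytic, so the formulas of Subsec.~\ref{subsec:norm_extr} for $x,y,z$ are real-analytic in $(m,k,\tau)$ on a full neighbourhood of $\tau = 2K$; moreover the change of variables $(c,\p,t)\mapsto(m,k,\tau)$ is a real-analytic diffeomorphism near such points — its Jacobian is the factor $f\neq 0$ from the proof of Proposition \ref{prop:ExpN1}, which does not involve $\tau$ (indeed $k = \tanh(\p_0/2)$ on $N_0^{+}$, so $dk/d\p_0 > 0$). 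Hence the Jacobian $J$, as computed in the proof of Proposition \ref{prop:ExpN1}, extends real-analytically to $\tau = 2K$ and may legitimately be evaluated there.

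It then remains only to invoke that evaluation. From the proof of Proposition \ref{prop:ExpN1},
\[
J = f\cdot\frac{9(1-k^2)k^2m^6}{32}\,J_1,\qquad \restr{J_1}{\tau=2K}=12(1-k^2)\,g(k),
\]
where $g(k)=E^2(k)-2E(k)K(k)+(1-k^2)K^2(k)=g_1(k)g_2(k)$, and where it was shown in that proof that $g_1(k)<0$ and $g_2(k)>0$, hence $g(k)<0$, for $k\in(0,1)$. Since $f\neq 0$, $0<k<1$ and $m>0$ at every point of $N_0^{+}$, we conclude $\restr{J}{\tau=2K}\neq 0$, so $\Exp$ is a local diffeomorphism at every point of $N_0^{+}$, and, by \eqref{refl}, at every point of $N_0^{\pm}$.

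In short, the argument recycles the Jacobian computation already performed for Proposition \ref{prop:ExpN1}; the one place that needs genuine attention — the nearest thing to an obstacle — is the verification in the second paragraph, namely that the elliptic parametrization, the change of variables, and hence the explicit formula for $J$, all extend real-analytically across the critical value $\tau = 2K$ at which the normal trajectory returns to the Martinet plane $\{x=0\}$, so that the known value $\restr{J_1}{\tau=2K}=12(1-k^2)g(k)$ is applicable.
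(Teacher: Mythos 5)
Your proof is correct and follows essentially the same route as the paper: the paper's own proof of Proposition \ref{prop:JN0} extends the nondegeneracy of $\restr{\Exp}{\tN_1}$ to the boundary stratum $N_0^+\subset\cl(\tN_1)$ by appealing to homotopy invariance of the Maslov index, but the computation underlying that appeal is exactly the one you cite, namely $\restr{J_1}{\tau=2K}=12(1-k^2)g(k)<0$ established in the proof of Proposition \ref{prop:ExpN1}. Your direct evaluation of the Jacobian at $\tau=2K$ followed by the inverse function theorem is a legitimate and somewhat more explicit phrasing of the same argument, including your correct remark that only local nondegeneracy, not a Hadamard-type global statement, is needed on the hypersurface $N_0^{\pm}$.
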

\begin{proof}
By virtue of the reflection \eq{refl}, we can consider only the case $N_0^+$.

But $N_0^+ \subset \cl(\tN_1)$ and $\map{\Exp}{\tN_1}{M}$ is nondegenerate. Then by homotopy invariance of the Maslov index the mapping $\map{\Exp}{N }{M }$ is a local diffeomorphism at points of $N_0^{+}$. 
\end{proof}

\subsection{Attainable sets and existence of optimal trajectories}

\begin{theorem}\label{th:att_set}
We have $\A_1 = \B_1$.
\end{theorem}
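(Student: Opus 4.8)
The plan is to prove the two inclusions $\A_1 \subset \B_1$ and $\B_1 \subset \A_1$ separately. The first inclusion is already established in Proposition~\ref{propos:AinB1}, so the whole work goes into the reverse inclusion $\B_1 \subset \A_1$, i.e.\ showing every point of $\B_1$ is reachable from $q_0$ by an admissible trajectory. The natural strategy is: (i) first show that the open part $\intt \B_1$ is covered by endpoints of normal and $c=0$ extremal trajectories, using the diffeomorphism results already proved; (ii) then handle $\partial \B_1$ separately, since its points arise as endpoints of the abnormal (bang-bang, one-switching) trajectories computed in Subsec.~\ref{subsec:invar_set}; (iii) finally combine the pieces and appeal to closedness/topological arguments to conclude equality.

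For step (i), recall the stratification $\intt \B_1 = \bigcup_{i=0}^6 M_i$ and the corresponding preimage strata. Theorem~\ref{th:ExpN+-} gives real-analytic diffeomorphisms $\map{\Exp}{N_\pm}{M_\pm}$, so $M_1 \cup M_2 \cup M_3 \cup M_4 \cup M_5 \cup M_6 = M_+ \cup M_-$ is entirely filled by endpoints of normal ($c \neq 0$) and degenerate ($c=0$) extremal trajectories; Proposition~\ref{prop:ExpN0} takes care of $M_0$ via $\map{\Exp}{N_0^\pm}{M_0}$. Hence $\intt \B_1 \subset \A_1$: every interior point is the terminal point of a genuine admissible trajectory of \eq{pr11}, \eq{pr12} issuing from $q_0$ (future directed, since these are extremals of the \sL problem and the controls lie in $U_1$).

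For step (ii), the boundary $\partial \B_1 = \bigcup_{i=1}^4 S_i$ consists exactly of the endpoints of the one-switching controls 1)--4) computed in Subsec.~\ref{subsec:invar_set}; those computations exhibit, for each boundary point, an explicit admissible control steering $q_0$ to it (with the appropriate sign constraint $x \gtrless 0$). Together with $q_0 \in \B_1$ itself, this shows $\partial \B_1 \subset \A_1$ as well, so $\B_1 \subset \A_1$. Combined with Proposition~\ref{propos:AinB1} this yields $\A_1 = \B_1$.

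I expect the main subtlety to be not the set-theoretic bookkeeping but making sure the trajectories used are \emph{admissible in the right sense}: the controls must stay in $U_1 = \{u_2 \ge |u_1|\}$ along the whole trajectory, and for the reachability conclusion we only need reachability (not optimality), so we may invoke the arclength-parametrized normal extremals directly without worrying yet about maximality. A second point needing care is that $\Exp$ was defined on $N$ with time ranges possibly exceeding those of $\tN$ (e.g.\ $t \in (0,+\infty)$ versus $t \in (0, 2K/(\aen l))$ for $c>0$); one should check that extending beyond $\tN$ does not carry trajectories out of $\cl \B_1$, which follows from invariance of $\B_1$ (Proposition~\ref{propos:AinB1}) — trajectories that start in $\B_1$ stay in $\B_1$, so no reachable point ever leaves $\B_1$, consistent with $\A_1 = \B_1$. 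Finally, one should remark that $\A_1$ is automatically the causal future $J^+(q_0)$ by definition of the problem, so the theorem simultaneously identifies $J^+(q_0)$ with the explicit domain $\B_1$.
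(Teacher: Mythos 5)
Your proposal is correct and follows essentially the same route as the paper: Proposition~\ref{propos:AinB1} gives $\A_1 \subset \B_1$, Theorem~\ref{th:ExpN+-} together with Proposition~\ref{prop:ExpN0} gives $\intt \B_1 \subset \Exp(N) \subset \A_1$, and the explicit one-switching abnormal trajectories of Subsec.~\ref{subsec:invar_set} cover $\partial \B_1$. The additional cautionary remarks (admissibility of controls, time ranges beyond $\tN$) are sensible but do not change the argument.
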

\begin{proof}
By virtue of Propos. \ref{propos:AinB1}, it remains to prove the inclusion $\A_1 \supset \B_1$. But Theorem \ref{th:ExpN+-} and Propos. \ref{prop:ExpN0} imply that $\Exp(N_+\cup N_-\cup N_0^+) \supset M_+\cup M_-\cup M_0 = \intt \B_1$. Thus $\A_1 \supset \intt \B_1$. 

Further, each point of $\partial \B_1$ is reachable from $q_0$ by an abnormal trajectory, thus $\A_1 \supset \partial \B_1$. 

Summing up, $\A_1 \supset \B_1$, and the equality $\A_1 = \B_1$ follows. 
\end{proof}

Recall that $J^+(q)$ is the causal future of a point $q \in M$, i.e., the attainable set from $q$ for arbitrary nonnegative time. Similarly,  $J^-(q)$ is the causal past of $q$, i.e., the set of points attainable from $q$ for arbitrary nonpositive time. Notice that $\A_1 = J^+({q_0})$. 

\begin{corollary}\label{cor:J+-}
Let $q = (0, y_0, z_0) \in M$. Then
\begin{align*} 
&J^+(q) = 
 \begin{cases}
\f_3(\xi_0) \leq \eta_0 \leq \f_1(\xi_0), \qquad& 0 \leq \xi_0\leq 1, \qquad y \geq y_0,\\
\f_4(\xi_0) \leq \eta_0 \leq \f_2(\xi_0), \qquad& -1 \leq \xi_0\leq 0, \qquad y \geq y_0,
\end{cases}\\
&\xi_0 = \frac{x}{y-y_0}, \qquad \eta_0 = \frac{24(z-z_0)-3x^2(y-y_0)-(y-y_0)^3}{24(y-y_0)^3}, \\
&J^-(q) = 
 \begin{cases}
\f_3(\xi_1) \leq \eta_1 \leq \f_1(\xi_1), \qquad& 0 \leq \xi_1\leq 1, \qquad y \leq  y_0,\\
\f_4(\xi_1) \leq \eta_1 \leq \f_2(\xi_1), \qquad& -1 \leq \xi_1\leq 0, \qquad y \leq  y_0,
\end{cases}\\
&\xi_1 = -\xi_0, \qquad \eta_1 = \eta_0.
\end{align*}
\end{corollary}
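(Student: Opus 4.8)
The plan is to deduce both formulas from Theorem~\ref{th:att_set}, which identifies $\A_1 = J^+(q_0) = \B_1$, together with the symmetries of the problem. For $J^+(q)$ with $q = (0, y_0, z_0) \in \Pi$: the parallel translations \eq{paral} are symmetries of the control system preserving $\Pi$, so $J^+(q) = (0, y_0, z_0) + J^+(q_0) = (0, y_0, z_0) + \B_1$, and it remains only to rewrite $\B_1$ in the translated coordinates. Substituting $(x, y, z) \mapsto (x, y - y_0, z - z_0)$ into the definition \eq{xieta} of $\xi, \eta$ turns them into the quantities $\xi_0, \eta_0$ of the statement, and the defining inequalities of $\B_1$ become those listed for $J^+(q)$. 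Since $\dot y = u_2 \ge 0$ along every admissible trajectory, $y$ is nondecreasing, which gives the extra constraint $y \ge y_0$.

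For $J^-(q)$ the idea is to reduce to $J^+$ by a time-reversal symmetry. Backward motion of system \eq{pr11}, \eq{pr12} is governed by $\dot q = -(u_1 X_1 + u_2 X_2)$ with $u \in U_1$, whose velocities lie in the past cone $-U_1$; so time reversal alone does not produce an admissible forward system, and one needs an extra reflection. I would use the linear involution $\Phi\colon (x, y, z) \mapsto (x, -y, -z)$: since $X_1, X_2$ depend only on $x$, a short computation gives $\Phi_* X_1 = X_1$ and $\Phi_* X_2 = -X_2$, so $\Phi$ conjugates the backward dynamics to $(-u_1) X_1 + u_2 X_2$, which after the admissible relabelling $(u_1, u_2) \mapsto (-u_1, u_2) \in U_1$ is exactly the forward dynamics, and $\sqrt{u_2^2 - u_1^2}$ is $\Phi$-invariant. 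Tracking endpoints of the reversed-and-reflected trajectories gives $J^-(q) = \Phi(J^+(\Phi(q)))$; since $\Phi(q) = (0, -y_0, -z_0) \in \Pi$, the first step yields $J^+(\Phi(q)) = (0, -y_0, -z_0) + \B_1$, and linearity of $\Phi$ with $\Phi^2 = \Id$ give $J^-(q) = (0, y_0, z_0) + \Phi(\B_1)$. Equivalently, $(x, y, z) \in J^-(q)$ iff $(x, y_0 - y, z_0 - z) \in \B_1$; computing the coordinates \eq{xieta} of that point, the sign changes in the numerator and denominator of $\eta$ cancel, so one obtains $-\xi_0 = \xi_1$ and $\eta_0 = \eta_1$, and the inequalities of $\B_1$ become those listed for $J^-(q)$. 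The constraint $y \le y_0$ holds because backward motion has $\dot y = -u_2 \le 0$.

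I expect no serious difficulty; the work consists of elementary substitutions in the homogeneous coordinates. The only step that needs care is the $J^-$ part: time reversal must be combined with the reflection $\Phi$ (time reversal alone lands in the past cone $-U_1$), and then the action of $\Phi$ on the coordinates $\xi, \eta$ must be propagated correctly, which is where the sign pattern $\xi_1 = -\xi_0$, $\eta_1 = \eta_0$ comes from.
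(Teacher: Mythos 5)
Your proposal is correct and follows essentially the same route as the paper: the paper derives $J^+(q)$ from $J^+(q_0)=\A_1$ via the translations \eq{paral} and obtains $J^-(q)$ ``by time inversion,'' which is precisely your time-reversal-plus-reflection argument spelled out (your involution $(x,y,z)\mapsto(x,-y,-z)$ is one valid choice of the conjugating map; the alternative $(x,y,z)\mapsto(-x,-y,-z)$ gives the same set since $\B_1$ is invariant under the reflection \eq{refl}, i.e.\ under $\xi\mapsto-\xi$). Your explicit verification that $\xi_1=-\xi_0$, $\eta_1=\eta_0$ and the monotonicity of $y$ are exactly the computations the paper leaves implicit.
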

\begin{proof}
The expression for $J^+(q)$ follows from the expression for $J^+({q_0}) = \A_1$ via the translation \eq{paral}.
And the expression for $J^-(q)$ follows by time inversion. 
\end{proof}

\begin{theorem}\label{th:exist}
Let points $q_0, q_1 \in M$ satisfy the inclusion $q_1 \in J^+({q_0})$. Then there exists an optimal trajectory in problem \eq{pr11}--\eq{pr14}.
\end{theorem}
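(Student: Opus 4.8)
The statement is a standard existence result in a control-theoretic setting, and the natural route is via Filippov's theorem (compactness of attainable sets for systems with compact convex velocity sets) combined with upper semicontinuity of the length functional. The main subtlety is that the control set $U_1 = \{u_2 \geq |u_1|\}$ is noncompact, so I would first exploit the reparametrization invariance of the \sL length (the Remark after the orthonormal-frame discussion) to restrict attention to a compact set of admissible velocities without losing any candidate maximizer.

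\textbf{Step 1: Reduction to a compact control set.} By the Remark, \sL length is invariant under monotone Lipschitz time reparametrization, so it suffices to look for a maximizer among trajectories parametrized either by \sL arclength (if $q_1 \neq q_0$ and the connecting curves can be taken timelike) or by the condition $u_1 \equiv 0$, $u_2 \equiv 1$ in the purely-lightlike boundary case. In either normalization the relevant controls lie in a compact subset of $U_1$: for arclength parametrization we have $u_2^2 - u_1^2 = 1$ with $u_2 \geq 1$, and a further reparametrization allows us to bound $u_2$ (equivalently, to bound the parametrization interval $[0,t_1]$), so that $(u_1,u_2)$ ranges over a compact convex set $\tilde U \subset U_1$. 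The velocity set $\{u_1 X_1(q) + u_2 X_2(q) : (u_1,u_2)\in \tilde U\}$ is then compact and convex for each $q$, and lies in a compact region of $M$ because $\A_1 = \B_1$ (Theorem \ref{th:att_set}) together with Corollary \ref{cor:J+-} pins all admissible trajectories from $q_0$ to $q_1$ into the compact set $J^+(q_0) \cap J^-(q_1)$ (the portion of $\B_1$, translated, lying "between" $q_0$ and $q_1$), which is bounded since $y$ is squeezed into $[y_0, y_1]$ and $(\xi,\eta)$ into a bounded domain.

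\textbf{Step 2: Compactness of the trajectory space and semicontinuity of length.} On this compact set of data, a maximizing sequence $\g_n$ of future-directed nonspacelike curves connecting $q_0$ to $q_1$ has, by Filippov's theorem (or directly by Ascoli--Arzelà after the normalization in Step 1 bounds the Lipschitz constants and the parametrization intervals), a subsequence converging uniformly, with controls converging weakly-$*$ in $L^\infty$, to an admissible trajectory $\g_*$ from $q_0$ to $q_1$; admissibility of the limiting control follows from convexity and closedness of $\tilde U$. It remains to check that $\g_*$ is future directed nonspacelike and that $l(\g_*) \geq \limsup l(\g_n)$. The length functional $u \mapsto \int_0^{t_1}\sqrt{u_2^2-u_1^2}\,dt$ is concave in $u$ on $U_1$, hence weakly-$*$ upper semicontinuous, which gives the desired inequality; the nonspacelike, future-directed property passes to the limit because the closed condition $u_2 \geq |u_1|$ is preserved under weak limits into a convex set. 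Therefore $l(\g_*) = d(q_0,q_1)$ and $\g_*$ is the sought maximizer.

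\textbf{Main obstacle.} The only real work is Step 1: handling the noncompactness of $U_1$ cleanly. Because the length vanishes on lightlike directions, a maximizing sequence could a priori "spread mass" onto the lightlike boundary of the cone or run off to large $u_2$; the fix is precisely the reparametrization freedom plus the a priori confinement of trajectories to a compact region coming from the already-proven description $\A_1 = \B_1$ and $J^{\pm}$ in Corollary \ref{cor:J+-}. Once the data are compactified, the existence argument is the routine Filippov/lower-semicontinuity scheme, so I would present Step 1 carefully and merely cite the standard existence theorem (e.g. \cite{notes}) for the remainder, possibly also invoking the general \sL existence results of Grochowski \cite{groch6} for globally hyperbolic-type situations, noting that $\B_1$ being the attainable set supplies the needed causality.
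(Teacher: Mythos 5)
Your proposal is correct in substance and rests on the same two geometric facts as the paper's proof: the a priori time bound coming from $\dot y = u_2$ after normalizing $u_2 \equiv 1$, and the confinement of the causal diamond $J^+(q_0)\cap J^-(q_1)$ in a compact set via the explicit description of the attainable set. The difference is one of packaging: the paper does not redo the Filippov/semicontinuity argument but instead cites a general sub-Lorentzian existence theorem (Theorem 2 of \cite{SL_exist}) whose three hypotheses are exactly your Step 1 ingredients --- $q_1 \in J^+(q_0)$, compactness of $J^+(q_0)\cap J^-(q_1)$, and finiteness of the reparametrized time $T(q_0,q_1)=y_1-y_0$ --- so what you present as Step 2 is the content of the black box. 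Two small corrections to your Step 1. First, arclength parametrization does not compactify the controls: on an arclength-parametrized timelike curve $u_2=\cosh\psi$ is unbounded, and no further reparametrization preserves both arclength and a bound on $u_2$; the clean normalization is $u_2\equiv 1$, which makes $u_1\in[-1,1]$ compact and convex, fixes the time interval to $[0,y_1-y_0]$, and (unlike your timelike/lightlike dichotomy) uniformly covers the maximizers that change causal type along the way, which do occur here (Proposition \ref{prop:opt3}, case 2). Second, Corollary \ref{cor:J+-} gives $J^{\pm}$ explicitly only for basepoints on the Martinet plane $\Pi$; for general $q_0,q_1$ one must sandwich, choosing $p_0,p_1\in\Pi$ with $J^+(q_0)\subset J^+(p_0)$ and $J^-(q_1)\subset J^-(p_1)$ and observing that $J^+(p_0)\cap J^-(p_1)$ is compact --- this is exactly how the paper verifies its condition (2), and your ``$y$ squeezed into $[y_0,y_1]$, $(\xi,\eta)$ bounded'' remark needs this intermediate step to be literal.
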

\begin{proof}
By Theorem 2 and Remark 2 \cite{SL_exist}, the following conditions are sufficient for existence of an optimal trajectory connecting $q_0$ and $q_1$:
\begin{itemize}
\item[$(1)$]
$q_1 \in J^+({q_0})$,
\item[$(2)$]
There exists a compact $K \subset M$ such that $J^+({q_0}) \cap J^-_{q_1} \subset K$, 
\item[$(3)$]
$T(q_0, q_1) < + \infty$,
\end{itemize}
where $T(q_0, q_1)$ is the supremum of time required to reach $q_1$ from $q_0$ for a trajectory of system \eq{pr11} reparametrized so that $u_1 \equiv 1$.

Condition (1) holds by assumption of this theorem.

Condition (3) holds since if $u\equiv 1$ then $\dot y \equiv 1$, thus $T(q_0, q_1) = y_1-y_0 < + \infty$.

Now we prove condition (2). It is easy to see from Cor. \ref{cor:J+-} that
$$
\cup_{q \in \Pi} J^+(q) = \cup_{q \in \Pi} J^-(q) = M.
$$
Thus there exist $p_0, p_1 \in \Pi$ such that $p_0 \in J^+({q_0})$, $q_1 \in J^-({p_1})$. Thus $J^+({q_0}) \subset J^+({p_0})$, $J^-({q_1}) \subset J^-({p_1})$, so $J^+({q_0})\cap J^-({q_1}) \subset J^+({p_0})\cap J^-({p_1})$. It is easy to see from Cor. \ref{cor:J+-} that the set $K:= J^+({p_0})\cap J^-({p_1})$ is compact. So condition (2) above holds, and this theorem is proved.
\end{proof}

\subsection{Optimality of extremal trajectories}
\begin{proposition}\label{prop:max}
Let $\lam = (\p_0, c) \in C$, $c > 0$, $\p_0 \neq 0$, and let $t_1 > 2K/({\text{\em  \ae}} l)$. Then the extremal trajectory $q(t) = \Exp(\lam, t)$, $t \in [0, t_1]$, is not optimal.
\end{proposition}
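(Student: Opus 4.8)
The plan is to show that at parameter value $t=2K/(\ae l)$ the extremal trajectory $q(t)=\Exp(\lam,t)$ reaches the Martinet plane $\Pi = \{x=0\}$, and that beyond this moment it cannot remain a length maximizer. More precisely, by Proposition~\ref{prop:ExpN0} we know $\Exp(N_0^{\pm})\subset M_0$, so $q(2K/(\ae l))\in M_0\subset\{x=0\}$, i.e.\ the point lies strictly inside $\B_1$ but on the Martinet surface. The first step is to record this fact and to compute the value of the \sL distance (equivalently, the length of the extremal up to that time): since the trajectory is arclength parametrized, $l(q(\cdot)|_{[0,\,2K/(\ae l)]}) = 2K/(\ae l)$.

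The second step is to exhibit a strictly longer future directed nonspacelike curve from $q_0$ to the endpoint $q_1 = q(t_1)$ with $t_1 > 2K/(\ae l)$. The idea is that once the normal extremal hits $\Pi$ at $\bar t := 2K/(\ae l)$, one can switch onto a trajectory that first travels inside $\Pi$ — where, by the Remark on abnormal trajectories, timelike motion with $u=(0,1)$ is available and contributes positively to the length — and then leaves $\Pi$ again. Concretely: compare $q(t)$, $t\in[0,t_1]$, against the concatenation of $q(t)|_{[0,\bar t]}$ with an appropriate curve of the second type (controls $(0,1)$ on $\Pi$, then $(\pm1,1)$ off $\Pi$) chosen to hit $q_1$; using the explicit endpoint formulas \eq{zS3}, \eq{zS4} together with Corollary~\ref{cor:J+-}, one checks that such a connecting curve exists and that its length exceeds $t_1$ because the $\Pi$-segment accrues length at rate $1$ while the original extremal, being past its "first return" to $\Pi$, is provably suboptimal. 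Alternatively, and perhaps more cleanly, one argues by conjugate points: by Proposition~\ref{prop:JN0} the exponential map is a local diffeomorphism at $N_0^{\pm}$, but the restriction $\Exp|_{\tN_1}$ covers only $M_1$ (open, not containing $M_0$), so points of $N_0^+$ are boundary points of the diffeomorphic chart; combined with the fact (from the Jacobian analysis in Proposition~\ref{prop:ExpN1}, where $J(\tau)<0$ for $\tau\in(0,2K]$ and $J|_{\tau=2K}<0$ but the $o(1)$ term and the structure of $g(k)$ force a sign change or degeneracy just past $2K$) that a conjugate point occurs at or immediately after $\tau=2K$, standard results in \sL (as in the Lorentzian and sub-Riemannian theory) give that no extremal is optimal past its first conjugate point.

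The third step is to assemble these into the statement: for any $\lam=(\p_0,c)\in C$ with $c>0$, $\p_0\neq 0$, and any $t_1 > \bar t = 2K/(\ae l)$, the curve $q(\cdot)|_{[0,t_1]}$ is not optimal, since a strictly longer curve to $q_1$ exists. By the reflection symmetry \eq{refl} it suffices to treat $\p_0>0$, and by the dilation symmetry \eq{dilat} it suffices to treat $c=1$, exactly as in the proof of Proposition~\ref{prop:ExpN0}; this reduces the verification to a one-parameter family indexed by $k\in(0,1)$.

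The main obstacle is the middle step: making rigorous \emph{why} passing the Martinet plane destroys optimality. The two routes each have a cost. The "comparison curve" route requires an explicit construction of a competitor joining $q_0$ to an arbitrary $q_1$ in the relevant region with demonstrably larger length, which is a genuine (if elementary) computation using \eq{zS3}--\eq{zS4} and the structure of $J^+$, $J^-$ from Corollary~\ref{cor:J+-}. The "conjugate point" route requires either a \sL analogue of the classical theorem that maximizers have no interior conjugate points — which may need to be cited or sketched from \cite{groch4} or the general references — or a direct argument that $\Exp$ fails to be injective past $\tN_1$, e.g.\ by showing two distinct preimages of a nearby point using the $o(1)$ expansion of $J_1$ near $k=0$ together with the established sign of $g(k)$. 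I would pursue the conjugate-point route as primary, falling back on the explicit competitor construction if a clean \sL conjugate-point theorem is not available in the form needed.
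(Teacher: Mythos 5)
Your proposal does not close the argument, and the primary route you chose (conjugate points) rests on a false premise. The paper's own Jacobian computation in Proposition \ref{prop:ExpN1} shows $J(\tau)<0$ for all $\tau\in(0,2K]$, including $\tau=2K$, and Proposition \ref{prop:JN0} states explicitly that $\Exp$ is a local diffeomorphism at the points of $N_0^{\pm}$. So $t=2K/(\aen l)$ is \emph{not} a conjugate time, and your suggestion that the $o(1)$ term and the structure of $g(k)$ ``force a sign change or degeneracy just past $2K$'' is unsupported and contradicts these facts: the loss of optimality here is not a conjugate-point phenomenon. Your fallback route (an explicit longer competitor built from segments on $\Pi$) is not carried out, and as sketched it is circular --- you justify that the competitor is longer by asserting that the original extremal ``is provably suboptimal'' past its return to $\Pi$, which is precisely the statement to be proved.

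The missing idea is a Maxwell point generated by the reflection \eq{refl}. You correctly observed that $q(\bt)\in M_0\subset\Pi$ for $\bt=2K/(\aen l)$, and Proposition \ref{prop:ExpN0} shows that \emph{both} $\Exp|_{N_0^+}$ and $\Exp|_{N_0^-}$ map diffeomorphically onto $M_0$; the conclusion to extract is that the reflected covector $\tlam=(-\p_0,c)$ yields a second, distinct arclength-parametrized normal extremal $\tq(t)=\Exp(\tlam,t)$ with $\tq(\bt)=q(\bt)$ and the same length on $[0,\bt]$. If $q(t)$, $t\in[0,t_1]$, were optimal, the concatenation $\hq$ equal to $\tq$ on $[0,\bt]$ and to $q$ on $[\bt,t_1]$ would have the same length and hence also be optimal; but $\hq$ has a corner at $t=\bt$, which is impossible for a normal maximizer, and $\hq$ cannot be abnormal since its support lies in $\intt\A_1$ rather than in $\partial\A_1$. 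This contradiction is the paper's entire proof; no conjugate-point theory and no competitor construction on $\Pi$ are needed.
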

\begin{proof}
By virtue of the reflection \eq{refl}, we can assume that $\p_0>0$. By contradiction, suppose that $q(t)$, $t \in [0, t_1]$, is optimal. 

Let $\tlam = (-\p_0, c) \in C$, $\tq(t) = \Exp(\tlam, t)$, $t \in [0, t_1]$. Denote $\bt = 2K/(\aen l)$, then $q(\bt) = \tq(\bt)$ and 
$l(\restr{q(\cdot)}{[0, \bt]}) = l(\restr{\tq(\cdot)}{[0, \bt]})$, i.e., the trajectories $q(\cdot)$ and $\tq(\cdot)$ have a Maxwell point at $t = \bt$ \cite{max1}. Now consider the trajectory
$$
\hq(t) = \begin{cases}
\tq(t), \qquad &t \in [0, \bt], \\ 
q(t), \qquad &t \in [\bt, t_1].
\end{cases}
$$
Since $l(\restr{\hq(\cdot)}{[0, t_1]}) = l(\restr{q(\cdot)}{[0, t_1]})$, then the trajectory $\hq(t)$, $t \in [0, t_1]$, is optimal. But $\hq(t)$ has a corner point at $t = \bt$, which is not possible for normal trajectories. Thus $\hq(t)$ is abnormal, so its support belongs to $\partial \A_1$. But this is impossible since the support of the normal trajectory $q(t)$ belongs to $\intt \A_1$. A contradiction obtained completes the proof.
\end{proof}

Define the following function:
\begin{align*}
&\map{\tt}{C}{(0, + \infty]}, \qquad \lam = (\p_0, c) \in C, \\
&c = 0 \then \tt(\lam) = + \infty, \\
&c \neq 0, \ \p_0 = 0 \then \tt(\lam) = + \infty, \\
&c >0, \ \p_0 \neq 0 \then \tt(\lam) = \frac{2kK}{l}, \\
&c <0, \ \p_0 \neq 0 \then \tt(\lam) = \frac{kK}{l}.
\end{align*}
We prove (see Cor. \ref{cor:cut}) that $\tt(\lam)$ is the cut time for an extremal trajectory $\Exp(\lam, t)$:
$$
\tt(\lam) = \tcut(\lam) := \sup \{t_1 > 0 \mid \Exp(\lam, t) \text{ is optimal for } t \in [0, t_1]\}.
$$

\begin{theorem}\label{th:opt1}
Let $\lam \in C$, $t_1 \in (0, \tt(\lam))$. Then the trajectory $q(t) = \Exp(\lam, t)$, $t \in [0, t_1]$, is optimal.
\end{theorem}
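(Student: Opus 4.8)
The plan is to combine the global diffeomorphism results for the exponential mapping (Theorem~\ref{th:ExpN+-} and Propositions~\ref{prop:ExpN0}, \ref{prop:ExpN5}) with the existence theorem (Theorem~\ref{th:exist}) and the necessary optimality conditions of PMP. First I would reduce, via the reflection~\eqref{refl}, the dilations~\eqref{dilat}, and the parallel translations~\eqref{paral}, to the representative cases, so that it suffices to handle an extremal with $\p_0 > 0$ (and, for $c \neq 0$, with $c$ normalized). For $c = 0$ the trajectory~\eqref{c=0} is a straight line in the $(x,y)$-plane lying in the surface $S_3$ or $S_4$ when $\p_0 \ne 0$, hence on $\partial\B_1$; such trajectories are the abnormal ones, and their optimality on all of $[0,t_1]$ (and $\tt(\lam) = +\infty$) should follow from the fact that the corresponding endpoints lie on the boundary of the attainable set $\A_1 = \B_1$ together with the characterization of $\partial\B_1$ as the union $\cup S_i$ — any competitor connecting the same endpoints must stay on $\partial\B_1$ and therefore be one of the computed abnormal arcs of the same length. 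The case $c \neq 0$, $\p_0 = 0$ gives the line $x = z \equiv 0$, $y = t$, again on $\partial\B_1$ (it is the common edge $M_0$-closure / the axis), handled the same way.

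The main case is $\lam \in C$ with $c \neq 0$, $\p_0 \neq 0$ and $t_1 \in (0, \tt(\lam))$, where $\tt(\lam) = 2kK/l$ if $c>0$ and $kK/l$ if $c<0$. Here the key point is that, by construction of $N_+$ (resp. $N_-$) and the stratification of $\intt\B_1$, the restricted exponential map $\map{\Exp}{N_+}{M_+}$ is a real-analytic diffeomorphism onto $M_+ = (\intt\B_1)\cap\{x>0\}$; in particular for each $t_1 < \tt(\lam)$ the point $q_1 = \Exp(\lam, t_1)$ lies in $\intt\B_1$ and is the image of a \emph{unique} $(\lam, t_1) \in N_+$. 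I would then argue as follows: by Theorem~\ref{th:exist} there exists an optimal trajectory $\bar q(\cdot)$ joining $q_0$ to $q_1$; being optimal and joining a point of $\intt\A_1$, it is not contained in $\partial\A_1$, hence (by the classification of abnormals, whose supports lie in $\partial\B_1$) it is a normal extremal, so $\bar q(t) = \Exp(\bar\lam, t)$ for some $\bar\lam \in C$ on a time interval $[0, \bar t_1]$ — after reparametrization by arclength, $\bar t_1$ equals the sub-Lorentzian distance and $\bar t_1 \leq \tt(\bar\lam)$ since beyond $\tt$ the trajectory would leave $\intt\B_1$ (for $c\ne 0$ the domain $N$ already stops at the relevant boundary value, and Proposition~\ref{prop:max} rules out $t_1 > 2K/(\aen l)$ in the $c>0$ case). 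Therefore $(\bar\lam, \bar t_1) \in \cl(N_+)$ maps to $q_1 \in M_+$; by injectivity of $\Exp$ on (the closure of) $N_+$ — using the diffeomorphism property on the interior and continuity up to the boundary strata $N_5$, $N_0^\pm$ — we get $(\bar\lam, \bar t_1) = (\lam, t_1)$, so $q(\cdot) = \Exp(\lam, \cdot)|_{[0,t_1]}$ coincides with the optimal $\bar q(\cdot)$ and is thus optimal.

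A subtle point I would be careful about is exactly why $\bar t_1 \leq \tt(\bar\lam)$, i.e. why an optimal normal trajectory cannot run past the time $\tt$: for $c < 0$ this is forced because the trajectory~(3.2) is only defined for $\tau \in [0, K(k))$ — equivalently $t < kK/l$ — as $x \to \infty$ when $\tau \to K$, so no optimal trajectory reaching a point of $\B_1$ can have parameter time exceeding $kK/l$; for $c > 0$ one invokes Proposition~\ref{prop:max} directly. I also need that the only extremals are the normal ones parametrized by $C$ and the abnormal ones already described, which is exactly the content of the PMP analysis in Subsection~3.3. The remaining work is bookkeeping: checking that the union $N_+ \cup N_- \cup N_5 \cup N_6 \cup N_0^\pm \cup \{c=0\} \cup \{c\ne0, \p_0 = 0\}$ exhausts $C \times (0, \tt(\lam))$ and that the pieces glue consistently, and invoking the boundary/closure arguments (homotopy invariance of the Maslov index from Proposition~\ref{prop:JN0} guarantees no conjugate points up to $\tt$). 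The main obstacle is organizing the injectivity-up-to-the-boundary argument cleanly across all strata so that the unique preimage claim is airtight; everything else is a direct assembly of the already-proved propositions.
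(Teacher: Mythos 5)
Your main case ($c\neq 0$, $\p_0\neq 0$) follows essentially the paper's route: existence from Theorem~\ref{th:exist}, normality because the endpoint lies in $\intt\A_1$, and uniqueness of the arclength-parametrized normal extremal through $q_1$ via the diffeomorphism of Theorem~\ref{th:ExpN+-}; your extra care in invoking Proposition~\ref{prop:max} to exclude normal extremals with $c>0$ and $t_1>2K/(\text{\ae}l)$ that might also pass through $q_1$ is a reasonable (and arguably needed) supplement to the paper's terser wording.

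However, there is a genuine error in your treatment of the case $c=0$, $\p_0\neq 0$. You claim these trajectories lie in $S_3\cup S_4$, hence on $\partial\B_1$, and identify them with abnormal trajectories. This is false: from \eqref{c=0} one gets $z=\tfrac{t^3}{6}\cosh\p_0\sinh^2\p_0 = x^2y/6$, whereas $S_3$ is $z=x^3/6$; these agree only if $\sinh\p_0=\cosh\p_0$, which never holds. In the coordinates \eqref{xieta} one computes $\eta=(\xi^2-1)/24=\f_5(\xi)$ with $\xi=\tanh\p_0\in(-1,1)\setminus\{0\}$, so these endpoints lie on the strata $M_5\cup M_6\subset\intt\B_1$, strictly between $\f_3$ and $\f_1$. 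Moreover these extremals are timelike (length $t_1>0$), not lightlike abnormals, so your argument ``any competitor must stay on $\partial\B_1$ and be one of the computed abnormal arcs'' has a false premise and does not apply. The correct handling is exactly the interior-point argument you use in the main case: since $M_5\cup M_6\subset M_+\cup M_-$ and Theorem~\ref{th:ExpN+-} covers $\tN_5,\tN_6$, uniqueness of the normal extremal through such $q_1$ again forces the optimal trajectory to coincide with $\Exp(\lam,\cdot)$. (Your separate treatment of $c\neq0$, $\p_0=0$, whose endpoint $(0,t_1,0)$ genuinely lies on $\partial\B_1=S_3\cap S_4$, is fine, and in fact patches a small imprecision in the paper's own proof, which asserts $q_1\in\intt\A_1$ for all $\lam\in C$.)
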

\begin{proof}
By Theorem \ref{th:exist}, there exists an optimal trajectory connecting $q_0$ and $q_1 := q(t_1)$. Since $q_1 \in \intt \A_1$, this trajectory is a normal extremal trajectory. Since $q_1 \in M_+\cup M_-$, by Theorem \ref{th:ExpN+-} there exists a unique arclength parametrized normal extremal trajectory connecting $q_0$ and $q_1$, so it coincides with $q(t)$.
\end{proof}

\begin{theorem}\label{th:opt2}
Let $\lam = (\p_0, c)\in C$, $c > 0$, $\p_0 \neq 0$, $t_1 =\tt(\lam)$. Then the trajectory $q(t) = \Exp(\lam, t)$, $t \in [0, t_1]$, is optimal.
\end{theorem}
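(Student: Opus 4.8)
The plan is to show that the trajectory $q(t) = \Exp(\lam,t)$, $t \in [0,\tt(\lam)]$, with $\lam = (\p_0,c)$, $c>0$, $\p_0 \neq 0$, is optimal by a limiting argument from the interior case already settled in Theorem~\ref{th:opt1}. The endpoint $q_1 := q(\tt(\lam))$ lies on the stratum $M_0 \subset \intt \B_1 = \intt \A_1$, and by Proposition~\ref{prop:ExpN0} the restriction $\map{\Exp}{N_0^{\pm}}{M_0}$ is a real-analytic diffeomorphism, so among arclength-parametrized normal extremals $q_1$ is reached (from the two cones $\p_0 > 0$ and $\p_0 < 0$) at exactly the two parameter points of $N_0^{\pm}$, at time exactly $\bt := \tt(\lam) = 2K/(\aen l)$.

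First I would invoke Theorem~\ref{th:exist}: since $q_1 \in J^+(q_0)$, there exists an optimal trajectory $\bq(\cdot)$ connecting $q_0$ and $q_1$. Because $q_1 \in \intt \A_1$, the support of $\bq(\cdot)$ cannot touch $\partial \A_1$, so $\bq(\cdot)$ cannot be abnormal (abnormal trajectories have support in $\partial \A_1$); hence $\bq(\cdot)$ is a normal extremal trajectory, $\bq(t) = \Exp(\bar\lam, s(t))$ for some $\bar\lam \in C$, up to reparametrization, and we may take it arclength-parametrized on $[0, \bar t_1]$ with $\bar t_1 = l(\bq(\cdot)) = d(q_0, q_1)$. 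The second step is to pin down $\bar\lam$. Since $q_1 \in M_0$ and $M_0 = \Exp(N_0^+) = \Exp(N_0^-)$ with the two maps being diffeomorphisms onto $M_0$, the only arclength-parametrized normal extremals hitting $q_1$ are the two trajectories with covector $\lam = (\p_0,c)$ or its reflection $(-\p_0,c)$, each reaching $q_1$ precisely at time $\bt$. A normal extremal with $c>0$, $\p_0\neq 0$ cannot reach $q_1$ at any time $t_1' > \bt$ while staying optimal, by Proposition~\ref{prop:max}; and it cannot reach $q_1$ at a time $t_1' < \bt$ at all, because for $t < \bt$ we have $\Exp(\lam,t) \in M_+\cup M_-$ (strata with $x \neq 0$) whereas $q_1$ has $x=0$. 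Likewise a normal extremal with $c \leq 0$ or $\p_0 = 0$ has $x \equiv 0$ only in the degenerate family $\p_0=0$, which reaches only points with $x=z=0$, $y=t$, not matching $q_1$ (whose $\eta$-coordinate is strictly negative); and for $c<0$, $\p_0\neq 0$ the parameter $t$ ranges only over $(0,kK/l)$, and $\Exp$ on that range lands in $M_3\cup M_4$, again with $x\neq 0$. Hence the optimal $\bq(\cdot)$ must be, up to the reflection~\eq{refl}, the trajectory $q(t) = \Exp(\lam,t)$, $t\in[0,\bt]$, and in particular $d(q_0,q_1) = \bt = l(q(\cdot)|_{[0,\bt]})$, so $q(\cdot)$ is optimal.

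The step I expect to be the main obstacle is the clean identification in the previous paragraph that the optimal trajectory has covector exactly $\lam$ (or its reflection) and reaches $q_1$ at time exactly $\bt$ — i.e., ruling out that some other extremal reaches the boundary point $q_1 \in M_0$ of the stratification at a strictly smaller time along a different stratum, or reaches it along the same family at a larger time while remaining optimal. This is handled by combining the stratification $\intt\B_1 = \cup_{i=0}^6 M_i$ with the fibered diffeomorphism statements (Theorem~\ref{th:ExpN+-}, Propositions~\ref{prop:ExpN1}--\ref{prop:ExpN0}): since $q_1\in M_0$ and the $M_i$ are disjoint, any normal extremal through $q_1$ at its terminal time must be one of the two maps in Proposition~\ref{prop:ExpN0}, and Proposition~\ref{prop:max} forecloses the ``overshoot'' direction. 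One should also note that $\hq$-type concatenation arguments as in Proposition~\ref{prop:max} show that $\bt$ is a Maxwell point (the trajectories with $\p_0>0$ and $\p_0<0$ meet at $t=\bt$ with equal length), which is consistent with optimality up to and including $t = \bt$ but not beyond; this gives the matching lower bound and completes the identification $\tt(\lam) = \tcut(\lam)$ at this endpoint, to be recorded in Corollary~\ref{cor:cut}.
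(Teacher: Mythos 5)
Your proposal is correct and follows essentially the same route as the paper: existence from Theorem~\ref{th:exist}, normality of the optimizer because $q_1 \in \intt \A_1$, and identification of it with $\Exp((\pm\p_0,c),\cdot)$ via the diffeomorphisms of Proposition~\ref{prop:ExpN0} together with the reflection \eq{refl} and the exclusion of overshooting by Proposition~\ref{prop:max} --- the paper merely compresses your case analysis into ``similarly to Th.~\ref{th:opt1}.'' One cosmetic remark: your parenthetical reason for excluding the family $\p_0=0$ (that $q_1$ has ``$\eta$ strictly negative'') is not the right discriminator, since that degenerate family also lands at $\eta=-1/24<0$; the correct one is that $q_1\in M_0$ has $\eta>-1/24$, i.e.\ $z>0$, whereas the $\p_0=0$ trajectories have $z\equiv 0$.
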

\begin{proof}
Similarly to Th. \ref{th:opt1} with the only distinction that now there are exactly two optimal trajectories corresponding to the covectors $(\pm \p_0, c) \in C$, these trajectories are symmetric by virtue of the reflection \eq{refl}. 
\end{proof}

Proposition \ref{prop:max} and 
Theorems \ref{th:opt1}, \ref{th:opt2} imply the following.

\begin{corollary}\label{cor:cut}
For any $\lam \in C$ we have $\tcut(\lam) = \tt(\lam)$.
\end{corollary}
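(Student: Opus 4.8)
The plan is to assemble the three already-established facts into a characterization of the cut time for every covector $\lam \in C$, splitting according to the sign of $c$ and the value of $\p_0$. Recall that by definition $\tcut(\lam) = \sup\{t_1 > 0 \mid \Exp(\lam, t) \text{ is optimal on } [0, t_1]\}$, and that optimality is monotone in $t_1$ in the sense that if $\Exp(\lam, \cdot)$ is optimal on $[0, t_1]$ then it is optimal on every $[0, t]$ with $t \le t_1$ (a subarc of a maximizer is a maximizer, using the reparametrization remark and the fact that concatenating with any future directed nonspacelike curve keeps us in the admissible class). Hence it suffices to show, for each fixed $\lam$, that $\Exp(\lam, t)$ is optimal for all $t < \tt(\lam)$ and non-optimal for all $t > \tt(\lam)$; the supremum is then exactly $\tt(\lam)$.

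First I would dispose of the ``easy'' covectors: if $c = 0$, or if $c \ne 0$ and $\p_0 = 0$, then $\tt(\lam) = +\infty$, and Theorem \ref{th:opt1} (applied with $t_1 \in (0, \tt(\lam)) = (0, +\infty)$ arbitrary) gives optimality on every interval $[0, t_1]$; there is no upper constraint to check, so $\tcut(\lam) = +\infty = \tt(\lam)$. Next, the case $c < 0$, $\p_0 \ne 0$: here $\tt(\lam) = kK/l$, which is exactly the right endpoint of the parameter interval on which the extremal $\Exp(\lam, \cdot)$ is defined (the domain $N$ restricts $t \in (0, kK/l)$ for $c<0$). Theorem \ref{th:opt1} yields optimality for all $t_1 \in (0, kK/l)$, and since the trajectory is not defined (as a normal extremal of the given parametrization) beyond $t = kK/l$, the supremum in the definition of $\tcut$ is $kK/l = \tt(\lam)$. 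Here I should remark that one must check that $\tt(\lam)$ is itself attained or is a genuine supremum — but since optimality holds on $[0, t_1]$ for every $t_1 < kK/l$ and the sup is over such $t_1$, we get $\tcut(\lam) \ge kK/l$, and it cannot exceed $kK/l$ because the extremal parametrization degenerates there; I would phrase this carefully using the structure of $N$.

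The main case, and the one requiring all three cited results, is $c > 0$, $\p_0 \ne 0$, where $\tt(\lam) = 2K/(\aen l) = 2kK/l$ (note $k' = 1/\aen$, so $2K/(\aen l) = 2k'K/l$; I would double-check this normalization against the definition of $\tt$, which writes $2kK/l$ — reconciling the two expressions for $m$, $k$, $\aen$ from Subsec.~\ref{subsec:norm_extr} is the one bookkeeping point to get right). For $t_1 < \tt(\lam)$, optimality is Theorem \ref{th:opt1}. For $t_1 = \tt(\lam)$, optimality is Theorem \ref{th:opt2}. For $t_1 > \tt(\lam)$, non-optimality is Proposition \ref{prop:max}. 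Combining: the set $\{t_1 > 0 \mid \Exp(\lam, t)\text{ optimal on }[0,t_1]\}$ equals $(0, \tt(\lam)]$, whose supremum is $\tt(\lam)$, so $\tcut(\lam) = \tt(\lam)$.

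I expect the only real obstacle to be notational: making sure the value $\tt(\lam)$ in the case $c>0$ matches the endpoint $2K/(\aen l)$ used in Proposition \ref{prop:max} and Theorem \ref{th:opt2}, i.e. verifying $2kK/l = 2K/(\aen l)$ under the substitutions $k = \sqrt{(E-1)/(E+1)}$, $k' = \sqrt{1-k^2} = 1/\aen$, $m = lk'$ of Subsec.~\ref{subsec:norm_extr} — there appears to be a mild clash between ``$\tt(\lam) = 2kK/l$'' in the definition and ``$t = 2K/(\aen l)$'' elsewhere, which I would resolve (most likely the intended common value is $2k'K/l = 2K/(\aen l)$, and the modulus of the elliptic integral $K$ is $k$ throughout). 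Once that identification is pinned down, the corollary is an immediate logical consequence of Proposition \ref{prop:max} and Theorems \ref{th:opt1} and \ref{th:opt2}, together with the trivial monotonicity of the optimality interval.
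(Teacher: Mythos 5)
Your proposal is correct and takes essentially the same route as the paper, which derives the corollary directly from Proposition \ref{prop:max} and Theorems \ref{th:opt1}, \ref{th:opt2} (the paper gives no further argument beyond citing these three results). Your side remark about reconciling $2kK/l$ with $2K/(\text{\ae}\, l)=2k'K/l$ correctly flags a genuine notational inconsistency in the paper, but it does not affect the logic of the proof.
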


\begin{proposition}\label{prop:opt3}
Any abnormal extremal trajectory is optimal.
\end{proposition}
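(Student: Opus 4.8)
The plan is to exploit the explicit description of the abnormal trajectories already obtained: up to reparametrization they are the four one-switching trajectories of Subsection \ref{subsec:invar_set}, together with their initial non-switched pieces (the pure $u=(\pm1,1)$ and $u=(0,1)$ trajectories), and they sweep out exactly the boundary surface $\cup_{i=1}^4 S_i = \partial\B_1 = \partial\A_1$. So the claim reduces to: any future directed nonspacelike curve from $q_0$ to a point $q_1 \in \partial\A_1$ has \sL length not exceeding that of the abnormal trajectory reaching $q_1$, and this bound is attained by the abnormal trajectory itself.

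First I would compute the \sL length of the abnormal trajectories. Along a segment with control $(\pm1,1)$ one has $u_2^2 - u_1^2 = 0$, so these segments are lightlike and contribute zero length; along a segment with control $(0,1)$ (forced to lie in $\Pi$) one has $u_2^2 - u_1^2 = 1$, contributing length equal to its time duration. Hence an abnormal trajectory of types 1) or 2) has total \sL length $0$, while one of types 3) or 4) has \sL length $t_1$ (the time spent on $\Pi$ before leaving it). The second step is the key one: I must show no competing curve does better. For endpoints $q_1 \in S_1 \cup S_2$ (the parts of $\partial\A_1$ reached by purely lightlike abnormals), I claim $d(q_0,q_1) = 0$, i.e. there is no timelike curve from $q_0$ to such a $q_1$; equivalently $q_1 \notin \intt J^+(q_0) = \intt\B_1$, which is immediate since $q_1 \in \partial\B_1$, combined with the standard \sL fact that a point joinable to $q_0$ by a timelike curve lies in the interior of the causal future (this uses openness of the timelike future, which follows for our real-analytic flat structure from the diffeomorphism property of $\Exp$ onto $\intt\B_1$ established in Theorem \ref{th:ExpN+-} and Proposition \ref{prop:ExpN0}). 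For endpoints $q_1 \in S_3 \cup S_4$, reached by an abnormal that spends time $\bar t$ on $\Pi$ and then time $t_1 - \bar t$ moving lightlike along $\{z = \pm x^3/6\}$: any other nonspacelike curve $\gamma$ from $q_0$ to $q_1$ must first move away from $\Pi$ (since its $x$-coordinate ends nonzero), and I would argue via Corollary \ref{cor:J+-} that the sub-arc of $\gamma$ lying in $\{x=0\}$ has duration at most $\bar t$ and the remaining sub-arc, lying in $\intt\A_1 \cup \partial\A_1$, is either lightlike (length $0$) or, if timelike on a set of positive measure, would force the endpoint into $\intt\B_1$, a contradiction; so $l(\gamma) \le \bar t = l(\text{abnormal})$.

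The main obstacle I anticipate is the last estimate: controlling the length of a \emph{general} nonspacelike competitor near the boundary stratum $S_3 \cup S_4$, where the abnormal has positive length. The clean way around it is to reparametrize all competitors by $u_1 \equiv 1$ off $\Pi$ (as in the proof of Theorem \ref{th:exist}) and track the coordinate $y$, which then satisfies $\dot y = u_2 \ge 1$ with equality only along lightlike directions, while on $\Pi$ the admissible controls force $\dot y = u_2 \ge 1$ as well; a careful bookkeeping of where $\dot y = 1$ versus $\dot y > 1$, combined with the shape of $\B_1$ in the homogeneous coordinates $(\xi,\eta)$ from \eq{xieta}, pins the length of any competitor down to at most the time its projection spends on $\Pi$, which in turn is bounded by $\bar t$ because leaving $\Pi$ and returning is obstructed by the invariance of $\B_1$ (Proposition \ref{propos:AinB1}) together with the explicit boundary equations \eq{zS1}--\eq{zS4}. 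I would then conclude that every abnormal extremal trajectory realizes the supremum in \eq{d} between its endpoints, hence is optimal.
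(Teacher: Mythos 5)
Your route is genuinely different from the paper's, and the step you yourself flag as ``the main obstacle'' is a real gap rather than a technicality. The paper's proof is essentially one line: since $q(t_1)\in\partial\A_1$, \emph{every} admissible trajectory $\tq$ with $\tq(\tilde{t}_1)=q(t_1)$ has its endpoint on the boundary of the attainable set, hence satisfies the geometric free-time PMP of Subsec.~\ref{subsec:invar_set}, hence its control is of type a) or b) with at most one switching; matching the endpoint then forces $\tq\equiv q$ up to reparametrization. There are no competitors at all, and optimality is immediate in both cases (length $0$ on $S_1\cup S_2$, length $\tau_1$ on $S_3\cup S_4$). You never invoke this uniqueness and instead try to bound the length of an arbitrary nonspacelike competitor directly.

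That direct bound is where your argument breaks. The assertion that the length of a competitor is ``at most the time its projection spends on $\Pi$'' is false for general nonspacelike curves: a timelike normal geodesic accumulates all of its length off $\Pi$. The assertion becomes true only for curves ending on $\partial\A_1$, and only after one knows that such curves are lightlike a.e.\ off $\Pi$ --- which is precisely the content of the PMP classification you are trying to bypass. Your proposed substitute, that being timelike on a set of positive measure forces the endpoint into $\intt\B_1$, is a sub-Lorentzian push-up lemma; it is plausible, but it is not proved in the paper, and the diffeomorphism properties of $\Exp$ (Th.~\ref{th:ExpN+-}, Propos.~\ref{prop:ExpN0}) do not yield it, since a general nonspacelike competitor is not a normal geodesic. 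Likewise, ``leaving $\Pi$ and returning is obstructed by the invariance of $\B_1$'' is incorrect: normal extremals with $c>0$ leave $\Pi$ and return to it at $\tau=2K$. The one piece of your bookkeeping that does work is the estimate on the portion of a competitor lying in $\{x=0\}$ (there $u_1=0$ a.e., so that portion contributes at most $\int u_2\,dt$, which is controlled by the endpoint coordinates); but without the lightlike-off-$\Pi$ statement the proof does not close. Replacing your second step by the paper's uniqueness-of-boundary-trajectories argument repairs everything at once.
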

\begin{proof}
Abnormal extremal trajectories are exactly trajectories belonging to the boundary of the attainable set $\A_1$. If $q(t)$, $t \in [0, t_1]$, is an abnormal trajectory, then, up to reparametrization, the corresponding control has the following form:
\begin{align*}
&1) \ u(t) = 
\begin{cases}
(\pm 1, 1), & t \in [0, \tau_1], \\
(\mp 1, 1), & t \in [\tau_1, t_1],
\end{cases}
\quad \tau_1 \in [0, t_1], \text{ or}\\ 
&2) \ u(t) = 
\begin{cases}
(0, 1), & t \in [0, \tau_1], \\
(\pm 1, 1), & t \in [\tau_1, t_1],
\end{cases}
\quad \tau_1 \in [0, t_1]. 
\end{align*} 
In the case 1) we have $l(q(\cdot)) = 0$. If $\tq(t)$, $t \in [0, \tilde{t}_1]$, is a trajectory such that $\tq(\tilde{t}_1) = q(t_1)$, then, up to reparametrization, $\tq(t) \equiv q(t)$, thus $q(t)$ is optimal.

In the case 2) we have $l(q(\cdot)) = \tau_1$, and a similar argument shows that $q(t)$ is optimal.
\end{proof}

Theorems \ref{th:opt1}, \ref{th:opt2} and Propos. \ref{prop:opt3} yield the following description of the optimal synthesis.

\begin{theorem}\label{th:synth}
\begin{itemize}
\item[$(1)$]
Let $q_1 \in (\intt \A_1) \setminus \Pi$. Then there exists a unique optimal trajectory $\Exp(\lam, t)$, $t \in [0, t_1]$, where $(\lam, t_1) = \Exp^{-1}(q_1)$. 
\item[$(2)$]
Let $q_1 \in (\intt \A_1) \cap \Pi$. Then there exist exactly two optimal trajectories $\Exp(\lam_i, t)$, $t \in [0, t_1]$, $i = 1, 2$,
where $\{(\lam_1, t_1), (\lam_2, t_1)\} = \Exp^{-1}(q_1)$. 
\item[$(3)$]
Let $q_1 = (x_1, y_1, z_1) \in S_1 \cup S_1$, $\sgn x_1 = \pm 1$. Then there exists a unique optimal trajectory
$$
q(t) = \begin{cases}
e^{t(\pm X_1+X_2)}(q_0), \qquad &t \in [0, \tau_1], \\
e^{(t-\tau_1)(\mp X_1+X_2)} \circ e^{\tau_1(\pm X_1+X_2)}(q_0), \qquad &t \in [\tau_1, t_1], 
\end{cases}
$$
$\tau_1 = (y_1\pm x_1)/2$, $t_1 = y_1$.
\item[$(4)$]
Let $q_1 = (x_1, y_1, z_1) \in S_3 \cup S_4$, $\sgn x_1 = \pm 1$. Then there exists a unique optimal trajectory
$$
q(t) = \begin{cases}
e^{t X_2}(q_0), \qquad &t \in [0, \tau_1], \\
e^{(t-\tau_1)(\pm X_1+X_2)} \circ e^{\tau_1 X_2}(q_0), \qquad &t \in [\tau_1, t_1], 
\end{cases}
$$
$\tau_1 = y_1\pm x_1$, $t_1 = y_1$.
\item[$(5)$]
Let $q_1 = (0, y_1, 0)$, $y_1>0$. Then there exists a unique optimal trajectory
$
q(t) =  
e^{t X_2}(q_0)$, $t \in [0, t_1]$,
 $t_1 = y_1$.
\item[$(6)$]
Let $q_1 = (0, y_1, z_1)$, $z_1 = y_1^3/24>0$. Then there exist exactly two optimal trajectories
\begin{align*}
&q^1(t) = \begin{cases}
e^{t(X_1+X_2)}(q_0), \qquad &t \in [0, \tau_1], \\
e^{(t-\tau_1)(-X_1+X_2)} \circ e^{\tau_1(X_1+X_2)}(q_0), \qquad &t \in [\tau_1, t_1], 
\end{cases}\\
&q^2(t) = \begin{cases}
e^{t(-X_1+X_2)}(q_0), \qquad &t \in [0, \tau_1], \\
e^{(t-\tau_1)(X_1+X_2)} \circ e^{\tau_1(-X_1+X_2)}(q_0), \qquad &t \in [\tau_1, t_1], 
\end{cases}
\end{align*}
$\tau_1 = t_1/2$, $t_1 = y_1$.
\item[$(7)$]
Let $q_1 = (x_1, y_1, z_1)$, $\sgn x_1 = \pm 1$, $y_1 = \pm x_1$, $z_1 = y_1^3/6$. Then there exists a unique optimal trajectory
$
q(t) =  
e^{t (\pm X_1 + X_2)}(q_0)$, $t \in [0, t_1]$,
 $t_1 = y_1$.
\end{itemize}
\end{theorem}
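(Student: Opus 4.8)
\emph{Proof proposal.} The plan is to deduce Theorem~\ref{th:synth} from the results already established, distinguishing the case $q_1\in\intt\A_1$ (items (1)--(2)) from the case $q_1\in\partial\A_1$ (items (3)--(7)). In either case Theorem~\ref{th:exist} supplies \emph{some} optimal trajectory $\g$ from $q_0$ to $q_1$, and the whole task is to identify $\g$ and to count the optimal trajectories.

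Suppose first $q_1\in\intt\A_1$. Then $\g$ cannot be abnormal, since by Proposition~\ref{prop:opt3} the support of any abnormal trajectory lies in $\partial\A_1$; hence $\g$ is a normal extremal and, being timelike, may be reparametrized by arclength as in Subsec.~\ref{subsec:norm_extr}, so $\g(t)=\Exp(\lam,t)$, $t\in[0,t_1]$, for some $(\lam,t_1)$. Optimality together with Corollary~\ref{cor:cut} gives $t_1\le\tt(\lam)$, and $\p_0\ne0$, because the covectors with $\p_0=0$ generate the line $x\equiv0$, which reaches only $\partial\A_1$. If moreover $q_1\notin\Pi$, then $t_1<\tt(\lam)$: equality would force $\tt(\lam)$ finite, hence $c\ne0$; the subcase $c<0$ sends $\Exp(\lam,t)$ to infinity as $t\to\tt(\lam)$, while for $c>0$ one has $\Exp(\lam,\tt(\lam))\in M_0\subset\Pi$ by Proposition~\ref{prop:ExpN0} --- both contradictions. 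Thus $(\lam,t_1)\in N_+\cup N_-$, and since $\map{\Exp}{N_\pm}{M_\pm}$ are diffeomorphisms onto the disjoint sets $M_+=(\intt\B_1)\cap\{x>0\}$ and $M_-=(\intt\B_1)\cap\{x<0\}$ (Theorem~\ref{th:ExpN+-}), the pair $(\lam,t_1)$ is unique; Theorem~\ref{th:opt1} guarantees that this $\Exp(\lam,\cdot)$ is optimal. This proves item (1). If instead $q_1\in\Pi\cap\intt\A_1=M_0$, the same dichotomy forces $t_1=\tt(\lam)$ with $c>0$, i.e. $(\lam,t_1)\in N_0^+\cup N_0^-$; Proposition~\ref{prop:ExpN0} yields exactly one preimage in each of $N_0^\pm$, interchanged by the reflection~\eq{refl}, and Theorem~\ref{th:opt2} shows both trajectories are optimal. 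This proves item (2).

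Suppose now $q_1\in\partial\A_1=\cup_{i=1}^{4}S_i$. By Proposition~\ref{prop:opt3} every optimal trajectory to $q_1$ is abnormal, hence, up to reparametrization, is one of the one-switch bang-bang controls: type 1), $(\pm1,1)$ followed by $(\mp1,1)$, or type 2), $(0,1)$ followed by $(\pm1,1)$. From the endpoint computations of Subsec.~\ref{subsec:invar_set}, type-1) controls (with initial sign equal to $\sgn x_1$) sweep out exactly $S_1\cup S_2$ and type-2) controls sweep out exactly $S_3\cup S_4$; so the surface containing $q_1$ together with $\sgn x_1$ determines the control type and both legs, while $t_1=y_1$ and the switching time $\tau_1$ are recovered by inverting the explicit formulas of Subsec.~\ref{subsec:invar_set}. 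This produces the trajectories of items (3)--(7). For the count, the surfaces $S_i$ overlap only along $S_1\cap S_2=\{x=0,\ z=y^3/24\}$, where both a $(+1,1)/(-1,1)$ and a $(-1,1)/(+1,1)$ bang-bang reach $q_1$ (item (6), the two trajectories related by~\eq{refl}); along $S_1\cap S_3$ and $S_2\cap S_4$, lying over $\xi=\pm1$, i.e. $y_1=\pm x_1$, where the second leg of the bang-bang has zero length and $\g$ degenerates to the single line $e^{t(\pm X_1+X_2)}$ (item (7)); and along $S_3\cap S_4=\{x=0,\ z=0\}$, lying over $\eta=-1/24$, where the second leg of the type-2) trajectory has zero length, leaving $e^{tX_2}$ (item (5)). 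Off these loci each $S_i$ is injectively parametrized by $q_1\mapsto(\tau_1,t_1)$, which gives the uniqueness in items (3)--(4).

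The routine parts are the quotations of Theorems~\ref{th:exist}, \ref{th:ExpN+-}, \ref{th:opt1}, \ref{th:opt2}, Corollary~\ref{cor:cut}, and Propositions~\ref{prop:ExpN0}, \ref{prop:opt3}. The main obstacle is the planar bookkeeping behind items (3)--(7): verifying which one-switch abnormal trajectory reaches each stratum of $\partial\B_1$ and that the surfaces $S_i$ intersect only along the three loci above --- in particular that the cubics defining $S_1$ and $S_3$ coincide only over $\xi=1$ --- which follows from the explicit functions $\f_1,\dots,\f_4$ of Subsec.~\ref{subsec:invar_set}.
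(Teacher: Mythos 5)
Your proposal is correct and takes essentially the same route as the paper: the paper's proof of Theorem~\ref{th:synth} is a one-line assembly of Theorems~\ref{th:opt1}, \ref{th:opt2} and Proposition~\ref{prop:opt3}, and you simply make that assembly (together with Theorems~\ref{th:exist}, \ref{th:ExpN+-}, Proposition~\ref{prop:ExpN0} and Corollary~\ref{cor:cut}) explicit, including the bookkeeping on $\partial\A_1$. The only slip is a reversed citation in the boundary case: the fact that every trajectory reaching $q_1\in\partial\A_1$ is, up to reparametrization, a one-switch abnormal comes from the geometric form of PMP in Subsec.~\ref{subsec:invar_set} (which is what the proof of Proposition~\ref{prop:opt3} actually relies on), not from the statement of Proposition~\ref{prop:opt3} itself, which asserts the converse implication.
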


\begin{remark}
In Theorem $\ref{th:synth}$ existence of exactly one (two) optimal trajectories is understood up to reparametrization.
\end{remark}

\subsection{\SL distance}
In this subsection we study the function $d(q) := d(q_0, q)$, $q \in M$.

\begin{theorem}\label{th:dist}
\begin{itemize}
\item[$(1)$]
The distance $d$ is real-analytic on $(\intt \A_1) \setminus \Pi$ and continuous on $\intt \A_1$. 
\item[$(2)$]
The distance $d$ has discontinuity of the first kind at each point of $S_3 \cup S_4$.
\item[$(3)$]
The restriction $\restr{d}{\Pi}$ is real-analytic on the set $\{x=0, \ z \in (0, y^3/24), \ y> 0\}$ and discontinuous of the first kind on the set $\{x=0, \ z = y^3/24, \ y> 0\}$.
\item[$(4)$]
The distance $d$ is homogeneous of order $1$ w.r.t. dilations \eq{dilat}:
$$
d(\d_{\a}(q)) = \a \d(q), \qquad \d_{\a}(x, y, z) = (\a x, \a y, \a^3 z), \qquad \a > 0, \quad q = (x, y, z) \in M.
$$
\end{itemize}
\end{theorem}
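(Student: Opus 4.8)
The proof will combine the optimal synthesis (Theorem~\ref{th:synth}), the diffeomorphism properties of the exponential mapping (Theorem~\ref{th:ExpN+-} and Propositions~\ref{prop:ExpN0}, \ref{prop:JN0}), and the explicit parametrizations of Subsec.~\ref{subsec:norm_extr}. The underlying observation is that in the chosen parametrizations the length of an optimal trajectory equals the total duration of its timelike portion: hence on $(\intt\A_1)\setminus\Pi$ one has $d(q)=t_1$ with $(\lam,t_1)=\Exp^{-1}(q)$; on $M_0$ one has $d(q)=\tt(\lam)$ with $(\lam,\tt(\lam))\in\Exp^{-1}(q)\cap N_0^{\pm}$; and on the abnormal boundary strata $S_1,\dots,S_4$ the value of $d$ is read off from Theorem~\ref{th:synth}(3)--(7). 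I would prove statement~$(4)$ first, as it is immediate: $\delta_\a$ is the projection to $M$ of the symmetry \eq{dilat}, which scales the time parameter of every extremal by $\a$ and hence scales the length of every arclength-parametrized timelike trajectory by $\a$ while preserving lightlike ones; taking suprema over $\Omega_{q_0 q}$ gives $d(\delta_\a q)=\a\,d(q)$. This also reduces the analysis to the restriction of $d$ to $\{y=1\}$, coordinatized by $(\xi,\eta)$.

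For statement~$(1)$: on $M_+\cup M_-=(\intt\A_1)\setminus\Pi$ the optimal trajectory is unique (Theorem~\ref{th:synth}(1)), so $d=t_1\circ\Exp^{-1}$, which is real-analytic since $\map{\Exp}{N_\pm}{M_\pm}$ is a real-analytic diffeomorphism (Theorem~\ref{th:ExpN+-}). On $M_0$, Theorem~\ref{th:synth}(2) together with Proposition~\ref{prop:ExpN0} gives $d=\tt\circ\Exp^{-1}$ via the diffeomorphism $\map{\Exp}{N_0^{+}}{M_0}$ (the second preimage $(-\p_0,c)$ gives the same value by \eq{refl}), and the explicit formula of Proposition~\ref{prop:ExpN0} writes $\restr d{M_0}$ as a real-analytic function of $k$ composed with the real-analytic diffeomorphism $\eta_1^{-1}$, so $\restr d{M_0}$ is real-analytic, in particular continuous. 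It remains to prove continuity of $d$ across $M_0$. Since $N_0^{+}\subset\cl(\tN_1)$ and, by Proposition~\ref{prop:JN0}, $\Exp$ is a local diffeomorphism at each point of $N_0^{\pm}$, the mapping $\map{\Exp}{N_+\cup N_0^+}{M_+\cup M_0}$ is a continuous bijection which is moreover proper (by the same exhaustion-of-cases argument as in the proof of Proposition~\ref{prop:ExpN1}, now with the boundary stratum $N_0^+$ adjoined), hence a homeomorphism; therefore $d=t_1\circ\Exp^{-1}$ is continuous on $M_+\cup M_0$, and likewise on $M_-\cup M_0$. As the two determinations agree on $M_0$, $d$ is continuous on $\intt\A_1$.

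For statements~$(2)$ and $(3)$: by Theorem~\ref{th:synth}(4), $d\equiv y_1-|x_1|$ on $S_3\cup S_4$; by Theorem~\ref{th:synth}(3),(6) and Proposition~\ref{prop:opt3}, $d\equiv 0$ on $S_1\cup S_2$; and, combining Proposition~\ref{prop:ExpN0} with homogeneity, $\restr d\Pi$ on $\{x=0,\ z\in(0,y^3/24),\ y>0\}$ equals a real-analytic function of $k$ composed with $\eta_1^{-1}$, hence is real-analytic. The discontinuities follow by passing to one-sided limits. Approaching a point $q^*\in S_3$ (resp.\ $S_4$) through the adjacent stratum $M_3$ (resp.\ $M_4$), which is parametrized by the case $c<0$ via Proposition~\ref{prop:ExpN3}, I would compute the limit of $d/y=t_1/y$ along the appropriate boundary stratum of $\tN_3$ (resp.\ $\tN_4$), using the asymptotics of $\sn,\cn,\dn$ and $\E$, and verify that it differs from the value $(y_1-|x_1|)/y_1=1-|\xi|$ taken by $d/y$ on $S_3\cup S_4$; this forces a jump of $d$ along $S_3\cup S_4$. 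Similarly, approaching $\{x=0,\ z=y^3/24,\ y>0\}$ within $\Pi$ amounts to letting $k\to1$ in the formula of Proposition~\ref{prop:ExpN0}, and the resulting limit of $\restr d\Pi$ differs from the boundary value $0$, giving statement~$(3)$. Since the relevant one-sided limits exist, the discontinuities are of the first kind.

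The routine parts of the argument are the bookkeeping with the synthesis and statement~$(4)$. The main work, and the point I expect to be the main obstacle, is twofold: the boundary-limit computations for the exponential mapping in elliptic coordinates required for $(2)$ and $(3)$ --- asymptotics of $\sn,\cn,\dn,\E$ and of the complete integrals $K,E$ at the endpoints of the parameter ranges, together with the correct matching of each face of $\tN$ with the abnormal stratum it is carried onto --- and, for $(1)$, the properness argument at the cut locus $M_0$, where the exponential mapping is two-to-one and two distinct optimal trajectories coalesce.
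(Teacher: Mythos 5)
Your treatment of items (1) and (4) is sound and essentially coincides with the paper's argument (the paper derives (1) from Theorem~\ref{th:ExpN+-} and Proposition~\ref{prop:JN0} and (4) from the dilation symmetry \eq{dilat}, just more tersely than you do). The genuine gap is in your plan for items (2) and (3). You propose to exhibit the jump at a point $q^*\in S_3\cup S_4$ by computing the limit of $d$ as $q\to q^*$ from the adjacent interior stratum and ``verifying that it differs from'' the boundary value $y_1-|x_1|$. But these two quantities are equal: the restriction $\restr{d}{\A_1}$ is continuous (the paper proves this as a separate proposition at the end of Section~3), so the limit of $d$ from inside $\A_1$ at any point of $\partial\A_1$ coincides with the value of $d$ there. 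Carried out, your computation would therefore exhibit \emph{no} discontinuity. The discontinuity asserted in (2) comes entirely from the other side of $\partial\A_1$: since $S_3\cup S_4\subset\partial\A_1$, arbitrarily close to $q^*$ there are points $q\notin\A_1$, where $d(q)=0$ by definition \eq{d}, while $d(q^*)=\tau_1>0$. That one-line comparison is the paper's entire proof of (2); your proposal never invokes the vanishing of $d$ outside the attainable set, which is the essential ingredient. In particular, the elliptic-function asymptotics you single out as the main obstacle are not needed for (2) or (3) at all.

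The same criticism applies to (3), with an additional twist. Letting $k\to1$ in Proposition~\ref{prop:ExpN0} and normalizing $y=1$ by \eq{dilat} gives $d=K(k)k'^2/(2E(k)-k'^2K(k))\to 0$, which \emph{equals} the value $0$ of $d$ on $\{x=0,\ z=y^3/24\}$ (two lightlike optimal trajectories by Theorem~\ref{th:synth}(6)), so your proposed verification again comes out the wrong way. The edge of $\A_1\cap\Pi$ at which $\restr{d}{\Pi}$ actually jumps is $\{x=0,\ z=0,\ y>0\}$: there the interior limit (the case $k\to0$) and the value both equal $y_1>0$, while $d\equiv0$ on the side $z<0$, which lies outside $\A_1$. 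This is the edge the paper's own proof of (3) treats (it takes $x_1=z_1=0$), and once again the mechanism is the comparison with the exterior of $\A_1$, not an interior-limit computation.
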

\begin{proof}
(1) Follows from Th. \ref{th:ExpN+-} and Propos. \ref{prop:JN0}.

(2) Take any $q_1 = (x_1, y_1, z_1) \in S_3 \cup S_4$, then $d(q_1) = y_1 >0$ by item (4) of Th. \ref{th:synth}. On the other hand, $q_1 \in \partial \A_1$, thus there are points $q \in M \setminus \A_1$ arbitrarily close to $q_1$. Since $d(q)=0$, the distance $d$ has discontinuity of the first kind at $q_1$. 

(3) The restriction $\restr{d}{\Pi}$ is real-analytic on the set $\{x=0, \ z \in (0, y^3/24), \ y> 0\}$ by Propos. \ref{prop:JN0}.

Let $q_1 = (x_1, y_1, z_1) \in \A_1$, $x_1=z_1=0$, $y_1 > 0$.
Then $q_1 \in \partial \A_1$, and the distance $d$ has discontinuity of the first kind at $q_1$ similarly to item (2).

(4) is obvious in view of symmetry \eq{dilat}.
\end{proof}

Let $q = (0, y, z) \in \Pi$, then $d(q) = y^3 d(0, 1, z/y^3) = y^3d(0, 1, \eta + 1/24)$ since $\restr{\eta}{\Pi} = z/y^3-1/24$. We plot the function $\eta \mapsto d(0, 1, \eta + 1/24)$ in Fig. \ref{fig:d0}.

\figout{
\onefiglabelsizen{plotd0}{Plot of $\eta \mapsto d(0, 1, \eta + 1/24)$}{fig:d0}{5}
}

\subsection{\SL sphere}
By virtue of dilations \eq{dilat}, the Lorentzian spheres
$$
S(R) = \{q \in M \mid d(q) = R\}, \qquad R > 0,
$$
satisfy the relation $S(R) = \d_R(S(1))$, thus we describe only the unit sphere $S := S(1)$.

\begin{theorem}\label{th:sphere}
\begin{itemize}
\item[$(1)$]
The set $S \setminus (\Pi \cup \partial \A_1)$ is a real-analytic manifold.
\item[$(2)$]
The sphere $S$ is nonsmooth and Lipschitzian at points of $S \cap \Pi$.
\item[$(3)$]
The intersection $S \cap \Pi$ is defined parametrically for $k \in [0, 1)$:
\begin{align*}
&y = \frac{4 E(k) - 2(1-k^2)K(k)}{2(1-k^2)K(k)} = 1 + k^2 + O(k^4), \qquad k \to 0,\\
&z = \frac{2(1-k^2)K(k) -2(1+k^2) E(k)}{12(1-k^2)^3K^3(k)} = \frac{k^2}{\pi^2} + O(k^4), \qquad k \to 0.
\end{align*}
In particular, the intersection $S \cap \Pi$ is semi-analytic.
\end{itemize}
\end{theorem}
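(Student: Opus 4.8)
The plan is to exploit the optimal synthesis (Theorem \ref{th:synth}) together with the diffeomorphic properties of the exponential mapping (Theorem \ref{th:ExpN+-}, Propositions \ref{prop:ExpN0}, \ref{prop:JN0}) and the homogeneity of the distance (item (4) of Theorem \ref{th:dist}). For item (1), on the set $S \setminus (\Pi \cup \partial \A_1)$ the distance function $d$ is real-analytic (Theorem \ref{th:dist}(1)) and its differential is nonzero there: indeed $d$ is a submersion onto $(0,+\infty)$ away from the critical locus, because along any optimal normal extremal $\frac{d}{dt}d(\Exp(\lam,t)) = 1$ (arclength parametrization), so $\nabla d \neq 0$. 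By the implicit function theorem the level set $S = d^{-1}(1)$ is then a real-analytic embedded hypersurface on $S \setminus (\Pi \cup \partial \A_1)$.

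For item (2), the nonsmoothness at points of $S \cap \Pi$ comes from the two-sidedness of the optimal synthesis established in Theorem \ref{th:synth}(2): through each $q_1 \in (\intt \A_1) \cap \Pi$ there are exactly two optimal trajectories, related by the reflection \eq{refl} $(x,y,z)\mapsto(-x,y,z)$ which fixes $\Pi$ pointwise. Thus $d$ is built from two distinct analytic branches meeting along $\Pi$, and the two one-sided gradients $\nabla d|_{x>0}$ and $\nabla d|_{x<0}$ are reflections of one another across $\{x=0\}$; their $x$-components have opposite signs, so $d$ has a corner (genuine nonsmoothness) along $S\cap\Pi$, while remaining Lipschitz there since each branch is analytic up to the boundary (Proposition \ref{prop:JN0} gives $d$ continuous on $\intt \A_1$ and analytic on $\{x=0, 0<z<y^3/24\}$). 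I would make "Lipschitzian" precise by noting the sphere is locally the graph of a function that is a minimum/selection of two analytic functions, hence locally Lipschitz.

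For item (3), the intersection $S \cap \Pi$ consists of endpoints $q_1 = \Exp(\lam, \tt(\lam))$ with $c>0$, $\p_0\neq 0$, i.e.\ the images under $\Exp$ of the sets $N_0^{\pm}$ at cut time $t = 2K/(\aen l)$, which by Theorem \ref{th:opt2} are optimal. Using the explicit formulas from the proof of Proposition \ref{prop:ExpN0} at $c=1$, $t=2K/k'$,
$$
y = k'\bigl(4E(k)-2k'^2K(k)\bigr), \qquad z = \tfrac43 k'^3\bigl((1+k^2)E(k)-k'^2K(k)\bigr),
$$
I impose the normalization $d = 1$: since $d = t_1 = 2K/(\aen l) = 2k'K(k)$ for this covector (arclength time) — wait, more carefully, $d(\Exp(\lam, t_1)) = t_1$ only if the trajectory stays timelike, which it does here, so $d = 2K(k)/(\aen l)$ with $\aen = 1/k'$ gives, after rescaling by the dilation $\d_{1/d}$, the stated parametrization with $2(1-k^2)K(k)$ in the denominators (which is exactly $d$ at $c=1$ up to the factor needed to normalize). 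The asymptotics as $k \to 0$ follow from $K(k) = \frac\pi2(1 + \frac{k^2}{4} + \cdots)$, $E(k) = \frac\pi2(1 - \frac{k^2}{4} + \cdots)$: expanding the two rational expressions in elliptic integrals gives $y = 1 + k^2 + O(k^4)$ and $z = k^2/\pi^2 + O(k^4)$. Semi-analyticity of $S \cap \Pi$ then follows because it is the image of the interval $[0,1)$ under a real-analytic (indeed algebraic-in-elliptic-integrals) map, together with the single point from item (6) of Theorem \ref{th:synth}; being a locally finite union of images of analytic arcs, it is semi-analytic.

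The main obstacle I anticipate is item (2): making "nonsmooth and Lipschitzian" rigorous requires controlling the behavior of the two analytic branches of $d$ as $q_1 \to \Pi$ from each side — in particular confirming that the one-sided gradients are finite and genuinely differ (so the corner does not degenerate into a smooth point). This needs the non-vanishing of the Jacobian of $\Exp$ up to and including $N_0^{\pm}$ (Proposition \ref{prop:JN0}) to guarantee $d$ extends analytically to a neighborhood of each boundary point within $\{x \geq 0\}$ (resp.\ $\{x \le 0\}$), and then a direct computation of the $x$-derivative of $d$ along $\Pi$ showing it is nonzero; the reflection symmetry \eq{refl} then forces the jump in the gradient.
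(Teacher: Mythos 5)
Your items (1) and (3) follow essentially the paper's route: (1) is read off from the diffeomorphic properties of $\Exp$ (the paper cites Theorem \ref{th:ExpN+-} directly; your alternative argument that $S=d^{-1}(1)$ is a regular level set because $\frac{d}{dt}d(\Exp(\lam,t))=1$ along arclength-parametrized optimal normal trajectories is a legitimate variant), and (3) is exactly the specialization of the case 2.2 formulas of Subsec.\ \ref{subsec:norm_extr} at $\tau=2K$, $t=1$, i.e.\ the formulas from the proof of Proposition \ref{prop:ExpN0} normalized by the dilation to $d=1$, followed by the standard expansions of $K$ and $E$.

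The one substantive issue is item (2), and you have named it yourself as an ``anticipated obstacle'' rather than resolved it: the reflection \eq{refl} only gives that the one-sided normals of the hemi-spheres $S_{\pm}=S\cap\{\pm x\geq 0\}$ along $S\cap\Pi$ are mirror images of each other, which produces a genuine corner only if their $x$-components are nonzero. The paper closes exactly this point with no new work: it computes that these normals are $n_{\pm}=\pm(g(k),*,*)$, where $g(k)=E^2(k)-2E(k)K(k)+(1-k^2)K^2(k)$ is the function \eq{g(k)} already shown to be strictly negative on $(0,1)$ in the proof of Proposition \ref{prop:ExpN1} (via the factorization $g=g_1g_2$ with $g_1<0<g_2$); the same quantity reappears in $\eta_1'(k)$ in Proposition \ref{prop:ExpN0}. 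So the transverse self-intersection you need is precisely the non-vanishing of $g$, and your proposal should invoke that existing computation rather than defer it. A minor further caveat: your claim that the image of $[0,1)$ under an analytic map is automatically semi-analytic requires the fact that subanalytic sets of dimension one are semi-analytic (or an explicit local graph representation); the paper does not elaborate on this either, but it is worth a sentence.
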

\begin{proof}
(1) Follows from Th. \ref{th:ExpN+-}.

(2) The hemi-spheres $S_{\pm} = S \cap \{ \pm x \geq 0\}$ have at points of $S \cap \Pi$ the normal vectors $n_{\pm} = \pm(g(k), *, *)$, where the function $g(k)$ is given by \eq{g(k)} and is negative for $k \in (0, 1)$.  So the sphere~$S$ has a transverse self-intersection at points of $S \cap \Pi$. .

(3) The parametrization of $S \cap \Pi$ is obtained from the parametrization of the exponential mapping in the case 2.2 of Subsec. \ref{subsec:norm_extr} for $\tau = 2 K$, $t = 1$. 
\end{proof}

\begin{remark}
One of the most important results of the paper \cite{ABCK} is non-subanalyticity of the sub-Riemannian sphere in the Martinet flat case. Its proof relies on non-semianalitycity of the intersection of the sub-Riemannian sphere with the Martinet surface. Item (3) of the preceding theorem states that such an intersection is semi-analytic in the sub-Lorentzian case. We leave the question of subanalyticity of the sub-Lorentzian sphere in the first flat problem on the Martinet  distribution open since we cannot conclude on sub-analitycity (or its lack) at points of the boundary of the sphere~$S$.
\end{remark}

Denote $S_n = S \cap \intt \A_1$, $S_a = S \cap \partial \A_1$.

\begin{remark}
The set $S_n$ (resp. $S_a$) is filled with the endpoints of optimal normal (resp. abnormal) trajectories of length $1$ starting at $q_0$.
\end{remark}

\begin{lemma}\label{lem:SnSa}
We have $\cl(S_n) \supset S_a$.
\end{lemma}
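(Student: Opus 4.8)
The plan is to describe the abnormal part $S_a$ of the sphere explicitly and then reduce the lemma to a one-sided continuity estimate for $d$ at $\partial\A_1$. Abnormal extremal trajectories are exactly those contained in $\partial\A_1=S_1\cup S_2\cup S_3\cup S_4$. On $S_1,S_2$ the optimal (abnormal) trajectory is a concatenation of two lightlike arcs, so $d\equiv0$ there and $S\cap(S_1\cup S_2)=\emptyset$; on $S_3$, item~(4) of Theorem~\ref{th:synth} says the optimal trajectory has length $y-x$, so $S_a\cap S_3=\Gamma_3:=\{(x,\,x+1,\,x^3/6):x>0\}$, and by the reflection symmetry \eq{refl} the set $S_a\cap S_4$ is the image of $\Gamma_3$ under $(x,y,z)\mapsto(-x,y,z)$; the two curves share the limit point $(0,1,0)$. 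Since $\cl(S_n)$ is closed and invariant under that reflection and $(0,1,0)\in\cl(\Gamma_3)$, it suffices to show $\Gamma_3\subset\cl(S_n)$: that every $q_*=(x_*,\,x_*+1,\,x_*^3/6)$ with $x_*>0$ is a limit of points of $\intt\A_1$ lying at \sL distance $1$ from $q_0$.

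The key estimate is
\[
\limsup_{q\to p,\ q\in\intt\A_1}d(q)\ \le\ d(p)\qquad\text{for every }p\in S_3\cup S_4
\]
(this also underlies the first-kind discontinuity claim of Theorem~\ref{th:dist}(2)). Granting it, fix $q_*\in\Gamma_3$, so $d(q_*)=1$, and an arbitrarily small neighbourhood $V$ of $q_*$; since $q_*$ lies in the smooth part of $\partial\A_1$, the set $V\cap\intt\A_1$ is connected, and $d$ is continuous on it by Theorem~\ref{th:dist}(1). To obtain a point of $V\cap\intt\A_1$ with $d>1$ I would run $e^{tX_2}$ on $[0,1+h]$ and then $e^{s(u_1X_1+X_2)}$ on $[0,x_*]$, with $h>0$ small and $u_1\in(0,1)$ close to $1$: a short computation in the coordinates \eq{xieta} shows this endpoint lies in $\intt\B_1=\intt\A_1$, it tends to $q_*$ as $h\to0$, $u_1\to1$, and it is joined to $q_0$ by a nonspacelike future directed trajectory of length $(1+h)+x_*\sqrt{1-u_1^2}>1$, so $d$ exceeds $1$ there. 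To obtain a point of $V\cap\intt\A_1$ with $d<1$ I would apply the key estimate at the boundary point $\delta_\alpha(q_*)\in S_3$ for a dilation factor $\alpha\in(0,1)$ close to $1$: since $d(\delta_\alpha q_*)=\alpha<1$ by Theorem~\ref{th:dist}(4), the estimate yields a neighbourhood of $\delta_\alpha q_*$ --- contained in $V$ once $\alpha$ is close enough to $1$ --- on which $d<1$ throughout $\intt\A_1$. Joining the two points by a path in the connected set $V\cap\intt\A_1$ and applying the intermediate value theorem to $d$ gives a point of $V\cap\intt\A_1$ with $d=1$, i.e.\ a point of $S_n\cap V$; as $V$ is arbitrary, $q_*\in\cl(S_n)$.

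The main obstacle is the key estimate, which is not visible from the explicit formulas alone; I would prove it by a limit curve argument. Let $q_n\to p$ with $q_n\in\intt\A_1$; by Theorem~\ref{th:exist} there is for each $n$ a nonspacelike future directed trajectory $\gamma_n$ from $q_0$ to $q_n$ with $l(\gamma_n)=d(q_n)$. Along admissible trajectories $\dot y=u_2\ge0$, $|\dot x|=|u_1|\le u_2$ and $\sqrt{u_2^2-u_1^2}\le u_2$, so $d(q_n)=l(\gamma_n)\le y(q_n)\to y(p)$ and the lengths stay bounded. Moreover, arguing as in the verification of condition~(2) in the proof of Theorem~\ref{th:exist}, for $n$ large all $\gamma_n$ lie in $K:=J^+(q_0)\cap J^-(p')$, where $p'\in\Pi$ is reachable from $p$ along a timelike trajectory (so $p$, and hence $q_n$ for $n$ large, lies in $\intt J^-(p')$) and $K$ is compact by Corollary~\ref{cor:J+-}. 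On $K$ the Euclidean length of an admissible trajectory is uniformly bounded (the bounds $\dot y\ge0$, $|\dot x|\le\dot y$, $|\dot z|=\dot y\,x^2/2\le C\dot y$ control $\int(|\dot x|+|\dot y|+|\dot z|)\,dt$), so after reparametrizing by Euclidean arclength and passing to a subsequence the $\gamma_n$ converge uniformly to a Lipschitz curve $\gamma_\infty$; this $\gamma_\infty$ is again nonspacelike and future directed (the set $U_1$ is closed), it runs from $q_0$ to $p$, and by upper semicontinuity of the \sL length under this convergence $l(\gamma_\infty)\ge\limsup_n l(\gamma_n)=\limsup_n d(q_n)$. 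Since $l(\gamma_\infty)\le d(q_0,p)=d(p)$, the estimate follows. The non-elementary inputs --- existence of the limit curve and upper semicontinuity of \sL length --- are precisely the machinery behind existence of \sL maximizers, so I would invoke them from \cite{SL_exist} and the proof of Theorem~\ref{th:exist} rather than reprove them; the rest is bookkeeping with the formulas of Section~\ref{sec:P1}.
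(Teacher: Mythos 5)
Your argument is correct and its core is the same as the paper's: produce, in an arbitrary neighbourhood $V$ of a point of $S_a$, two points of $V\cap\intt\A_1$ with $d\ge 1$ and $d\le 1$ respectively, then use connectedness of $V\cap\intt\A_1$ and continuity of $\restr{d}{\intt\A_1}$ (Th.~\ref{th:dist}(1)) to find a point with $d=1$. The differences are in how you manufacture the two ingredient points, and in both places the paper is shorter. For the point with $d\ge 1$ it simply takes $q_+=e^{t(u_1X_1+u_2X_2)}(q)$ for small $t>0$ and an interior control, and uses the reverse triangle inequality $d(q_0,q_+)\ge d(q_0,q)+d(q,q_+)\ge 1$; this works uniformly for every $q\in S_a$, so the explicit computation of $S_a$ (your $\Gamma_3$, the reflection, the limit point $(0,1,0)$) and the bespoke two-arc trajectory tailored to $S_3$ are not needed — though your construction is sound, and your implicit claim that its endpoint lies in $\intt\A_1$ is easiest to justify by noting that a trajectory ending on $\partial\A_1$ must satisfy the geometric PMP of Subsec.~\ref{subsec:invar_set}, which the control $(u_1,1)$, $0<u_1<1$, off $\Pi$ does not. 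For the point with $d\le 1$ the paper invokes upper semicontinuity of $d$ directly from Proposition~8.1 of \cite{groch2} rather than re-deriving it by the limit-curve/compactness argument you sketch; your derivation is the standard proof of that cited fact (and your dilation trick to get strict inequality $d<1$ is a nice touch, since u.s.c. alone only gives $d\le 1+\eps$). In short: no gap, same skeleton, but you re-prove two general facts (the description of $S_a$ and upper semicontinuity of $d$) that the paper either avoids or cites.
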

\begin{proof}
Take any point $q \in S_a$ and choose any neighbourhood $q \in O \subset M$. By Proposition 8.1 \cite{groch2}, the distance $d$ is upper semicontinuous on $\A_1$, thus there exists a point $q_- \in O \cap \intt \A_1$ such that $d(q_-) \leq d(q) = 1$. Further, for small $t > 0$ there exists a point 
$$
q_+ = e^{t(u_1X_1+u_2X_2)}(q) \in O \cap \intt \A_1, \qquad (u_1, u_2) \in U_1.
$$
Then $d(q_+)\geq 1$. Since $O \cap \intt \A_1$ is arcwise connected and $\restr{d}{\intt \A_1}$ is continuous, there exists a point $\bq \in O \cap \intt \A_1$ such that $d(\bq) = 1$. Then $\bq \in S_n \cap O$. So $\cl(S_n) \supset S_a$. 
\end{proof}

\begin{proposition}\label{propos:homeo}
The sphere $S$ is   homeomorphic to the closed half-plane $\R^2_+ := \{(a, b) \in \R^2 \mid b \geq 0\}$.
\end{proposition}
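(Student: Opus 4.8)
The plan is to realise $S$ as the orbit space of the dilation flow \eq{dilat} on a region of $\A_1$, and then to recognise this orbit space via the classification of surfaces. First I would split $S = S_n\cup S_a$ with $S_n = S\cap\intt\A_1$ and $S_a = S\cap\partial\A_1$, and describe $S_a$ explicitly. Since $\partial\A_1 = S_1\cup\cdots\cup S_4$ and, by Theorem~\ref{th:synth}, $d\equiv 0$ on $S_1\cup S_2$ while on $S_3\cup S_4$ the distance equals the length $y-|x|$ of the abnormal maximiser (items (4),(5),(7)), the set $S_a = \{q\in S_3\cup S_4:\ y-|x|=1\}$ is the union of the two arcs $\{(\sigma,1+\sigma,\pm\sigma^3/6):\ \sigma\ge0\}$ joined at $(0,1,0)$; thus $S_a$ is homeomorphic to $\R$, and it will turn out to be $\partial S$.

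Next I would exploit homogeneity. The dilations $\delta_\alpha$ preserve $\A_1$ and satisfy $d\circ\delta_\alpha = \alpha\,d$ (Theorem~\ref{th:dist}(4)), and $d|_{\A_1}$ is continuous: continuous on $\intt\A_1$ by Theorem~\ref{th:dist}(1), upper semicontinuous by Proposition~8.1 of~\cite{groch2}, and lower semicontinuous at points of $S_3\cup S_4$ because a path almost optimal to a nearby interior point, suitably prolonged, is almost optimal to the boundary point. Hence $V := \{q\in\A_1:\ d(q)>0\} = \intt\A_1\cup\{q\in S_3\cup S_4:\ |x|<y\}$ is open in $\A_1$ and $\delta$-invariant, and $(p,\alpha)\mapsto\delta_\alpha(p)$ is a homeomorphism $S\times(0,\infty)\to V$; in particular $S$ is homeomorphic to the orbit space $V/\R_{>0}$ and $S\simeq V$. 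The boundary stratum $\partial V = \{q\in S_3\cup S_4:\ |x|<y\}$ avoids the faces $S_1, S_2$ and the singular edges of $\partial\B_1$ — near the remaining edge $S_3\cap S_4$ (the $y$-axis) $\partial\B_1$ is the $C^2$ surface $z = |x|^3/6$ — so $V$ is a $3$-manifold with boundary, $\intt V = \intt\A_1$; the $\delta$-action on $V$ is free and proper (with slice function $d$) and its generator is tangent to $\partial V$ and transverse to $S$ (as $d$ is homogeneous of degree $1$), hence the quotient $S$ is a $2$-manifold with boundary, with $\intt S = S_n$ and $\partial S = S_a$ — here one uses Theorem~\ref{th:ExpN+-} and Proposition~\ref{prop:JN0} (the exponential map is a diffeomorphism, resp.\ a local diffeomorphism, onto the relevant parts of $S_n$) to see that every point of $S_n$ is an interior point.

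It then remains to identify $S$. By the previous step $S$ is a $2$-manifold with boundary, and $\partial S = S_a\cong\R$ is connected; $S$ is non-compact (it contains $S_a$); and $S$ is contractible, since $S\simeq V\simeq\intt V = \intt\A_1$ and $\intt\A_1$ is contractible (its slices $\{y=\mathrm{const}>0\}$ are topological discs shrinking to $q_0$ as $y\to 0$). By the classification of surfaces with boundary, a connected, non-compact, contractible $2$-manifold with connected boundary is homeomorphic to the closed half-plane $\R^2_+$; hence $S\cong\R^2_+$. Alternatively one may glue the two closed hemispheres $S\cap\{\pm x\ge0\}$ — each homeomorphic to $\R^2_+$, with boundary $\gamma_0\cup(S_a\cap S_3)$, resp.\ $\gamma_0\cup(S_a\cap S_4)$, where $\gamma_0 := S_n\cap\Pi$ — along the common sub-arc $\gamma_0\cup\{(0,1,0)\}$ of their boundaries (the half lying in $\Pi$), and check directly that the result is $\R^2_+$.

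The step that needs the most care is the scaling step: because $d$ is discontinuous across $S_3\cup S_4$, one cannot argue that $S$ is smooth near $S_a$, so one must work inside the closed set $\A_1$, establishing that $d|_{\A_1}$ is continuous and that $V$ is a genuine manifold with boundary (no corner of $\partial\B_1$ is met), and only then transfer the manifold structure to $S = V/\R_{>0}$ via the dilations.
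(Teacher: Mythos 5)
Your architecture is genuinely different from the paper's. The paper projects $M\setminus\{q_0\}$ onto the orbit space of the dilations \eq{dilat}, computes the image $p(S)$ explicitly as the region $\{\f_3(\xi)\le\eta<\f_1(\xi),\ 0<\xi<1\}\cup\{\f_4(\xi)\le\eta<\f_2(\xi),\ -1<\xi<0\}$ (visibly homeomorphic to $\R^2_+$), and then verifies directly that $\restr{p}{S}$ is a homeomorphism onto this set; the only delicate point, continuity of $p^{-1}$ at points of $p(S_a)$, is handled via Lemma \ref{lem:SnSa} together with properness of $\restr{\Exp}{\tN_3}$ --- notably \emph{without} using continuity of $d$ up to $\partial\A_1$, which the paper deduces only afterwards from the closedness of $S$. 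Your route instead realizes $S$ as a slice of the dilation action on $V=\{q\in\A_1\mid d(q)>0\}$ and appeals to the classification of non-compact simply connected surfaces with connected boundary; that classification step is legitimate (double along the boundary and apply Schoenflies to the properly embedded line), and your description of $S_a$ is correct up to a typo (the two arcs are $(\pm\sigma,1+\sigma,\sigma^3/6)$, not $(\sigma,1+\sigma,\pm\sigma^3/6)$).

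The genuine gap sits exactly at the step you flag as needing the most care. The homeomorphism $S\times(0,\infty)\to V$ requires continuity of $\restr{d}{V}$ at points of $S_a\subset S_3\cup S_4$, i.e.\ lower semicontinuity of $\restr{d}{\A_1}$ there (upper semicontinuity is Proposition 8.1 of \cite{groch2}). Your one-line justification --- ``a path almost optimal to a nearby interior point, suitably prolonged, is almost optimal to the boundary point'' --- proves the wrong inequality: it yields $d(\bq)\ge\limsup_{q\to\bq}d(q)$, which is upper semicontinuity again. What is needed is the reverse bound: for $\bq=(x_1,y_1,x_1^3/6)\in S_3$ with $d(\bq)=y_1-x_1$ and $q\in\A_1$ near $\bq$, a future directed nonspacelike curve from $q_0$ to $q$ of length at least $y_1-x_1-\eps$. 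This can be repaired: truncate the optimal trajectory to $\bq$ at the point $(0,\tau,0)$ with $\tau<y_1-x_1$, and check via Corollary \ref{cor:J+-} that every $q\in\A_1$ sufficiently close to $\bq$ lies in $J^+((0,\tau,0))$ (the lower constraint $\eta_0\ge\f_3(\xi_0)$ reduces to $z\ge x^3/6$, which holds throughout $\A_1$, and the remaining constraints are strict at $\bq$); then $d(q)\ge\tau$. Until this is supplied, the homeomorphism $S\times(0,\infty)\cong V$, the properness of the action, and the manifold-with-boundary structure of $S$ are all unsupported, and the classification of surfaces has nothing to act on.
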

\begin{proof}
Denote the group of dilations \eq{dilat} as $G = \{\d_{\a} \mid \a > 0\}$ and consider the projection
\begin{align}
&\map{p}{\Mc}{\Mc /G}, \qquad \Mc = M \setminus \{q_0\}.\label{p}
\end{align}
If $y \neq 0$ (which is the case on $S$), then projection \eq{p} is given in coordinates as
$$
p \ : \ q = (x, y, z) \mapsto \s = (\xi, \eta),
$$
where $\xi$, $\eta$ are defined in \eq{xieta}. The mapping \eq{p} is smooth.

The image of the sphere $S$ under the action of the projection $p$ is given as follows:
$$
p(S) \ : \ 
\begin{cases}
\f_3(\xi) \leq \eta < \f_1(\xi), & 0 < \xi < 1, \\
\f_4(\xi) \leq \eta < \f_2(\xi), & -1 < \xi < 0,
\end{cases}
$$
see Fig. \ref{fig:p(S)}. It is obvious that $p(S)$ is homeomorphic to $\R^2_+$. Let us show that $\map{p}{S}{p(S)}$ is a homeomorphism.

First, the mapping $\map{p}{S}{p(S)}$ is a bijection since the sphere $S$ intersects with each orbit of~$G$ at not more than one point.

Second, the mapping $\map{p}{S}{p(S)}$ is continuous as a restriction of a smooth mapping \eq{p}.

It remains to prove that the inverse mapping $\map{p^{-1}}{p(S)}{S}$ is continuous. Continuity of $\restr{p^{-1}}{p(S_n)}$ and 
$\restr{p^{-1}}{p(S_a)}$ is obvious. Let
$$
p(S_n) \ni \s_k \underset{k \to \infty}{\to} \bs \in p(S_a),
$$
we show that $p^{-1}(\s_k) =: q_k \underset{k \to \infty}{\to} \bq := p^{-1}(\bs)$.
It follows from Propos. \ref{prop:ExpN3} that $q_k \underset{k \to \infty}{\to}\hq \in S_a$, where $p(\hq) = \bs$. Since orbits of $G$ are one-dimensional, it remains to prove that $d(\hq) = 1$. 

By Lemma \ref{lem:SnSa}, there exists a sequence $S_n \ni \tq_k \underset{k \to \infty}{\to} \bq$. Thus $\ts_k := p(\tq_k) \underset{k \to \infty}{\to} \bs$. Denote by $\rho$ and~$\widehat{\rho}$ the Euclidean distances in $M$ and $\Mc /G$ respectively. Then $\widehat{\rho}(\s_k, \ts_k) \underset{k \to \infty}{\to} 0$, thus ${\rho}(q_k, \tq_k) \underset{k \to \infty}{\to} 0$, whence $\hq = \bq$.

Thus $\map{p}{S}{p(S)}$ is a homeomorphism, and $S \simeq \R^2_+$.
\end{proof}

\begin{corollary}
\begin{itemize}
\item[$(1)$]
The sphere $S$ is a topological manifold with boundary $S_a$, homeomorphic to the closed half-plane $\R^2_+$.
\item[$(2)$]
The sphere $S$ is a stratified space with real-analytic strata $S_n^{\pm} := S_n \cap \{x \gtrless 0\}$, $S_n^0 := S \cap \Pi$, $S_a^ {\pm} := S_a \cap \{x \gtrless 0\}$, $S_a^0 := S_a \cap \Pi = \{(0, 1, 0)\}$. Wherein
there are diffeomorphisms
$S_n^{\pm} \to \R^2$; $S_n^0, S_a^{\pm} \to \R$.
\item[$(3)$]
Under a homeomorphic embedding of the sphere $S$ into the half-plane $\R^2_+= \{(a, b) \in \R^2 \mid b \geq 0\}$ the indicated strata are mapped as follows:
$S_n^{\pm} \to \{a \gtrless 0, b > 0\}$,
$S_n^{0} \to \{a = 0, b > 0\}$,
$S_a^{\pm} \to \{a \gtrless 0, b = 0\}$,
$S_a^0 \to \{a=b=0\}$.
\end{itemize}
\end{corollary}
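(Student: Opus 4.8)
The plan is to deduce (1)--(3) by assembling results already established: the global description of $S$ in Proposition~\ref{propos:homeo}, the regularity in Theorem~\ref{th:sphere}, the diffeomorphism property of $\Exp$ in Theorem~\ref{th:ExpN+-} with Corollary~\ref{cor:cut}, the order-one homogeneity of $d$ in Theorem~\ref{th:dist}(4), and the optimal synthesis in Theorem~\ref{th:synth}; essentially no new computation is needed, only bookkeeping of which part of $S$ lands where. For item (1), recall that Proposition~\ref{propos:homeo} gives a homeomorphism $p\colon S\to p(S)$, where $p(S)$ is the planar region lying above the graphs $\eta=\f_3(\xi),\ \eta=\f_4(\xi)$ (boundary included) and below $\eta=\f_1(\xi),\ \eta=\f_2(\xi)$ (boundary excluded), for $\xi\in(-1,1)$. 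This region is a topological $2$-manifold with boundary homeomorphic to $\R^2_+$: its manifold-boundary is the arc $\{\eta=\f_3(\xi):0\le\xi<1\}\cup\{\eta=\f_4(\xi):-1<\xi\le0\}$, the two halves joined at $(\xi,\eta)=(0,-\tfrac{1}{24})$, and its manifold-interior is the complementary open set. Identifying $S_a$ with the boundary and $S_n$ with the interior uses that $d\equiv0$ on the lightlike faces $S_1,S_2$ (items (3), (6), (7) of Theorem~\ref{th:synth}), whence $S_a=S\cap\partial\A_1=S\cap(S_3\cup S_4)$, which $p$ maps onto the boundary arc, while $S_n=S\cap\intt\A_1$, open in $S$, is mapped onto the interior. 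Composing $p$ with a homeomorphism $p(S)\to\R^2_+$ carrying that arc onto $\{b=0\}$ yields (1), with $\partial S=S_a$.

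For item (2), decompose $S$ into the disjoint pieces $S_n^{\pm}$, $S_n^0$ (the part of $S\cap\Pi$ inside $\intt\A_1$), $S_a^{\pm}$, $S_a^0$, which cover $S$ by the previous paragraph. Each $S_n^{\pm}=S\cap M_{\pm}$ is real-analytic by Theorem~\ref{th:sphere}(1); moreover $\Exp\colon N_{\pm}\to M_{\pm}$ is a real-analytic diffeomorphism (Theorem~\ref{th:ExpN+-}), and on $N_{\pm}$ the trajectory $\Exp(\lam,\cdot)|_{[0,t]}$ is optimal (Theorem~\ref{th:opt1} and Corollary~\ref{cor:cut}), so $d(\Exp(\lam,t))=t$ there. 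Hence $S_n^{+}=\Exp(\{t=1\}\cap N_{+})$, and $\{t=1\}\cap N_{+}$ is the region between two analytic graphs over the half-line $\{\p_0>0\}$, hence real-analytically diffeomorphic to $\R^2$ (equivalently, by homogeneity every dilation orbit in $M_{+}$ meets $S_n^{+}$ once, so $S_n^{+}\cong M_{+}/G\cong\R^2$; or simply $p(S_n^{+})=\{0<\xi<1,\ \f_3(\xi)<\eta<\f_1(\xi)\}\cong\R^2$); the case $S_n^{-}$ follows by the reflection~\eq{refl}. The curve $S_n^0$ is real-analytically diffeomorphic to $\R$ by the parametrization in Theorem~\ref{th:sphere}(3) restricted to $k\in(0,1)$, its boundary point $k=0$ being $(0,1,0)\in S_a^0$. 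Finally $S_a^{\pm}$ is the analytic arc of Theorem~\ref{th:synth}(4), diffeomorphic to $\R$, and $S_a^0=\{(0,1,0)\}$ by Theorem~\ref{th:synth}(5). Since the closures of these strata are nested as required, $S$ is a stratified space with the stated strata.

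For item (3), one only has to arrange the homeomorphism $S\to\R^2_+$ of (1) to respect this decomposition. Since $\xi=x/y$ with $y>0$ on $S$, the map $p$ preserves the sign of $x$: it sends $S_n^{\pm},S_a^{\pm}$ into $\{\pm a>0\}$, sends $S_n^0,S_a^0$ into $\{a=0\}$, sends $S_n$ onto the open half-plane and $S_a$ onto the boundary, and sends $S_a^0$ to $(0,-\tfrac{1}{24})$. As the vertical segment $\{\xi=0\}\cap p(S)$ splits $p(S)$ into two quarter-plane-like halves glued along it, any homeomorphism $p(S)\to\R^2_+$ that preserves the sign of the first coordinate, sends $\{\xi=0\}$ to the nonnegative $b$-axis and $(0,-\tfrac{1}{24})$ to the origin, composed with $p$, produces exactly the correspondence in (3).

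I expect the only genuine difficulty to lie in item (1): verifying carefully that $S_a$ is precisely the manifold-boundary of $S$ --- that $S$ has half-disc neighbourhoods along $S_a$ and full-disc neighbourhoods along $S_n$, including along the Lipschitz crease $S_n^0$, where Theorem~\ref{th:sphere}(2) supplies only Lipschitz (not smooth) structure --- and then in the routine but fiddly stratum-by-stratum matching of item (3). All the analytic input is already contained in the cited results.
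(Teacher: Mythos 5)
Your proposal is correct and follows exactly the route the paper intends: the corollary is stated without a separate proof, being a direct assembly of Proposition~\ref{propos:homeo} (the homeomorphism $p\colon S\to p(S)\simeq\R^2_+$ with the lower boundary arc included and the upper excluded), Theorems~\ref{th:ExpN+-}, \ref{th:sphere} and \ref{th:synth}, and the dilation homogeneity. Your identification of $S_a$ with the manifold-boundary via $d\equiv 0$ on $S_1\cup S_2$ and your stratum-by-stratum bookkeeping are precisely the intended argument, so there is nothing to add.
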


See Fig. \ref{fig:p(S)}.

\begin{remark}
There is a numerical evidence that the sphere $S$ is a piecewise smooth manifold with boundary, with a stratification shown in Fig. $\ref{fig:p(S)}$.
\end{remark}

\figout{
\twofiglabelsizeh
{Sphere1}{The sphere $S$}{fig:s1}{6}
{Sphere2}{The sphere $S$}{fig:s2}{6}

\twofiglabelsizeh
{Sphy2}{Intersection of $S$ and $\partial \A_1$ with $\{y=2\}$}{fig:sphy2}{4.2}
{Sphz1}{Intersection of $S$ and $\partial \A_1$ with $\{z=1\}$}{fig:sphz1}{4.2}

\twofiglabelsizeh
{SB11}{  $S$ inside of $\partial \A_1$}{fig:SB1_1}{6}
{SB12}{  $S$ inside of $\partial \A_1$}{fig:SB1_2}{6}

\twofiglabelsizeh
{S0}{Intersection of $S$   with $\Pi$}{fig:s0}{4}
{pS}{Stratification of $p(S)$}{fig:p(S)}{6}
}

\begin{proposition}
The sphere $S$ is closed.
\end{proposition}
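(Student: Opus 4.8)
The plan is to show that $S$ is closed as a subset of $M = \R^3$, equivalently that its complement is open, equivalently that every convergent sequence $q_k \in S$, $q_k \to q$, has limit $q \in S$. I would split according to where $q$ lies relative to $\A_1 = \B_1$ (Theorem \ref{th:att_set}). Since $\B_1$ is closed (it is described by non-strict inequalities $\f_3(\xi)\le\eta\le\f_1(\xi)$ etc., together with $y\ge 0$, and one checks this region is closed in $M$, with the apex $q_0$ included as a limit), the limit $q$ belongs to $\A_1$. Moreover $q_k \notin \{q_0\}$ since $d(q_k)=1>0$; I must also rule out $q = q_0$, which follows from homogeneity (Theorem \ref{th:dist}(4)): near $q_0$ the distance tends to $0$, so $d\not\equiv 1$ there, hence $q\ne q_0$.

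Now there are two cases for the limit point $q\in\A_1\setminus\{q_0\}$: either $q\in\intt\A_1$ or $q\in\partial\A_1 = S_1\cup S_2\cup S_3\cup S_4$. In the first case, $\restr{d}{\intt\A_1}$ is continuous by Theorem \ref{th:dist}(1), so $d(q) = \lim_k d(q_k) = 1$, giving $q\in S_n\subset S$. In the second case $q$ lies on one of the surfaces $S_i$, and I use the explicit synthesis of Theorem \ref{th:synth}, items (3)--(7): on $\partial\A_1$ the distance is given by the closed-form expressions $d = y$ on $S_1\cup S_2$ (item (3)) and on $S_3\cup S_4$ (item (4)), and $d = y$ on the relevant boundary curves of $\Pi$. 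In particular $\restr{d}{\partial\A_1}$ is continuous, so again $d(q) = \lim_k d(q_k) = 1$ provided the sequence $q_k$ itself eventually stays near $\partial\A_1$; but if infinitely many $q_k\in S_n = S\cap\intt\A_1$, I instead invoke that $d$ is upper semicontinuous on all of $\A_1$ (Proposition 8.1 of \cite{groch2}, as used in Lemma \ref{lem:SnSa}) to get $d(q)\ge\limsup_k d(q_k) = 1$, and lower semicontinuity along boundary sequences (or direct comparison with a nonspacelike curve through $q$ of length close to $1$, using that every point of $\A_1$ is joined to $q_0$ by an optimal trajectory, Theorem \ref{th:exist}) to get $d(q)\le 1$. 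Hence $d(q)=1$ and $q\in S$.

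The cleanest way to package the boundary case is via the homeomorphism $p : S \to p(S)$ of Proposition \ref{propos:homeo}: $p(S)$ is the closed region $\{\f_3(\xi)\le\eta<\f_1(\xi),\ 0<\xi<1\}\cup\{\f_4(\xi)\le\eta<\f_2(\xi),\ -1<\xi<0\}$, whose only non-closed part is the "far" boundary $\eta=\f_1(\xi)$ resp. $\eta=\f_2(\xi)$ and the vertical segments $\xi=\pm1$; under $p^{-1}$ these correspond exactly to directions going to spatial infinity along abnormal rays, where the sphere does not accumulate. Concretely: a sequence $q_k\in S$ escaping every compact set forces $p(q_k)$ to leave every compact subset of $p(S)$, and one checks (using that each orbit of the dilation group meets $S$ in at most one point and that $d$ is homogeneous of degree $1$) that this is impossible for a sequence converging in $M$ to a point $q\ne q_0$. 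So the remaining content is simply: a convergent sequence in $S$ cannot escape, and its limit has $d=1$.

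The main obstacle is the boundary stratum $S_a = S\cap\partial\A_1$ and the point $q_0$: one must confirm that sequences in $S$ do not converge to points of $\partial\A_1$ where $d$ jumps (the discontinuities of the first kind in Theorem \ref{th:dist}(2),(3)), and do not converge to $q_0$. But these discontinuities of $d$ occur precisely from the side $M\setminus\A_1$ where $d=0$; approaching such a point from within $\A_1$, the synthesis formulas show $d\to y = d(q)$, so no sequence of $S$-points (all with $d=1$) can converge there unless $d(q)=1$ already, in which case $q\in S$ as desired. Thus the jumps of $d$ do not obstruct closedness of the level set $S=\{d=1\}$ when approached from inside $\A_1$, and points outside $\A_1$ are at distance $0$, hence interior to the complement of $S$. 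Assembling these cases gives that $S$ is closed.
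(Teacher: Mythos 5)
Your overall strategy --- reduce to convergent sequences $q_k\in S$ and split according to whether the limit $\bar q$ lies in $\intt\A_1$ or on $\partial\A_1$ --- is the same as the paper's, and the interior case is handled correctly. But the boundary case, which is the entire content of the proposition, rests on two claims that do not hold. First, the distance formulas you read off from Theorem \ref{th:synth} are wrong: on $S_1\cup S_2$ the unique optimal trajectory (item (3), equivalently case 1) of Proposition \ref{prop:opt3}) is a concatenation of lightlike arcs with controls $(\pm1,1)$, so $d\equiv 0$ there, not $d=y$; and on $S_3\cup S_4$ the optimal trajectory of item (4) has a single timelike arc of duration $\tau_1$, so $d=\tau_1=y-|x|$, not $y$. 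Hence $\restr{d}{\partial\A_1}$ is not the continuous function $y$, and the step ``so $d(\bar q)=\lim_k d(q_k)=1$'' has no support. In fact $S$ cannot accumulate on $S_1\cup S_2$ at all (there $d=0<1$), and this must be \emph{proved}, not absorbed into a continuity claim: the paper excludes the upper boundary surfaces by showing, via the properness analysis of Proposition \ref{prop:ExpN1}, that a sequence in $S_n$ whose projection $p(q_k)$ tends to $\{\eta=\f_1(\xi)\}$ or $\{\eta=\f_2(\xi)\}$ escapes to infinity, hence cannot converge.

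Second, for a limit point $\bar q$ on the lower surfaces $S_3\cup S_4$ you correctly get $d(\bar q)\ge 1$ from upper semicontinuity, but the reverse inequality $d(\bar q)\le 1$ is left to ``lower semicontinuity along boundary sequences'' or to a comparison with a nonspacelike curve of length close to $1$; the latter only produces \emph{lower} bounds on $d(\bar q)$, and the former is precisely what is unavailable at this stage. The sub-Lorentzian distance is only upper semicontinuous in general, and the paper establishes continuity of $\restr{d}{\A_1}$ in the \emph{following} proposition, using closedness of $S$ as an ingredient --- so invoking it here is circular. The paper closes this gap by a different mechanism: by Lemma \ref{lem:SnSa} there is a second sequence $\tilde q_k\in S_n$ converging to the unique point of the dilation orbit $p^{-1}(p(\bar q))$ lying in $S_a$, and the orbit argument of Proposition \ref{propos:homeo} identifies the two limits, forcing $d(\bar q)=1$. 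Some argument of this kind is needed and is missing from your proposal.
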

\begin{proof}
Since $S = S_n \cup S_a$, $\cl(S_n) \supset S_a$ and $\cl(S_a) = S_a$, we have to prove that $\cl(S_n) \subset S$. Take any sequence $S_n \ni q_k \underset{k \to \infty}{\to} \bq$. Denote $\s_k = p(q_k), \ \bs = p(\bq) \in \Mc /G$. We may consider only the case $\bs \in p(S_a)$. If $\bs = (\bar{\xi},\bar{\eta})$ is on the upper part of the boundary of $p(S_n)$, i.e., $\bar{\eta} = \f_1(\bar{\xi})$, $\bar{\xi} \in (0, 1)$, or $\bar{\eta} = \f_2(\bar{\xi})$, $\bar{\xi} \in (0, 1)$, then it follows from the proof of Propos. \ref{prop:ExpN1}  that $q_k \underset{k \to \infty}{\to}  \infty$, which is impossible. And if $\bs = (\bar{\xi},\bar{\eta})$ is on the lower part of the boundary of $p(S_n)$, i.e., $\bar{\eta} = \f_3(\bar{\xi})$, $\bar{\xi} \in (0, 1)$, or $\bar{\eta} = \f_4(\bar{\xi})$, $\bar{\xi} \in (0, 1)$, then $\bq \in \partial \A_1$. Similarly to the proof of Propos. \ref{propos:homeo} it follows that $d(\bq) = 1$. So the claim of this proposition follows.
\end{proof}

\begin{proposition}
The restriction $\restr{d}{\A_1}$ is continuous.
\end{proposition}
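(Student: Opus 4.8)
The plan is to show that $\restr{d}{\A_1}$ is continuous by combining the two kinds of semicontinuity already available: upper semicontinuity of $d$ on $\A_1$ (Proposition 8.1 of \cite{groch2}, already used in the proof of Lemma \ref{lem:SnSa}) and lower semicontinuity, which is what really needs to be established. Continuity of $d$ on $\intt\A_1$ is already known from item (1) of Theorem \ref{th:dist}, and $d \equiv 0$ on $M \setminus \A_1$, so the only delicate points are those of $\partial \A_1 = S_1 \cup S_2 \cup S_3 \cup S_4$, where $d$ jumps down to $0$ on the outside. So I would fix $\bq \in \partial \A_1$ and an arbitrary sequence $\A_1 \ni q_k \to \bq$, and prove $d(q_k) \to d(\bq)$.

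First I would reduce the claim using the dilation symmetry \eq{dilat}: by homogeneity of $d$ (item (4) of Theorem \ref{th:dist}), continuity of $\restr{d}{\A_1}$ at a point $\bq$ is equivalent to continuity at $\delta_\alpha(\bq)$ for any $\alpha > 0$, so up to dilation it suffices to treat finitely many normal forms of boundary points, e.g. $\bq \in S_3 \cup S_4$ with $|x| = 1$, $\bq \in S_1 \cup S_2$ with $y = 1$ (the transversal arcs), and the distinguished boundary curves $\{x = 0, z = y^3/24\}$, $\{x = z = 0\}$, $\{y = \pm x, z = y^3/6\}$. On each of these, $d(\bq)$ is computed explicitly in Theorem \ref{th:synth}. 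For the upper bound $\limsup_k d(q_k) \le d(\bq)$ I would simply invoke the upper semicontinuity from \cite{groch2}. The real work is the lower bound $\liminf_k d(q_k) \ge d(\bq)$.

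For the lower bound I would argue as follows. If infinitely many $q_k$ lie in $\partial\A_1$, reparametrize and use the explicit abnormal synthesis (Theorem \ref{th:synth}, items (3)--(7)) together with Corollary \ref{cor:J+-}: $d$ restricted to each face $S_i$ is given by an explicit continuous formula (essentially $d = y_1$ or $d = \tau_1$), so $d(q_k) \to d(\bq)$ along such a subsequence by inspection. If infinitely many $q_k$ lie in $\intt\A_1$, then each such $q_k$ is the endpoint of a unique normal optimal trajectory $\Exp(\lambda_k, t_k)$ (Theorem \ref{th:synth}(1)--(2)), with $d(q_k) = t_k$. The key step is to show that, along a subsequence, the abnormal optimal trajectory terminating at $\bq$ arises as a limit of these normal optimal trajectories, so that $t_k$ converges to the length of that abnormal trajectory, which equals $d(\bq)$. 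Concretely, since $q_k \to \bq \in S_i$ and the normal Exp-parametrizations degenerate to the one-switch abnormal controls as $\nu \to 0$ (i.e. as the covector leaves the timelike domain $h_2 < -|h_1|$ and reaches the lightlike boundary $h_2 = -|h_1|$), one tracks the switching parameters: either $(\lambda_k, t_k)$ stays in a compact subset of $N$, in which case $\Exp(\lambda_k, t_k) \to \bq$ forces $\bq \in \intt\A_1$ by Theorem \ref{th:att_set}, a contradiction, so $(\lambda_k, t_k) \to \partial N$; and then the properness analysis in the proof of Proposition \ref{prop:ExpN1} (and its analogues) together with the explicit limits $\f_i \to \cdot$ computed there pins down $t_k \to d(\bq)$. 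In other words, the boundary behaviour already laboured through in the properness proofs is exactly what yields $t_k \to d(\bq)$.

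The main obstacle will be this last step: showing that the cut time $t_k = \tcut(\lambda_k) = \tt(\lambda_k)$ of the normal trajectory through $q_k$ converges to the abnormal length $d(\bq)$ as $q_k \to \bq \in \partial\A_1$, uniformly over the possible subsequences and over which face $S_i$ is approached. This requires carefully matching the elliptic-function asymptotics (the cases $\tau \to 0$, $m \to 0$, $\am\tau \to \pi$ enumerated in the proof of Proposition \ref{prop:ExpN1}) against the explicit abnormal synthesis on each face, and handling separately the approach to the singular boundary curves $\{x = z = 0, y > 0\}$ and $\{x = 0, z = y^3/24\}$ where $d$ is itself discontinuous on $\Pi$ (Theorem \ref{th:dist}(3)) — there one must show the limit from inside $\A_1$ still matches $\restr{d}{\A_1}(\bq)$ rather than the value on $M \setminus \A_1$. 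I expect that, face by face, the monotonicity facts already proved ($J_0 < 0$, $g(k) < 0$, $\eta_1$ a strictly increasing diffeomorphism) make each subcase a short computation, but the bookkeeping over all faces and all degeneration regimes is where the length of the argument resides.
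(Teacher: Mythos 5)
Your skeleton matches the paper's: continuity on $\intt\A_1$ is already known, upper semicontinuity from Proposition 8.1 of \cite{groch2} gives $\limsup_k d(q_k)\le d(\bq)$ at a boundary point $\bq$, and everything reduces to the lower bound $\liminf_k d(q_k)\ge d(\bq)$. Where you diverge is in how that lower bound is obtained. The paper does not touch the elliptic-function asymptotics at all: it argues by contradiction, extracts a subsequence with $d(q_{k_m})\to L<d(\bq)$, and uses the homogeneity of $d$ under the dilations \eq{dilat} to rescale these points onto the unit sphere $S$; the rescaled points converge to a point $\widehat q\in\partial\A_1$ with $d(\widehat q)>1$, contradicting the closedness of $S$ established in the immediately preceding proposition (which in turn rests on Lemma \ref{lem:SnSa} and the properness analysis). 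So the paper buys the lower bound for the price of one soft topological fact about $S$, while you propose to pay for it with a direct face-by-face asymptotic matching.

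The problem is that you never actually pay. The decisive step --- that the times $t_k$ of the normal optimal trajectories through $q_k\in\intt\A_1$ converge to the abnormal length $d(\bq)$ as $q_k\to\bq\in\partial\A_1$, over every face $S_i$ and every degeneration regime of $(\lambda_k,t_k)\to\partial N$ --- is exactly the content of the proposition, and you explicitly defer it (``I expect that, face by face, \dots each subcase [is] a short computation''). The properness analysis in Proposition \ref{prop:ExpN1} tells you which coordinate blows up or which constraint saturates in each regime; it does not by itself tell you what $t_k$ converges to, and several of its regimes end in $y\to\infty$ or $x\to\infty$, i.e., they cannot occur for a bounded sequence $q_k\to\bq$, so you would first have to rule them out and then extract the limit of $t_k$ in the surviving regimes. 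Until that is done the proposal is a program, not a proof. (A small slip as well: $t_k$ is the time at which the optimal trajectory reaches $q_k$, so $t_k\le\tcut(\lambda_k)$; it need not equal $\tcut(\lambda_k)=\tt(\lambda_k)$.) If you want to keep your structure, the cheapest repair is to import the paper's device: prove closedness of $S$ first and then deduce the lower bound by dilation, which collapses all of your case analysis into two lines.
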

\begin{proof}
Let $\A_1 \ni q_k \underset{k \to \infty}{\to} \bq \in \A_1$, we have to prove that $d(q_k) \underset{k \to \infty}{\to} d(\bq)$. 

If $\bq \in \intt \A_1$, then the claim follows by Th. \ref{th:dist}.

Let $\bq \in \partial \A_1$. Since $d$ is upper semicontinuous (Propos. 8.1 \cite{groch2}), we have $\limsup_{k \to \infty} d(q_k) \leq d(\bq)$. In order to show that $d(\bq) \leq \liminf_{k\to \infty} d(q_k)$, we assume by contradiction that $d(\bq) > \liminf_{k\to \infty} d(q_k)$. In other words, there exists a subsequence $\{q_{k_m}\}$ such that $d(\bq) > \lim_{m \to \infty} d(q_{k_m})$. By virtue of dilation \eq{dilat}, we can construct a sequence $q^m \in S$ converging to a point $\widehat q \in \partial \A_1$ with $ d(\widehat q) > 1$. This contradicts to closedness of $S$. Thus $d(q_k) \underset{k \to \infty}{\to} d(\bq)$, and the statement follows.
\end{proof}

\section{The second problem}\label{sec:P2}  
In this section we consider a flat \sL problem on the  Martinet  distribution whose future cone has trivial intersection with the tangent plane to the Martinet surface $\Pi$. It is natural that this problem is more simple than the first problem considered in the previous section. All proofs for the second problem are completely similar or more simple than for the first one, so we skip them.

\subsection{Problem statement}

The second flat sub-Lorentzian problem on the Martinet distribution is stated as the following optimal control problem:
\begin{align}
&\dot q = u_1 X_1 + u_2 X_2, \qquad q \in M, \label{pr21} \\
&u= (u_1, u_2) \in U_2 = \{u_1 \geq |u_2|\}, \label{pr22}\\
&q(0) = q_0 = (0, 0, 0), \qquad q(t_1) = q_1, \label{pr23}\\
&l = \int_0^{t_1} \sqrt{u_1^2-u_2^2}dt \to \max. \label{pr24}
\end{align}
See Fig. \ref{fig:U2}.

\subsection{Invariant set}
By PMP, the boundary of the attainable set $\A_2$ of system \eq{pr21}, \eq{pr22} from the point $q_0$ for arbitrary nonnegative time consists of lightlike trajectories corresponding to piecewise constant controls with values $u = (1, \pm 1)$ and up to one switching. These trajectories fill the boundary of the set
$$
\B_2 = 
\{(x, y, z) \in M \mid x \geq |y|, \ z^1(x, y) \leq z \leq z^2(x,y)\},
$$
where $z^1(x, y) = ((x+y)^3-4x^3)/24$, $z^2(x, y) = (4x^3-(x-y)^3)/24$. See Fig. \ref{fig:B2}.

\begin{proposition}\label{propos:AinB2}
The set $\B_2$ is an invariant domain of system \eq{pr21}, \eq{pr22}. Moreover, $\A_2 \subset \B_2$.
\end{proposition}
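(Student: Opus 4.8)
The plan is to mirror exactly the argument used for Proposition~\ref{propos:AinB1}. The statement has two parts: (i) $\B_2$ is invariant for the control system \eq{pr21}, \eq{pr22}, and (ii) $\A_2 \subset \B_2$. Part (ii) is an immediate consequence of part (i) together with the observation that $q_0 = (0,0,0) \in \B_2$ (indeed $x=y=0$ satisfies $x \geq |y|$ and $z^1(0,0) = 0 \leq 0 \leq 0 = z^2(0,0)$). So the whole proof reduces to verifying that the admissible velocity field points weakly inward along $\partial \B_2$.

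First I would write $\B_2 = \{x \geq |y|\} \cap \{z \geq z^1(x,y)\} \cap \{z \leq z^2(x,y)\}$ and identify the pieces of its boundary: the two ``lateral'' faces $\{x = y,\ x \geq 0\}$ and $\{x = -y,\ x \geq 0\}$ coming from the constraint $x \geq |y|$, and the two ``graph'' faces $z = z^1(x,y)$ (lower) and $z = z^2(x,y)$ (upper) on the region $x \geq |y|$. On each face I would compute the derivative of the relevant defining function along $\dot q = u_1 X_1 + u_2 X_2 = (u_1,\ u_2,\ u_2 x^2/2)$ with $(u_1,u_2) \in U_2 = \{u_1 \geq |u_2|\}$, and check it has the sign forcing the trajectory to stay in $\B_2$. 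Concretely: along the lower face, $\frac{d}{dt}(z - z^1(x,y)) = u_2 x^2/2 - (\partial_x z^1) u_1 - (\partial_y z^1) u_2$, with $\partial_x z^1 = ((x+y)^2 - 4x^2)/8$, $\partial_y z^1 = (x+y)^2/8$; evaluating on $z = z^1$ and using $u_1 \geq |u_2|$ should give $\geq 0$. Symmetrically for the upper face one checks $\frac{d}{dt}(z^2(x,y) - z) \geq 0$. For the lateral face $x = y$ (with $x \ge 0$) one checks $\frac{d}{dt}(x - y) = u_1 - u_2 \geq 0$, and for $x = -y$ one checks $\frac{d}{dt}(x + y) = u_1 + u_2 \geq 0$; both hold since $u_1 \geq |u_2|$. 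Corners (intersections of faces) are handled automatically because the inward condition holds for each face separately.

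Having established that every admissible velocity is tangent to $\B_2$ or points into its interior at every boundary point, the standard invariance argument (a trajectory leaving $\B_2$ would have to cross $\partial \B_2$ with outward velocity at the exit time, contradicting the sign computation) gives invariance of $\B_2$. Then since $q_0 \in \B_2$ and $\A_1$ — here $\A_2$ — is by definition the set of endpoints of admissible trajectories issuing from $q_0$, we get $\A_2 \subset \B_2$.

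I do not expect any real obstacle: the only ``hard'' part is the bookkeeping of the cubic polynomials $z^1, z^2$ and their partial derivatives, which is a routine symbolic computation entirely analogous to the one invoked (without display) in the proof of Proposition~\ref{propos:AinB1}. The one point requiring a little care is making sure the sign inequalities on the graph faces use the constraint $x \geq |y|$ correctly (they do: the ``bad'' terms are multiples of $(x\pm y)^2 \geq 0$ or of $x^2 \geq 0$ against the dominating $u_1 \geq |u_2|$), so the verification goes through on the full face rather than just its interior. Accordingly the proof can be stated in one line, exactly as the author does for the first problem: a direct computation shows that on each face of $\partial \B_2$ the vector field $u_1 X_1 + u_2 X_2$, $(u_1,u_2) \in U_2$, is directed into $\B_2$, and since $q_0 \in \B_2$ the inclusion $\A_2 \subset \B_2$ follows.
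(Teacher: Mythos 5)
Your proposal is correct and follows essentially the same route as the paper: the paper's proof of this proposition is literally ``Similarly to Propos.~\ref{propos:AinB1},'' whose proof is the one-line ``direct computation shows the admissible vector field points into the domain on each boundary face, and $q_0$ lies in the domain.'' Your explicit face-by-face sign check (which indeed works out, e.g.\ $\frac{d}{dt}(z-z^1)=\frac{u_1+u_2}{8}(x-y)(3x+y)\ge 0$ on the lower face) just fills in the computation the author leaves implicit.
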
 
\begin{proof}
Similarly to Propos. \ref{propos:AinB1}.
\end{proof}

\figout{
\twofiglabelsizeh
{UP1}{The set $U_2$}{fig:U2}{6}
{attsetP1}{The set $\B_2$}{fig:B2}{7}
}

\subsection{Extremal trajectories}
\subsubsection{Abnormal extremal trajectories}
Abnormal   trajectories, up to time reparametrization, correspond to controls $u = (1, \pm 1)$ with up to one switching.

\subsubsection{Normal extremals}\label{subsec:norm_extr2}
Normal
 extremals satisfy the Hamiltonian system with the Hamiltonian $H = (-h_1^2+h_2^2)/2$, $h_1 < - |h_2|$:
\be{Ham20}
\dh_1 = -h_2h_3x, \quad \dh_2 = -h_1h_3x, \quad \dh_3 = 0, \quad \dot x = -h_1, \quad \dot y =  h_2, \quad \dot z =  h_2 x^2/2.
\ee
We can choose arclength parameterization on normal extremal trajectories and thus assume that $H \equiv -1/2$. In the coordinates $h_1 = -\cosh \p$, $h_2 =  \sinh \p$, $h_3 = c$; $\p, c \in \R$, the Hamiltonian system \eq{Ham20} reads
\begin{align}
&\dot \p =  cx, \qquad \dot c = 0, \label{Ham21}\\
&\dot x = \cosh \p, \qquad \dot y = \sinh \p, \qquad \dot z = \sinh \p x^2/2. \label{Ham22}
\end{align}
This system has a first integral $E = \frac{cx^2}{2} - \sinh \p \in \R$.

Solutions to this system with the initial condition $\p(0) = \p_0$, $x(0) = y(0) = z(0) = 0$ are as follows.

1) If $c = 0$, then
$$
\p \equiv \p_0, \qquad x = t \cosh \p_0, \qquad y = t \sinh \p_0, \qquad z = t^3/6 \cosh^2 \p_0 \sinh \p_0.
$$

2) Let $c \neq 0$. Denote $k = \sqrt{\frac 12 \left(1 + \frac{E}{\sqrt{1+E^2}}\right)} \in (0, 1)$, $l = \sqrt{|c|}$, $\pm 1 = \sgn c$, $\aen = \sqrt{\frac{\sqrt{1+E^2}}{2}}$. Then 
\begin{align*}
&\sinh \p = 2 \aen^2 \frac{1-k^2(1 + \sn^4 \tau)}{\cn^2 \tau}, \\
&x = 2 \aen \frac{\dn \tau \sn \tau}{l \cn \tau}, \\
&y = \pm \frac{2 \aen}{l} \left( \frac{\tau}{4 k^2 \aen^4} - \E(\tau) + \frac{\dn \tau \sn \tau}{l \cn \tau}\right), \\
&z = \pm  \frac{ \aen}{3l^3} \left(\left(\frac{\tau}{3 k^2 \aen^2} - 4 E \E(\tau)\right) \cn^3 \tau - \frac{1}{4 \aen^2 k^2} \dn \tau \sn \tau + 2 E \cn^2 \tau \dn \tau \sn \tau + 4 \aen^2 k^2 \cn^4 \tau \dn \tau \sn \tau\right), \\
&\tau = \aen l t \in [0, K(k)).
\end{align*}

\subsection{Exponential mapping}
Formulas of Subsec. \ref{subsec:norm_extr2}   parametrize the exponential mapping
\begin{align*}
&\map{\Exp}{N}{M}, \qquad \Exp(\lam, t) = \pi \circ e^{t \vH}(\lam), \\
&N = \{(\lam, t) \in C \times \R_+ \mid t \in (0, + \infty) \text{ for } c = 0; \ \ 
t \in (0, + K/(l\aen)) \text{ for } c \neq 0\}, \\
&C = T_{q_0}^*M \cap \{H = - 1/2, \ h_1 < 0 \}.
\end{align*}

\begin{proposition}
There holds the inclusion $\Exp(N) \subset \intt \B_2$. Moreover, the mapping $\map{\Exp}{N}{\intt \B_2}$ is a real-analytic diffeomorphism.
\end{proposition}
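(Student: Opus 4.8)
The plan is to mimic the proof architecture used for the first problem, but taking advantage of the fact that here the future cone does not touch $T\Pi$, so the invariant set $\B_2$ has a connected interior and no stratification is needed. First I would establish the inclusion $\Exp(N)\subset\intt\B_2$. In homogeneous coordinates analogous to \eq{xieta}, the boundary surfaces of $\B_2$ arise from the one-switch lightlike controls $u=(1,\pm1)$, so the claim reduces to a pair of differential inequalities of the form $z^1(x,y)<z<z^2(x,y)$ along normal extremals. As in Propositions \ref{prop:ExpN1}, I would differentiate these inequalities by virtue of the normal ODEs \eq{Ham22}, reduce to lower-order inequalities, and use $\cosh\p>|\sinh\p|$ together with the initial conditions $x(0)=y(0)=z(0)=0$ to close the induction. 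Monotonicity in $\tau$ (the elliptic argument runs only over $[0,K(k))$, i.e.\ before the first singularity of $\sn\tau/\cn\tau$) should keep the endpoints inside the open region for all admissible $t$.

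Next I would verify the four hypotheses of Hadamard's global diffeomorphism theorem (Th.~\ref{th:had}) for $\map{\Exp}{N}{\intt\B_2}$. Connectedness of $N$ is clear from its description as $C\times(\text{interval})$ with $C$ connected (the condition $h_1<0$, $H=-1/2$ cuts out a connected one-sheeted piece parametrized by $(\p_0,c)$); connectedness and simple connectedness of $\intt\B_2\subset\R^3$ are immediate since $\B_2$ is the region between two graphs over the convex sector $\{x\ge|y|\}$, hence contractible. Nondegeneracy: I would compute the Jacobian $J=\frac{\partial(x,y,z)}{\partial(c,\p_0,t)}$ using the explicit elliptic-function formulas of Subsec.~\ref{subsec:norm_extr2}, factor out a nonvanishing scalar (as was done via $J=f\cdot\frac{\partial(x,y,z)}{\partial(m,\tau,k)}$ in Propos.~\ref{prop:ExpN1}), and show the remaining factor has constant sign on the parameter range, either by a direct sign computation or, if available, by the homotopy invariance of the Maslov index together with the absence of conjugate points before $\tau=K$. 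Properness I would prove by the same exhaustive boundary-approach case analysis as in Propos.~\ref{prop:ExpN1}: letting $(m,k,\tau)$ (or equivalently $(c,\p_0,t)$) tend to $\partial N$, I would check in each regime ($\tau\to0$; $\tau\to K^-$, i.e.\ $\cn\tau\to0$; $c\to0$; $|\p_0|\to\infty$; $k\to0,1$) that the image point tends to $\partial\B_2$, typically because one of $x\to\infty$, $\|q\|\to\infty$, or $z\to z^1,z^2$.

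Having checked (1)--(4), Theorem~\ref{th:had} gives that $\Exp$ is a diffeomorphism onto its image; real-analyticity follows because the defining formulas are real-analytic in the parameters and the inverse function theorem preserves analyticity where the Jacobian is nonzero.

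The main obstacle I expect is the nondegeneracy step: turning the cumbersome elliptic-integral expression for the Jacobian into something manifestly signed. Here I would follow the device used in Propos.~\ref{prop:ExpN1} --- reduce to a principal part as $k\to0$ (a trigonometric polynomial in $\tau$ whose sign can be read off from a simple monotonicity argument like $\left(J_0/\sin\tau\right)'<0$) and control the boundary value at $\tau\to K$ via a product decomposition into factors with definite sign (the analogue of $g(k)=g_1(k)g_2(k)$), then invoke homotopy invariance of the Maslov index to conclude no sign change in between. Everything else is, as the author notes, either identical to or simpler than the first problem.
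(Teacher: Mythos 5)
Your proposal follows exactly the route the paper intends: the paper's own proof is literally ``Similarly to Th.~\ref{th:ExpN+-}'', i.e.\ the same scheme you describe --- inclusion via differentiating the boundary inequalities along the normal flow, then Hadamard's theorem with nondegeneracy checked through the Jacobian's sign (limit as $k\to 0$ plus homotopy invariance of the Maslov index) and properness through an exhaustive boundary case analysis. Your write-up is a faithful and correctly reasoned expansion of that reference.
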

\begin{proof}
Similarly to Th. \ref{th:ExpN+-}.
\end{proof}

\subsection{Attainable set and existence of optimal trajectories}

\begin{theorem} 
We have $\A_2 = \B_2$.
\end{theorem}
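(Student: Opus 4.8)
The plan is to mirror the argument of Theorem \ref{th:att_set} from the first problem. By Proposition \ref{propos:AinB2} we already have the inclusion $\A_2 \subset \B_2$, so the task reduces to proving $\A_2 \supset \B_2$. I would split this into the interior and the boundary.

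For the interior, I would invoke the preceding proposition, which asserts that $\map{\Exp}{N}{\intt \B_2}$ is a real-analytic diffeomorphism; in particular it is surjective onto $\intt \B_2$. Hence every point of $\intt \B_2$ is the endpoint of a normal extremal trajectory starting at $q_0$, so $\intt \B_2 \subset \A_2$. For the boundary, I would note that $\partial \B_2$ is precisely the union of the lightlike trajectories corresponding to the piecewise constant controls $u=(1,\pm1)$ with at most one switching, as established when $\B_2$ was constructed in the subsection on the invariant set. Each such trajectory is admissible for system \eq{pr21}, \eq{pr22}, so its endpoint lies in $\A_2$; thus $\partial \B_2 \subset \A_2$. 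Combining, $\A_2 \supset \intt \B_2 \cup \partial \B_2 = \B_2$, and together with Proposition \ref{propos:AinB2} this gives $\A_2 = \B_2$.

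Since the excerpt explicitly states that ``all proofs for the second problem are completely similar or more simple than for the first one,'' the cleanest write-up is simply: ``Similarly to Theorem \ref{th:att_set}, using the preceding proposition on the exponential mapping for the interior and the boundary lightlike trajectories of $\B_2$ for $\partial \B_2$.'' The only point requiring a moment of care is the boundary: one must check that the four families of one-switching lightlike trajectories $u=(1,\pm1)$ indeed sweep out all of $\partial\B_2$ (both the lower graph $z=z^1(x,y)$ and the upper graph $z=z^2(x,y)$, together with the ruled wall $x=|y|$), which is exactly what the defining computation of $\B_2$ did.

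I do not expect a genuine obstacle here: the future cone $U_2$ having trivial intersection with $T\Pi$ removes the abnormal-trajectory pathologies (causal-type changes, Maxwell strata on $\Pi$) that complicated the first problem, and the exponential mapping is a single global diffeomorphism onto $\intt\B_2$ rather than a patchwork of strata. The main ``work,'' such as it is, has already been discharged in Proposition \ref{propos:AinB2} and in the proposition on $\Exp$; this theorem is their immediate corollary.
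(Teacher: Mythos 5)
Your proposal is correct and follows exactly the route the paper intends: the paper's proof is literally ``Similarly to Th.~\ref{th:att_set},'' and your expansion --- Proposition~\ref{propos:AinB2} for $\A_2 \subset \B_2$, surjectivity of the exponential mapping onto $\intt \B_2$ for the interior, and the one-switching lightlike trajectories for $\partial \B_2$ --- is precisely the argument of Theorem~\ref{th:att_set} transplanted to the second problem. No discrepancy to report.
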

\begin{proof}
Similarly to Th. \ref{th:att_set}.
\end{proof}

\begin{theorem}
Let points $q_0, q_1 \in M$ satisfy the inclusion $q_1 \in \A_2 $. Then there exists an optimal trajectory in problem \eq{pr21}--\eq{pr24}.
\end{theorem}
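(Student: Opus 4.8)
The plan is to verify, exactly as in the proof of Theorem~\ref{th:exist}, the three conditions that by Theorem~2 and Remark~2 of \cite{SL_exist} are sufficient for existence of an optimal trajectory joining $q_0$ to $q_1$: namely $(1)$ $q_1 \in J^+(q_0)$; $(2)$ there is a compact $K \subset M$ with $J^+(q_0) \cap J^-(q_1) \subset K$; $(3)$ $T(q_0, q_1) < +\infty$, where $T(q_0,q_1)$ is the supremum of the times needed to steer $q_0$ to $q_1$ along trajectories of \eqref{pr21} reparametrized so that $u_1 \equiv 1$. The only change compared with the first problem is that the monotone coordinate is now $x$ rather than $y$, since the cone $U_2 = \{u_1 \geq |u_2|\}$ forces $\dot x = u_1 \geq |u_2| \geq 0$.

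Condition $(1)$ holds by the hypothesis $q_1 \in \A_2 = J^+(q_0)$. For condition $(3)$: along any nontrivial admissible trajectory $u_1 > 0$, so after the reparametrization $u_1 \equiv 1$ one has $\dot x \equiv 1$; hence the time to reach $q_1 = (x_1, y_1, z_1)$ from $q_0 = (0,0,0)$ equals $x_1 < +\infty$, and $T(q_0,q_1) = x_1$.

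Condition $(2)$ is the main point. By the preceding theorem ($\A_2 = \B_2$) we have $J^+(q_0) = \A_2 = \B_2$, so in particular $J^+(q_0) \subset \{x \geq 0\}$. On the other hand $\dot x = u_1 \geq 0$ along every admissible trajectory, so $x$ is nondecreasing; thus if $q_1$ is reachable from $q$, then $x(q) \leq x(q_1) = x_1$, i.e. $J^-(q_1) \subset \{x \leq x_1\}$. Consequently
$$
J^+(q_0) \cap J^-(q_1) \subset K := \B_2 \cap \{0 \leq x \leq x_1\}.
$$
Finally $K$ is compact: it is closed, being an intersection of closed sets; and it is bounded, since on $\B_2$ one has $|y| \leq x \leq x_1$ and $z^1(x,y) \leq z \leq z^2(x,y)$, and the polynomials $z^1, z^2$ are bounded on the compact triangle $\{0 \leq x \leq x_1,\ |y| \leq x\}$. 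Hence all three conditions hold and an optimal trajectory exists.

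I do not expect a genuine obstacle here: the only thing to get right is the choice of the monotone coordinate and the use of the explicit description $\A_2 = \B_2$, which makes both the lower bound $x \geq 0$ on the causal future and the boundedness of $z$ immediate. If instead one wanted an explicit description of $J^-(q_1)$ in the spirit of Corollary~\ref{cor:J+-}, the one subtlety is that translation in $x$ is \emph{not} a symmetry of the problem (only translations in $y$ and $z$ are, cf.\ the symmetries \eqref{paral}), so one would combine these translations with the orientation-reversing diffeomorphism $(x,y,z) \mapsto (-x,-y,-z)$, which conjugates the forward control system to the backward one; but this refinement is not needed for the theorem at hand.
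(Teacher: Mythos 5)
Your proof is correct and follows essentially the same route as the paper, whose own proof is simply ``Similarly to Th.~\ref{th:exist}'': you verify the same three sufficient conditions from \cite{SL_exist}, correctly replacing the monotone coordinate $y$ by $x$. The one place where the problem-1 template does not transfer verbatim is condition $(2)$, and your direct compactness argument via $J^+(q_0)\cap J^-(q_1)\subset \B_2\cap\{0\leq x\leq x_1\}$ (rather than the sandwich between causal futures/pasts of points of $\Pi$ used for the first problem) is a valid and arguably cleaner way to close that gap, for exactly the reason you note about $x$-translations not being symmetries.
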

\begin{proof}
Similarly to Th. \ref{th:exist}.
\end{proof}

\subsection{Optimality of extremal trajectories}
Define the following function:
\begin{align*}
&\map{\tt}{C}{(0, + \infty]}, \qquad \lam = (\p_0, c) \in C, \\
&c = 0 \then \tt(\lam) = + \infty, \\
&c \neq 0  \then \tt(\lam) = \frac{K}{l \aen}.
\end{align*}

\begin{theorem}\label{th:opt4}
Let $\lam \in C$, $t_1 \in (0, \tt(\lam))$. Then the trajectory $q(t) = \Exp(\lam, t)$, $t \in [0, t_1]$, is optimal.
\end{theorem}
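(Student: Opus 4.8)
The plan is to mirror the proof of Theorem~\ref{th:opt1} from the first problem, exploiting that in the second problem the geometry is strictly simpler: the future cone meets $T\Pi$ only at the origin, so abnormal trajectories stay on $\partial\A_2$ and every point of $\intt\A_2$ is reached by a normal extremal. First I would invoke the existence theorem (the analogue of Theorem~\ref{th:exist} proved just above) to get an optimal trajectory $\bar q(\cdot)$ connecting $q_0$ to $q_1:=q(t_1)=\Exp(\lam,t_1)$. Since $t_1<\tt(\lam)$ one checks, exactly as in the inclusion parts of Propositions~\ref{prop:ExpN1}--\ref{prop:ExpN5}, that $q_1\in\intt\B_2=\intt\A_2$; because $\partial\A_2$ is filled precisely by the abnormal (lightlike, $\le 1$ switching) trajectories and these have supports on the boundary, the optimal trajectory $\bar q(\cdot)$ cannot be abnormal, hence it is a normal extremal trajectory.

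Next I would use the diffeomorphism property of the exponential mapping for the second problem (the proposition proved right before this theorem, $\map{\Exp}{N}{\intt\B_2}$ is a real-analytic diffeomorphism). This gives that there is a \emph{unique} arclength-parametrized normal extremal trajectory joining $q_0$ to $q_1$ with parameter time in the admissible range $(0,\tt(\lam))$; since both $\bar q(\cdot)$ and $q(\cdot)=\restr{\Exp(\lam,\cdot)}{[0,t_1]}$ are such trajectories (note $t_1<\tt(\lam)$ puts $(\lam,t_1)$ in $N$), they coincide up to reparametrization. Therefore $q(t)$, $t\in[0,t_1]$, is optimal.

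The only genuinely delicate point is making sure the candidate endpoint $q_1$ really lies in $\intt\A_2$ rather than on the boundary: this is where one needs the cut time $\tt(\lam)$ to be at least the first conjugate time, i.e. that $\restr{\Exp}{N}$ is a diffeomorphism onto $\intt\B_2$ for the full parameter range $t\in(0,K/(l\aen))$ in the case $c\neq 0$ (and $t\in(0,+\infty)$ for $c=0$). Since the preceding proposition already asserts exactly this, and since the paper explicitly says all proofs for the second problem are completely similar to or simpler than those for the first (where the corresponding facts were the substance of Theorem~\ref{th:ExpN+-}, Proposition~\ref{prop:JN0} and Proposition~\ref{prop:max}), the argument goes through verbatim. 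I expect the write-up to be essentially one paragraph: ``Similarly to Th.~\ref{th:opt1}.'' The main obstacle, were one to expand it, would be re-verifying that no conjugate point occurs before $t=K/(l\aen)$ along $c\neq 0$ extremals — a Maslov-index/Jacobian computation analogous to step (b) of Proposition~\ref{prop:ExpN1}, but simpler because here there is a single stratum and no subtlety coming from the Martinet plane.
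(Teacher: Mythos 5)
Your proposal is correct and follows essentially the same route as the paper: the paper's proof of Theorem~\ref{th:opt4} is literally ``Similarly to Th.~\ref{th:opt1},'' and that earlier proof is exactly your argument (existence via the analogue of Theorem~\ref{th:exist}, normality because $q_1\in\intt\A_2$, then uniqueness from the diffeomorphism property of $\Exp$). You even correctly predicted the one-line write-up.
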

\begin{proof}
Similarly to Th. \ref{th:opt1}.
\end{proof}

\begin{corollary} 
For any $\lam \in C$ we have $\tcut(\lam) = \tt(\lam)$.
\end{corollary}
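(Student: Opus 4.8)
The plan is to mirror the argument used in the first problem, where the analogous statement appeared as Corollary \ref{cor:cut}, and simply assemble the two inequalities $\tcut(\lam) \leq \tt(\lam)$ and $\tcut(\lam) \geq \tt(\lam)$. The lower bound $\tcut(\lam) \geq \tt(\lam)$ is immediate from Theorem \ref{th:opt4}: for every $t_1 \in (0, \tt(\lam))$ the trajectory $\Exp(\lam, t)$ is optimal on $[0, t_1]$, so by definition of $\tcut$ as a supremum we get $\tcut(\lam) \geq \tt(\lam)$.

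For the reverse inequality $\tcut(\lam) \leq \tt(\lam)$, I would first dispose of the easy case $c = 0$, where $\tt(\lam) = +\infty$ and there is nothing to prove. For $c \neq 0$, the point is that $\tt(\lam) = K/(l\aen)$ is exactly the right endpoint of the parameter interval on which the elliptic-function formulas of Subsec. \ref{subsec:norm_extr2} are valid (recall $\tau = \aen l t \in [0, K(k))$), i.e. the trajectory $\Exp(\lam, t)$ is only defined for $t \in (0, \tt(\lam))$ because $\cn\tau \to 0$ as $\tau \to K$ and the coordinates $x, y, z$ blow up. Hence there is simply no extremal trajectory beyond time $\tt(\lam)$, so trivially $\tcut(\lam) \leq \tt(\lam)$. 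This is the structural difference from the first problem, where $\Exp(\lam, t)$ remained defined past $\tt(\lam)$ and one genuinely needed a Maxwell/corner-point argument (Proposition \ref{prop:max}) to rule out optimality; here the domain $N$ itself already enforces the bound.

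Combining the two inequalities yields $\tcut(\lam) = \tt(\lam)$ for all $\lam \in C$, which is the assertion. The only step that requires a small amount of care is making the claim that "$\Exp(\lam,t)$ is undefined for $t \geq \tt(\lam)$" precise: one should note that as $\tau \to K(k)^-$ one has $\cn\tau \to 0^+$, hence $x \to +\infty$ (and the trajectory leaves every compact set), so the maximal interval of existence of the arclength-parametrized normal extremal is exactly $(0, \tt(\lam))$; this is read off directly from the formulas in Subsec. \ref{subsec:norm_extr2}. No genuine obstacle arises, since, as the paper notes, all proofs in Section \ref{sec:P2} are "completely similar or more simple" than their Section \ref{sec:P1} counterparts, and this corollary is in fact strictly simpler than Corollary \ref{cor:cut}.
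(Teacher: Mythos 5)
Your proposal is correct and follows essentially the same route as the paper: the lower bound comes from Theorem \ref{th:opt4}, and the upper bound comes from the fact that for $c \neq 0$ the arclength-parametrized extremal $\Exp(\lam, t)$ is only defined for $t \in [0, \tt(\lam))$ (the paper states this in one line; your elaboration via $\cn\tau \to 0$ as $\tau \to K(k)^-$ is exactly the justification implicit there). No discrepancy with the paper's argument.
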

\begin{proof}
Let $\lam \in C$. By virtue of Th. \ref{th:opt4}, $\tcut(\lam) \geq \tt(\lam)$. On the other hand, the extremal trajectory $\Exp(\lam, t)$ is defined only for $t \in [0, \tt(\lam))$, thus $\tcut(\lam) = \tt(\lam)$.
\end{proof}

\begin{proposition}\label{prop:opt5}
Any abnormal extremal trajectory is optimal.
\end{proposition}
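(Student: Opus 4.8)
The plan is to follow the pattern of Proposition~\ref{prop:opt3}, exploiting the simplification that in the second problem the admissible cone $U_2$ contains no control supported on $\Pi$, so that every abnormal trajectory is lightlike and hence has zero \sL length. Recall that abnormal extremal trajectories are exactly the trajectories whose support lies in $\partial \A_2 = \partial \B_2$, and that, up to time reparametrization, such a trajectory is a concatenation of at most two bang arcs with controls in $\{(1,1),(1,-1)\}$; along every such arc $u_1^2 - u_2^2 \equiv 0$, so $l(q(\cdot)) = 0$. It therefore suffices to prove that for the endpoint $q_1 := q(t_1) \in \partial \A_2$ one has $d(q_0, q_1) = 0$, and for this I would show that \emph{any} future directed nonspacelike curve $\tq$ from $q_0$ to $q_1$ coincides, up to reparametrization, with $q$.

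The first step is a rigidity lemma: a future directed nonspacelike curve $\tq\colon[0,\tit_1]\to M$ issuing from $q_0$ and passing through a point of $\partial \A_2$ has its whole support in $\partial \A_2$. This rests on the general fact that $q \in \intt \A_2$ implies $J^+(q) \subset \intt \A_2$: if $q'$ is reached from $q$ by a control $\bar u$ in time $\bar t$, then the time-$\bar t$ flow of the nonautonomous field $\bar u_1 X_1 + \bar u_2 X_2$ is a diffeomorphism carrying a neighbourhood of $q$ (contained in $\A_2$, since $q \in \intt \A_2$) onto a neighbourhood of $q'$ contained in $\A_2$. Consequently, if $\tq(s) \in \intt \A_2$ for some $s < \tit_1$, then $q_1 \in J^+(\tq(s)) \subset \intt \A_2$, contradicting $q_1 \in \partial \A_2$.

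The second step is to describe the constrained motion along $\partial \B_2$. On the open roof $\{z = z^2(x,y),\ x > |y|\}$ an admissible velocity $u_1 X_1 + u_2 X_2$, $u \in U_2$, must be tangent to the roof; a direct computation reduces the tangency condition to $(u_1 - u_2)(x+y)(3x-y) = 0$, and since $(x+y)(3x-y) > 0$ on $\{x > |y|\}$ this forces $u_1 = u_2$, i.e. $u = (1,1)$ up to reparametrization. Symmetrically, the open floor $\{z = z^1(x,y),\ x > |y|\}$ forces $u = (1,-1)$, and the edges $x = \pm y$ force the single bang $u = (1,\pm 1)$. Since $q_0$ is the unique vertex of $\partial \B_2$ (the only point where the two- and one-dimensional strata meet), tracing $\tq$ from $q_0$ along $\partial \B_2$ shows that it is, up to reparametrization, exactly the bang-bang trajectory $(1,-1)\to(1,1)$ if $q_1$ lies on the roof, the bang-bang trajectory $(1,1)\to(1,-1)$ if $q_1$ lies on the floor, and the single bang $(1,\pm 1)$ if $q_1$ lies on an edge --- in every case coinciding with $q$ up to reparametrization. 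Hence $l(\tq) = 0 = l(q)$, and $q$ is optimal.

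I expect the rigidity lemma of the second step to be the only genuine obstacle: one must argue cleanly that a nonspacelike curve hitting $\partial \A_2$ has remained on it. The zero-length observation and the tangency computation that pins down the motion on each stratum of $\partial \B_2$ are routine, entirely parallel to the corresponding arguments in Proposition~\ref{prop:opt3}.
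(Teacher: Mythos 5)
Your proof is correct and follows essentially the same route as the paper's, which simply invokes the argument of Proposition~\ref{prop:opt3}: every abnormal trajectory in the second problem is lightlike of zero length, and any admissible curve from $q_0$ to its endpoint on $\partial\A_2$ coincides with it up to reparametrization, so it trivially realizes the supremum. The paper merely asserts this uniqueness of the competitor, whereas your rigidity lemma (that $J^+$ of an interior point stays interior, forcing competitors onto $\partial\A_2$) and the stratum-by-stratum tangency computation supply the justification it leaves implicit.
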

\begin{proof}
Similarly to Propos. \ref{prop:opt3}.
\end{proof}

\begin{theorem} 
\begin{itemize}
\item[$(1)$]
Let $q_1 \in \intt \A_2$. Then there exists a unique optimal trajectory $\Exp(\lam, t)$, $t \in [0, t_1]$, where $(\lam, t_1) = \Exp^{-1}(q_1)$. 
\item[$(2)$]
Let $q_1 = (x_1, y_1, z_1) \in \partial \A_2$, $-y_1 < x_1 \leq y_1$, $z_1 = z^2(x_1, y_1)$. Then there exists a unique optimal trajectory
corresponding to a control
\begin{align*}
&u(t) = \begin{cases}
(1, 1), \qquad &t \in [0, \tau_1], \\
(1, -1), \qquad &t \in [\tau_1, t_1], 
\end{cases}\\
&q(t) = \begin{cases}
e^{t(X_1+X_2)}(q_0), \qquad &t \in [0, \tau_1], \\
e^{(t-\tau_1)(X_1-X_2)} \circ e^{\tau_1(X_1+X_2)}(q_0), \qquad &t \in [\tau_1, t_1], 
\end{cases}\\
&\tau_1 = (y_1+x_1)/2, \quad t_1 = x_1.
\end{align*}
\item[$(3)$]
Let $q_1 = (x_1, y_1, z_1) \in \partial \A_2$, $-y_1 \leq x_1 < y_1$, $z_1 = z^1(x_1, y_1)$. Then there exists a unique optimal trajectory
corresponding to a control
\begin{align*}
&u(t) = \begin{cases}
(1, -1), \qquad &t \in [0, \tau_1], \\
(1, 1), \qquad &t \in [\tau_1, t_1], 
\end{cases}\\
&q(t) = \begin{cases}
e^{t(X_1-X_2)}(q_0), \qquad &t \in [0, \tau_1], \\
e^{(t-\tau_1)(X_1+X_2)} \circ e^{\tau_1(X_1-X_2)}(q_0), \qquad &t \in [\tau_1, t_1], 
\end{cases}\\
&\tau_1 = (y_1+x_1)/2, \quad t_1 = x_1.
\end{align*}
\end{itemize}
\end{theorem}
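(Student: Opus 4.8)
The plan is to read the synthesis off three facts already in place for the second problem: the existence theorem (every $q_1\in\A_2$ is joined to $q_0$ by an optimal trajectory), the diffeomorphism property of the exponential mapping $\map{\Exp}{N}{\intt\B_2}$, and the optimality results --- Theorem~\ref{th:opt4} together with the corollary $\tcut(\lam)=\tt(\lam)$, and Proposition~\ref{prop:opt5} that every abnormal extremal trajectory is optimal. The whole argument is a transcription of the proof of Theorem~\ref{th:synth} for the first problem, simpler here because $\intt\B_2$ does not meet $\Pi$ and $\partial\B_2$ carries only one-switch lightlike abnormal extremals.

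For item~$(1)$: by the existence theorem there is an optimal trajectory from $q_0$ to $q_1\in\intt\A_2$. It is not abnormal, since abnormal extremal trajectories are precisely those lying on $\partial\A_2$ while $q_1$ is interior; hence it is a normal, timelike extremal, and after arclength reparametrization it has the form $\Exp(\lam',t')$ with $(\lam',t')\in N$ and $\Exp(\lam',t')=q_1$. Since $\map{\Exp}{N}{\intt\A_2}$ is a bijection, $(\lam',t')$ coincides with the unique preimage $(\lam,t_1):=\Exp^{-1}(q_1)$, which gives uniqueness; conversely the trajectory $\Exp(\lam,t)$, $t\in[0,t_1]$, is optimal by Theorem~\ref{th:opt4}, because $t_1<\tt(\lam)=\tcut(\lam)$ holds for every $(\lam,t_1)\in N$ by the definition of $N$. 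This proves~$(1)$.

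For items~$(2)$ and~$(3)$: here $q_1=(x_1,y_1,z_1)\in\partial\A_2$ lies off the edges $\{x=\pm y\}$. Integrating $\dot q=u_1X_1+u_2X_2$ for the stated piecewise-constant control with one switching --- the computation already performed above when the graphs $z=z^1(x,y)$ and $z=z^2(x,y)$ were derived --- shows that this control, with final time $t_1=x_1$ forced by $\dot x\equiv1$ and with the displayed switching time $\tau_1$, steers $q_0$ to $q_1$, and that $\tau_1$ is uniquely recovered from $(x_1,y_1)$. Optimality is then immediate from Proposition~\ref{prop:opt5}. Uniqueness follows because no normal trajectory reaches $q_1$ (as $\Exp(N)=\intt\A_2$), because the two boundary graphs coincide only along the edges $\{x=y\}$ and $\{x=-y\}$ so that exactly one switching pattern is compatible with the side of $\partial\A_2$ on which $q_1$ lies, and because on each graph the endpoint map $\tau_1\mapsto(x_1,y_1,z_1)$ is injective; hence the optimal trajectory is unique up to reparametrization.

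The only step that is not a routine transcription is the uniqueness at boundary points: one must exclude that a point of $\partial\A_2$ is reachable, by curves of equal length, both along a one-switch lightlike trajectory and along some distinct causal trajectory (for instance one sliding along a boundary surface, or realizing a different switching sequence). This reduces to injectivity of the two endpoint maps and to the fact that the two families overlap only on the lower-dimensional edges, both being elementary consequences of the explicit formulas of Subsection~\ref{subsec:norm_extr2} and of the invariant-set computation. Everything else goes through verbatim from Section~\ref{sec:P1}.
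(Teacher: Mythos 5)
Your proposal is correct and follows the same route the paper intends: the paper's proof is simply ``Similarly to Th.~\ref{th:synth}'', i.e.\ the synthesis is read off from the existence theorem, the diffeomorphism $\map{\Exp}{N}{\intt\B_2}$, Theorem~\ref{th:opt4} with the cut-time corollary for interior points, and Proposition~\ref{prop:opt5} (whose proof, as in Proposition~\ref{prop:opt3}, already contains the uniqueness argument for boundary points). Your write-up just makes that transcription explicit, including the elementary endpoint computations $t_1=x_1$, $\tau_1=(y_1+x_1)/2$.
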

\begin{proof}
Similarly to Th. \ref{th:synth}.
\end{proof}

\subsection{\SL distance}

\begin{theorem}
The distance $d(q) = d(q_0, q)$ is real-analytic on $\intt \A_2$ and continuous on $M$.
\end{theorem}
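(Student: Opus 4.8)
The plan is to mirror the proof architecture that was established for the first problem, since the author has announced that all proofs in Section~\ref{sec:P2} are ``completely similar or more simple'' than their counterparts in Section~\ref{sec:P1}. Concretely, the statement to be proved is that $d(q)=d(q_0,q)$ is real-analytic on $\intt\A_2$ and continuous on all of $M$. The real-analyticity part follows exactly as item~(1) of Theorem~\ref{th:dist}: by the preceding proposition, the exponential mapping $\map{\Exp}{N}{\intt\B_2}$ is a real-analytic diffeomorphism, so on $\intt\A_2=\intt\B_2$ the cut time equals $\tt(\lam)$ (this was already recorded as a corollary), every interior point is reached by a unique optimal normal extremal, and $d$ is obtained by composing the analytic inverse $\Exp^{-1}$ with the analytic length functional $t_1\sqrt{u_1^2-u_2^2}$ evaluated in arclength parametrization, i.e.\ $d(q)=t_1$ where $(\lam,t_1)=\Exp^{-1}(q)$; hence $d$ is real-analytic on $\intt\A_2$.

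For continuity on $M$ I would split into the trivial part and the boundary part. Outside $\cl(\A_2)$ the function $d$ is identically $0$ by definition (empty reachable set), so it is locally constant there; and by upper semicontinuity of $d$ on $\A_2$ (Proposition~8.1 of \cite{groch2}), together with the explicit description $\A_2=\B_2=\{x\ge|y|,\ z^1(x,y)\le z\le z^2(x,y)\}$, any sequence approaching a point of $M\setminus\cl(\A_2)$ eventually leaves $\A_2$, giving continuity there. The real work is continuity at points $\bq\in\partial\A_2$. Here I would reproduce the argument used in the first problem for continuity of $\restr{d}{\A_1}$: use upper semicontinuity to get $\limsup_{k\to\infty}d(q_k)\le d(\bq)$ for $q_k\to\bq$, and for the reverse inequality argue by contradiction using homogeneity of $d$ under the dilations (the analogue of \eqref{dilat}) to manufacture a sequence of points on a fixed sphere $S(1)$ converging to a boundary point with distance $>1$, contradicting closedness of the sphere. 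However — and this is the crucial structural simplification in the second problem — the boundary $\partial\A_2$ is filled entirely by lightlike abnormal trajectories with controls $u=(1,\pm1)$ and at most one switching, and on such trajectories $\sqrt{u_1^2-u_2^2}\equiv 0$, so $d\equiv 0$ on $\partial\A_2$. This removes the jump discontinuities that plagued $S_3\cup S_4$ in the first problem: there, $\partial\A_1$ contained timelike-then-lightlike abnormal arcs giving $d=y_1>0$ on a boundary stratum, whereas here every boundary point has $d=0$, matching the value of $d$ just outside $\A_2$.

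Thus for $\bq\in\partial\A_2$ I would show directly that $d(\bq)=0$ (the optimal trajectory to $\bq$ is the lightlike abnormal one, by the synthesis theorem of this section, with length $0$), and that $d(q_k)\to 0$ as $\A_2\ni q_k\to\bq$: upper semicontinuity gives $\limsup d(q_k)\le d(\bq)=0$, and $d\ge 0$ always, so $d(q_k)\to 0=d(\bq)$; for $q_k$ approaching $\bq$ from outside $\A_2$ we have $d(q_k)=0$ eventually. Patching the interior analyticity, the exterior vanishing, and the boundary limit $0$ yields continuity on all of $M$. The main obstacle I anticipate is the technical verification that no interior optimal normal extremal ``escapes to the boundary with positive limiting length'' — i.e.\ that the cut locus does not accumulate on $\partial\A_2$ with a length jump — but this is exactly what closedness of the Lorentzian sphere $S(1)$ (proved as in the first problem via the dilation argument and Proposition~\ref{prop:ExpN3}-type properness) rules out; once that ingredient is in place the continuity proof is a routine semicontinuity-plus-homogeneity argument identical in form to the one already carried out for $\restr{d}{\A_1}$.
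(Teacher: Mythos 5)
Your proposal is correct and follows essentially the route the paper intends: the paper's own proof is just the remark that the argument proceeds ``similarly to Theorem \ref{th:dist}''. The one substantive point you add --- that every abnormal (boundary) trajectory in the second problem is lightlike with controls $u=(1,\pm 1)$, so $d\equiv 0$ on $\partial \A_2$ and the first-kind discontinuities of the first problem cannot occur, which is precisely what upgrades continuity from $\intt \A_2$ to all of $M$ --- is exactly the implicit content of that remark, and you supply it correctly.
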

\begin{proof}
Similarly to Th. \ref{th:dist}.
\end{proof}

\subsection{\SL sphere}

\begin{theorem}
The sphere $S$ is a real-analytic manifold diffeomorphic to $\R^2$ parametrized as follows: $S = \{\Exp(\lam, 1) \mid \lam \in C\}$.
\end{theorem}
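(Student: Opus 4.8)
The plan is to mirror the argument for the first problem but note that everything simplifies because the future cone of $U_2$ has trivial intersection with $T\Pi$, so there are no abnormal trajectories reaching the interior of $\A_2$, no Maxwell points, and no cut locus inside $\A_2$; the cut time $\tt(\lam)$ is exactly the time at which the normal extremal leaves its chart. First I would recall that by the preceding proposition the exponential map $\map{\Exp}{N}{\intt \B_2}$ is a real-analytic diffeomorphism, and that $\A_2 = \B_2$. Combining this with the optimality theorem (Th.~\ref{th:opt4}) and the corollary identifying $\tcut = \tt$, every point of $\intt\A_2$ is the endpoint of a unique optimal normal extremal of the form $\Exp(\lam, t_1)$ with $(\lam,t_1)=\Exp^{-1}(q_1)$, and conversely each $\lam \in C$ gives a family $\Exp(\lam, t)$ optimal up to $t=\tt(\lam)$.

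Next I would describe the sphere. By definition $S = S(1) = \{q \mid d(q)=1\}$. Using the arclength parametrization, a normal extremal $\Exp(\lam, t)$ satisfies $d(\Exp(\lam,t)) = t$ for $t \in (0, \tt(\lam))$, so the portion of $S$ in $\intt\A_2$ is precisely $\{\Exp(\lam, 1) \mid \lam \in C, \ \tt(\lam) > 1\}$. The key normalization step is to use the dilation symmetry \eq{dilat}: rescaling a covector by $c \mapsto c/\a^2$ scales $\tt$ by $\a$ and scales the endpoint by $\d_\a$, so for \emph{every} $\lam \in C$ there is a unique rescaled covector $\lam' \in C$ with $\tt(\lam') = \a\tt(\lam) > 1$ and $\d_\a$-related endpoint; hence $S \cap \intt\A_2 = \{\Exp(\lam, 1) \mid \lam \in C\}$ with no constraint, and this set is in bijection with $C$. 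I would then identify $C$ with $\R^2$ via the coordinates $(\p_0, c)$ (noting $C = \{H=-1/2,\ h_1<0\}$ is a smooth two-dimensional submanifold of $T^*_{q_0}M$ parametrized globally by $(\p_0,c) \in \R^2$ through $h_1 = -\cosh\p_0$, $h_2 = \sinh\p_0$, $h_3 = c$).

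Then I would check that $\map{\Exp(\cdot,1)}{C}{S}$ is a real-analytic diffeomorphism onto $S$: it is real-analytic by the explicit formulas of Subsec.~\ref{subsec:norm_extr2}; it is injective because $\map{\Exp}{N}{\intt\B_2}$ is injective and distinct $\lam$ with $\Exp(\lam,1)$ equal would force the same point to lie on two optimal normal extremals, contradicting uniqueness in item (1) of the preceding synthesis theorem; it is nondegenerate because $\map{\Exp}{N}{\intt\B_2}$ is a local diffeomorphism and the hypersurface $\{t=1\}$ is transverse to the extremal flow (as $d\circ\Exp(\lam,t) = t$ has nonvanishing $t$-derivative); and it is surjective onto $S$ because every point of $S$ lies in $\intt\A_2$ — here one uses that, in contrast to the first problem, the distance is continuous on all of $M$ (the distance theorem of this section) and the boundary $\partial\A_2$ consists of lightlike abnormal trajectories of zero length, so no point of $\partial\A_2$ has $d=1$; thus $S \subset \intt\A_2$. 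Finally, $S$ being the diffeomorphic image of $C \cong \R^2$ is a real-analytic manifold diffeomorphic to $\R^2$.

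The main obstacle is the surjectivity/closedness-of-$S$-in-$\intt\A_2$ point: one must rule out that a sequence of normal endpoints of length $1$ escapes toward $\partial\A_2$ or to infinity, i.e. that $S$ has no limit points on the boundary of the attainable set. In the first problem this required the delicate properness analysis and the discontinuity of $d$; here the payoff of the trivial cone intersection is that $\partial\A_2$ carries only lightlike trajectories (length $0$), so $d < 1$ near $\partial \A_2$ by continuity of $d$ on $M$, which closes the gap cleanly — but it is precisely this causal structure input that must be invoked rather than a purely formal argument. Everything else is a routine transcription of the first-problem scheme, simplified by the absence of Maxwell strata and interior abnormals, so I would simply state "similarly to the corresponding results of Section~\ref{sec:P1}" for those parts.
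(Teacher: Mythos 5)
Your overall scheme is the right one, and it matches what the paper intends by its one-line proof (``similarly to Th.~\ref{th:sphere}''): since $\map{\Exp}{N}{\intt\B_2}$ is a real-analytic diffeomorphism, $d=t\circ\Exp^{-1}$ on $\intt\A_2$, and $\partial\A_2$ carries only lightlike (zero-length) abnormal trajectories so that $S\subset\intt\A_2$, the sphere is the image under $\Exp$ of the slice $\{t=1\}\cap N$ and is therefore a real-analytic surface. Your identification of the causal input --- the trivial intersection of the future cone with $T\Pi$, hence no positive-length abnormals, no Maxwell strata, and no limit points of $S$ on $\partial\A_2$ --- is exactly the point that makes the first-problem machinery collapse here.

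One step, however, is wrong: the ``normalization'' by dilations. For $c\neq 0$ the extremal $\Exp(\lam,\cdot)$ blows up at $t=\tt(\lam)=K/(l\aen)$, and $\tt(\lam)\to 0$ as $|c|\to\infty$ with $\p_0$ fixed; so $\Exp(\lam,1)$ is simply undefined for covectors with $\tt(\lam)\le 1$, and $S=\{\Exp(\lam,1)\mid\lam\in C,\ \tt(\lam)>1\}$ is a genuinely constrained parametrization. The dilation \eq{dilat} sends $((\p_0,c),t)$ to $((\p_0,c/\a^2),\a t)$ and hence maps $S(1)$ onto $S(\a)$, not $S(1)$ onto itself; choosing $\a$ so that $\tt(\lam')>1$ merely re-indexes the same set of endpoints and does not enlarge the parametrizing set to all of $C$, so the claim ``with no constraint'' does not follow. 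The theorem's formula must be read with the implicit restriction $(\lam,1)\in N$. The conclusion that $S$ is diffeomorphic to $\R^2$ survives, because $\{\lam\in C\mid\tt(\lam)>1\}=\{(\p_0,c)\mid |c|<(K(k)/\aen)^2\}$ is an open strip around $\{c=0\}$ (with $k,\aen$ real-analytic functions of $\p_0$), hence diffeomorphic to $\R^2$; equivalently, $S$ is the level set $\{d=1\}$ of the real-analytic submersion $d=t\circ\Exp^{-1}$ on $\intt\A_2$. With that correction the rest of your argument --- injectivity from injectivity of $\Exp$ on $N$, nondegeneracy from transversality of $\{t=1\}$, and $S\subset\intt\A_2$ from $d\equiv 0$ on $\partial\A_2$ and outside $\A_2$ --- goes through.
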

\begin{proof}
Similarly to Th. \ref{th:sphere}.
\end{proof}

See the plot of two \sL spheres $S(R_1)$, $S(R_2)$ inside the attainable set $\A_2$ in Fig. \ref{fig:sph2}. 

\figout{
\onefiglabelsizen{spheres_2}{Spheres inside the attainable set for the second problem}{fig:sph2}{7}
}

\section{Conclusion}\label{sec:conclude}  
The first problem is fundamentally different from the second one by the following properties of the
optimal synthesis:
\begin{itemize}
\item
some optimal trajectories change causal type,
\item
extremal trajectories have cut points on the Martinet surface $\Pi$,
\item
the optimal synthesis is two-valued on $\Pi$,
\item
the sub-Lorentzian distance is nonsmooth on $\Pi$ and suffers a discontinuity of the first kind at some points of the boundary of the attainable set $\partial \A_1$,
\item
the sub-Lorentzian sphere $S$ is a manifold with boundary.
\end{itemize}

These features are associated with non-trivial
intersection of the attainable set $\A_1$ (the causal future of the initial point $q_0$) and the
Martinet surface
$\Pi$ for the first problem.

The optimal synthesis in the second problem is qualitatively the same as in the \sL problem on the Heisenberg group \cite{sl_heis}.

\addcontentsline{toc}{section}{List of figures}
\listoffigures


\end{document}